    \renewcommand{\phi}{\varphi}
    \renewcommand{\epsilon}{\varepsilon}
	\newcommand				{\eins}			{\mathbbm{1}}   
	\newcommand				{\norm}[1]		{\left\lVert#1\right\rVert}
	\newcommand				{\abs}[1]		{\left\lvert#1\right\rvert}
	\DeclareMathOperator	{\IE}			{\mathbb{E}} 
	\DeclareMathOperator	{\IP}			{\mathbb{P}}
	\DeclareMathOperator	{\IR}			{\mathbb{R}}
\theoremstyle{plain}
\newtheorem{thm}			{Theorem}
\newtheorem{lem}	[thm]	{Lemma}
\newtheorem{prop}	[thm]	{Proposition}
\theoremstyle{definition}
\newtheorem{rem}	[thm]	{Remark}
\begin{document}
 \title{Fluctuations of the magnetization in the Block Potts Model}
\author{Jonas Jalowy}
 \address{Jonas Jalowy, Institute for Mathematical Stochastics, University of M\"unster, Germany}
  \email{jjalowy@wwu.de}
 \author{Matthias L\"owe}
 \address{Matthias L\"owe, Institute for Mathematical Stochastics, University of M\"unster, Germany.}
 \email{maloewe@math.uni-muenster.de}
 \author{Holger Sambale}
 \address{Holger Sambale, Faculty of Mathematics, Bielefeld University, Germany}
  \email{hsambale@math.uni-bielefeld.de}

 \date{\today}
 \subjclass[2020]{Primary 60F05, 60F10, Secondary 82B20}
 \keywords{Potts model, block spin model, central limit theorem, moderate deviations}
 \thanks{Research of the first and second author was supported by the German Research Foundation (DFG) through the Excellence Strategy EXC 2044-390685587, Mathematics M\"unster: \emph{Dynamics-Geometry-Structure}.
 Research of the third author was funded by the Deutsche Forschungsgemeinschaft via the CRC 1283 \emph{Taming uncertainty and profiting from randomness and low regularity in analysis, stochastics and their applications}.}
 
 \begin{abstract}
In this note we study the block spin mean-field Potts model, in which the spins are divided into $s$ blocks and can take $q\ge 2$ different values (colors). Each block is allowed to contain a different proportion of vertices and behaves itself like a mean-field Ising/Potts model which also interacts with other blocks according to different temperatures. Of particular interest is the behavior of the magnetization, which counts the number of colors appearing in the distinct blocks. We prove central limit theorems for the magnetization in the generalized high temperature regime and provide a moderate deviation principle for its fluctuations on lower scalings. More precisely, the magnetization concentrates around the uniform vector of all colors with an explicit, but singular, Gaussian distribution. In order to remove the singular component, we will also consider a rotated magnetization, which enables us to compare our results to various related models.
 \end{abstract}
 
\maketitle

\section{Introduction}
In the early 2000s a statistical mechanics approach to social choice in several groups of interacting
agents lead to the development of so called block spin models \cite{Contucci_et_al08}. These block spin models were investigated in a number of papers, from both a static and
a dynamic point of view, see  \cite{gallo_contucci_CW}, \cite{gallo_barra_contucci}, \cite{fedele_contucci}, \cite{collet_CW}, \cite{LS18}, \cite{LSV20}, \cite{KLSS20}, \cite{KT20a}, \cite{KT20b}. Statistical questions in block spins models were studied in \cite{BRS19} and \cite{LS20}. 
All the above papers, however, discuss the
situation where each spin only takes two values, hence generalized mean-field Ising models or Curie-Weiss models. 

In \cite{Prepreprint} and \cite{Liu} for the first time block spin models 
with three or more possible spin values were considered. They are natural extensions of mean-field Potts models as considered as a generalization of the Curie-Weiss model in \cite{KeSchon89} and \cite{EW90}. 
In \cite{Liu} the author discusses the situation of a block spin 
Potts model with two blocks or groups and proves limit theorems for
this situation. In \cite{Prepreprint} on the other hand, a setting with several groups is analyzed, however, these have to have an 
equal size. In this case the authors are able to show a large
deviation principle, a phase transition, they are able to identify 
the limit points and prove logarithmic Sobolev inequalities. 
In particular, the authors in \cite{Prepreprint} identify a parameter
regime in the model that corresponds to the high temperature regime
in usual mean field spin models. In this regime the matrix-valued 
order parameter of the model has a unique limit point. 

The main goal of the present note is to prove a Central Limit Theorem (CLT, for short) for this order parameter (the block magnetization)
in the high temperature regime. Notice that Theorem 2 in \cite{Liu} studies a 
similar question. However, there only 
two blocks are allowed (which makes the 
situation technically considerably easier) and 
little is known about the parameter regime 
where the CLT holds. In the present paper we will study an arbitrary (finite) number of blocks of arbitrary block sizes with more general interactions possible. 

To be more specific, to present a simplified model and to fix some notation, partition $\{1, \ldots, N\}, N \in \mathbb{N}$ into $s$ blocks $S_k$. Let $\beta>0$ be the intra-block-interactivity within one of the blocks (for $s=1$, the parameter $\beta$ corresponds to the inverse temperature) and $\alpha\in (0, \beta)$ be the inter-block-interactivity between different blocks. Then, for any $q \in \mathbb{N}\setminus\{1\}$ (which can be thought of as the number of colors) and $\omega \in \{1, \ldots, q\}^N$, we consider the Hamiltonian
\begin{align}\label{eq:Hamiltonian}
H_N(\omega) := -\frac{\beta}{2N} \sum_{i \sim j} \mathbf{1}_{\omega_i = \omega_j} - \frac{\alpha}{2N} \sum_{i \nsim j} \mathbf{1}_{\omega_i = \omega_j},
\end{align}
where $i \sim j$ means that $i$ and $j$ belong to the same block $S_k$ and $i \nsim j$ means that they belong to different blocks. Here, for $i=j$ we have $i \sim i$, and for $i \ne j$ we count \emph{both} pairs $(i,j)$ and $(j,i)$. This gives rise to the Gibbs measure
\[
\mu_N(\omega) := \frac{\exp(-H_N(\omega))}{Z_{N}},
\]
where $Z_{N}$ is the partition function
\[
Z_N := \sum_{\omega'} \exp(-H_N(\omega')).
\]

This definition agrees with the one given in \cite{Prepreprint}, where the case of $q \ge 3$ was considered. Here we also include $q=2$, in which case we obtain a version of the Ising block model, cf.\ e.\,g.\ \cite{LS18} or \cite{KLSS20}. Note that our $\beta$ and $\alpha$ correspond to $2\beta$ and $2\alpha$ in \cite{LS18}, and similar remarks hold for \cite{KLSS20}. This is a consequence of using indicator functions rather than products $\omega_i\omega_j$ of spins $\omega_i, \omega_j \in \{\pm1\}$.

In the sequel it will turn out that our quantities of interest can be most handily 
described in term of the Kronecker product of matrices, denoted by $\otimes$ in the rest
of the paper. We define the magnetization of the block spin Potts model as the vector $m$ whose components are given by
\begin{align}\label{eq:MagnetizationDef}
m_{k,c}:=(e_k\otimes e_c)^Tm:=\frac 1 {\abs{S_k}}\sum_{i\in S_k}\eins_c(\omega_i),
\end{align}
where $k \in \{1, \ldots, s\}$, $c \in \{1, \ldots, q\}$ and $e_k \in \mathbb{R}^s$, $e_c \in \mathbb{R}^q$ are the respective standard unit vectors. Note that $m$ corresponds to the matrix $M_N$ in \cite{Prepreprint}, written as a vector, and under slight abuse of notation, we will still denote its components in a double index notation.
Denote by $I_d$ the identity matrix, by $\mathbf 1_{d\times e}$ the $d\times e$ matrix consisting only of ones and $\mathbf 1_d=\mathbf 1_{d\times 1}$. Define the diagonal matrix consisting of the block sizes $S= \operatorname{diag}( (\abs{S_k})_{k=1,\dots,s})$.

In the most general case we will treat in this note, the $s\times s$ block interaction matrix $A$ is assumed to be symmetric, positive definite and having only positive entries $A_{j,k}>0$. For instance, a sufficient condition for $A$ is if $A^{-1}$ is a so called Stieltjes matrix having negative off-diagonal entries. A more specific example of consideration will be the structured matrix $A_{\alpha,\beta}=\alpha\mathbf 1_{s\times s}+(\beta-\alpha) I_s$, which has $\beta$ on the diagonal and $\alpha$ elsewhere, corresponding to the Hamiltonian \eqref{eq:Hamiltonian}. We will denote by
\begin{align*}
\mathcal S= S\otimes I_q,\quad \mathcal A=A\otimes I_q
\end{align*}
the corresponding block structured matrices of the block sizes and interaction matrix, respectively.

Next let $\gamma_k=\lim_N\abs{S_k}/N$ and $\Gamma=\lim_{N\to\infty}\mathcal S /N=\operatorname{diag}(\gamma)\otimes I_q$.
Then the Hamiltonian can be rewritten as
\begin{align*}
H_N(\omega)=-\frac 1 {2N} m^T  \mathcal S\mathcal A \mathcal Sm
\end{align*}
Finally, let us recall the critical inverse temperature of the $q$-color block Potts model is given by $\zeta_q := 2 \frac{q-1}{q-2} \log(q-1) $. For $q=2$, this has to be read as $\zeta_2 = 2 = \lim_{q \downarrow 2} \zeta_q$ (see \cite{KeSchon89}, \cite{EW90}).
Our main result in the present paper 
is a CLT for the matrix of block magnetizations. If the block sizes
are asymptotically equal, this CLT reads as follows:  

\begin{thm}\label{thm:CLT}
Consider $A=A_{\alpha,\beta}$ in the \emph{high temperature case}, i.e. $$\lVert\sqrt\Gamma\mathcal A\sqrt\Gamma\rVert_2=\lim_{N\to\infty}\lVert\sqrt SA\sqrt S/N\rVert_2<\zeta_q,$$ of asymptotically equal block sizes, i.e. $\Gamma=I_{sq}/s$. 
Then, the normalized magnetization satisfies the following Central Limit Theorem
\begin{align*}
\sqrt{\mathcal S}(m-\tfrac 1 q\mathbf 1 _{sq})\Rightarrow W,
\end{align*} 
where $W\sim\mathcal N (0,\Sigma)$ is an $sq$-dimensional centered Gaussian random variable with \emph{singular} covariance matrix
\begin{align*}
 \Sigma=s\mathcal A^{-1}\left(\left(I_{sq}+\frac{\mathcal A} {s}\big( I_s\otimes(\tfrac 1{q^2}\mathbf 1_{q\times q}-\tfrac 1 q I_q)\big)\right)^{-1}-I_{sq}\right).
\end{align*}
\end{thm}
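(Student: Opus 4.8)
The plan is to compute the moment generating function of $V_N:=\sqrt{\mathcal S}\,(m-\tfrac1q\mathbf 1_{sq})$ directly, via a Hubbard--Stratonovich linearisation of the quadratic Hamiltonian, and to verify that it converges to $\exp(\tfrac12 t^T\Sigma t)$. Since $\mathcal A=A\otimes I_q$ is symmetric positive definite, so is $\mathcal A^{-1}$, and for every $t\in\IR^{sq}$ we may write, with a constant $c_N$ depending only on $N,s,q$ and $\det\mathcal A$,
\begin{align*}
\IE_{\mu_N}\big[e^{t^TV_N}\big]&=\frac1{Z_N}\sum_\omega\exp\Big(\tfrac1{2N}\,m^T\mathcal S\mathcal A\mathcal Sm+t^TV_N\Big)\\
&=\frac{c_N}{Z_N}\int_{\IR^{sq}}\sum_\omega\exp\Big(-\tfrac N2\,x^T\mathcal A^{-1}x+x^T\mathcal Sm+t^TV_N\Big)\,dx .
\end{align*}
The point is that the spin sum now factorises over sites: with $u_{k,c}:=x_{k,c}+t_{k,c}/\sqrt{|S_k|}$ one has $x^T\mathcal Sm+t^T\sqrt{\mathcal S}m=\sum_k\sum_{i\in S_k}u_{k,\omega_i}$ while $-t^T\sqrt{\mathcal S}\big(\tfrac1q\mathbf 1_{sq}\big)=-\tfrac1q\sum_k\sqrt{|S_k|}\,\mathbf 1_q^Tt_k$ is deterministic, so that $\sum_\omega e^{x^T\mathcal Sm+t^TV_N}=e^{-\frac1q\sum_k\sqrt{|S_k|}\mathbf 1_q^Tt_k}\prod_k\big(\sum_{c=1}^q e^{u_{k,c}}\big)^{|S_k|}$. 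Hence $Z_N$ (the case $t=0$) and the numerator become $sq$-dimensional integrals with integrand $\exp\big(NF_N(x)+(t\text{-dependent terms})\big)$, where $F_N(x)=-\tfrac12 x^T\mathcal A^{-1}x+\sum_k\tfrac{|S_k|}N\,g(x_k)$ and $g(w):=\log\sum_{c=1}^q e^{w_c}$.

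\textbf{Localisation.} Next I would perform a Laplace-type analysis of the quotient of these two integrals near the global maximiser of the limiting functional $F(x)=-\tfrac12 x^T\mathcal A^{-1}x+\tfrac1s\sum_k g(x_k)$. The colour symmetry singles out the point $x^\ast=\tfrac1{qN}\mathcal A\mathcal S\mathbf 1_{sq}$, whose coordinates are constant over colours within each block and which solves the exact pre-limit stationarity equation $N\mathcal A^{-1}x^\ast=\tfrac1q\mathcal S\mathbf 1_{sq}$; it corresponds to the uniform magnetisation $m^\ast=\tfrac1q\mathbf 1_{sq}$. The high temperature hypothesis $\lVert\sqrt\Gamma\mathcal A\sqrt\Gamma\rVert_2<\zeta_q$ enters twice here: by the analysis in \cite{Prepreprint} it is exactly the regime in which $x^\ast$ is the \emph{global} maximiser of $F$ (which legitimises the Laplace method), and, since $\zeta_q\le q$, it also forces the Hessian $P:=\tfrac1q I_q-\tfrac1{q^2}\mathbf 1_{q\times q}$ of $g$ at the constant vector $x_k^\ast\mathbf 1_q$ to be small enough that $B:=\mathcal A^{-1}-\tfrac1s Q$, with $Q:=I_s\otimes P$, is positive definite --- indeed $\mathcal A Q=A\otimes P$ has top eigenvalue $\lambda_{\max}(A)/q<s\zeta_q/q\le s$. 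Substituting $x=x^\ast+y/\sqrt N$, the $O(\sqrt N)$ contributions in $y$ cancel by stationarity, the deterministic $t$-shift cancels against the first-order term $\tfrac1q\sum_k\sqrt{|S_k|}\,\mathbf 1_q^Tt_k$ coming from expanding $g$, and --- using $g(a\mathbf 1_q+h)=a+g(h)$, a second-order Taylor expansion of $g$ at the origin, and $\mathcal S/N\to\Gamma=I_{sq}/s$ --- the integrand, after dividing by the Jacobian and the common constant, converges to the Gaussian weight
\begin{align*}
\exp\Big(-\tfrac12\,y^TBy+\tfrac1{\sqrt s}\,y^TQt+\tfrac12\,t^TQt\Big).
\end{align*}
The cubic and higher remainder of $g$ is dominated by $C\epsilon(1+|y|^2)$ on $\{|y|\le\epsilon\sqrt N\}$ (uniform bounds on the third derivatives of $g$), whereas on the complement $F$ stays strictly below $F(x^\ast)$; the standard split-and-estimate argument then lets us pass to the limit inside the quotient.

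\textbf{Identifying the covariance and concluding.} Evaluating the two Gaussian integrals in $y$ (the $t$-independent prefactors, $c_N$ and $\det\mathcal A$ all cancel in the quotient) gives
\begin{align*}
\lim_{N\to\infty}\IE_{\mu_N}\big[e^{t^TV_N}\big]&=\exp\Big(\tfrac1{2s}\,t^TQB^{-1}Qt+\tfrac12\,t^TQt\Big)\\
&=\exp\Big(\tfrac12\,t^T\big(Q+\tfrac1s QB^{-1}Q\big)t\Big).
\end{align*}
To recognise the limiting covariance, write $B=\mathcal A^{-1}\big(I-\tfrac1s\mathcal A Q\big)$ and use the push-through identity $Q\big(I-\tfrac1s\mathcal A Q\big)^{-1}=\big(I-\tfrac1s Q\mathcal A\big)^{-1}Q$ to get
\begin{align*}
Q+\tfrac1s QB^{-1}Q=\big(I-\tfrac1s Q\mathcal A\big)^{-1}Q=s\,\mathcal A^{-1}\Big(\big(I-\tfrac1s\mathcal A Q\big)^{-1}-I\Big).
\end{align*}
Since $\tfrac1s\mathcal A Q=\tfrac{\mathcal A}s\big(I_s\otimes(\tfrac1q I_q-\tfrac1{q^2}\mathbf 1_{q\times q})\big)=-\tfrac{\mathcal A}s\big(I_s\otimes(\tfrac1{q^2}\mathbf 1_{q\times q}-\tfrac1q I_q)\big)$, this is exactly $\Sigma$ as in the statement. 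Because $P\mathbf 1_q=0$, the matrix $\Sigma$ annihilates every $e_k\otimes\mathbf 1_q$, matching the fact that $V_N$ lies in the subspace $\{v:\mathbf 1_q^Tv_k=0\text{ for all }k\}$ and explaining why $\Sigma$ is singular. Finally, the moment generating function of each $V_N$ is finite on all of $\IR^{sq}$ and, by the above, converges on a neighbourhood of the origin to $\exp(\tfrac12 t^T\Sigma t)$, the moment generating function of $\mathcal N(0,\Sigma)$; the continuity theorem for moment generating functions (equivalently, the same computation with $it$ in place of $t$ together with Lévy's continuity theorem) then gives $\sqrt{\mathcal S}\,(m-\tfrac1q\mathbf 1_{sq})\Rightarrow W\sim\mathcal N(0,\Sigma)$, which is the claim.

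\textbf{Main obstacle.} The only genuinely delicate point is the localisation: one must know that $x^\ast$ is the \emph{global} maximiser of $F$, not merely a critical point or a local maximum. This is precisely the non-trivial content of the high temperature regime, established in \cite{Prepreprint}; everything else reduces to a controlled Gaussian computation together with some bookkeeping of Kronecker products.
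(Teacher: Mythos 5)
Your argument is essentially the same Hubbard--Stratonovich plus Laplace strategy that the paper uses: linearise the quadratic Hamiltonian with a Gaussian integral, identify the uniform magnetisation as the unique maximiser of the limiting functional in the high temperature regime (citing \cite{Prepreprint}), and read off the limiting covariance from a second-order Taylor expansion. The one presentational difference is that the paper packages the Hubbard--Stratonovich step as a convolution of the law of $\sqrt{\mathcal S}(m-v)$ with an independent Gaussian $Z$ (Lemma \ref{lem:phi}), computes the MGF of $Z+\sqrt{\mathcal S}(m-v)$ from the resulting explicit density, and then divides out the known MGF of $Z$, whereas you write $\IE_{\mu_N}[e^{t^TV_N}]$ directly as a ratio of two HS-transformed integrals; the two computations are equivalent, with your denominator playing the role of the paper's normalising constant $c_N$ (fixed by $t=0$). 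Your intermediate reduction to $Q + s^{-1}QB^{-1}Q = s\mathcal A^{-1}((I-s^{-1}\mathcal A Q)^{-1}-I)$ via the push-through identity is correct and matches \eqref{eq:Hessianexplicit} after the paper's change of sign in $Q$, and your positive-definiteness check $\lambda_{\max}(A)/q < s\zeta_q/q \le s$ is the content of the paper's Lemma \ref{lem:posdef}. One small point that the paper is more careful about: for the tail estimate you invoke "$F$ stays strictly below $F(x^*)$", but one also needs the at-least-quadratic decay of $F$ at infinity (the paper's \eqref{eq:defR}) to dominate the linear $t$-term in the exponent after the change of variables and to guarantee integrability; this is standard but should be stated.
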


For non-homogeneous block sizes, we can prove a CLT under a more restrictive condition for the "inverse temperature" of a more general interaction matrix $A$.

\begin{thm}\label{thm:CLTgeneral}
If $\gamma_k>0$ for all $k=1,\dots,s$ and $\lVert\sqrt \Gamma \mathcal A\sqrt \Gamma\rVert_2<4\frac{q-1}q$, then the normalized magnetization satisfies the following Central Limit Theorem
\begin{align*}
\sqrt{\mathcal S}(m-\tfrac 1 q\mathbf 1 _{sq})\Rightarrow W,
\end{align*} 
where $W\sim\mathcal N (0, \Sigma  )$ is an $sq$-dimensional centered Gaussian random variable with \emph{singular} covariance matrix
\begin{align*}
\Sigma&=\big(\sqrt\Gamma\mathcal A\sqrt\Gamma\big)^{-1}\left(\Big(I_{sq}+\sqrt\Gamma\mathcal A\sqrt\Gamma\big( I_s\otimes(\tfrac 1{q^2}\mathbf 1_{q\times q}-\tfrac 1 q I_q)\big)\Big)^{-1}-I_{sq}\right)
\end{align*}
\end{thm}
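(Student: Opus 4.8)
The natural route is the Hubbard--Stratonovich (exact Gaussian) transform, applied in the Kronecker-product setting. Writing the Hamiltonian as $H_N(\omega) = -\frac{1}{2N} m^T \mathcal S \mathcal A \mathcal S m$, I would introduce an auxiliary centered Gaussian vector $Y$ on $\mathbb{R}^{sq}$ with covariance proportional to $(\mathcal S \mathcal A \mathcal S / N)^{-1}$ — using positive definiteness of $A$ (hence of $\mathcal A$, and of $\mathcal S$ since all $\gamma_k>0$) — so that, conditionally on $\omega$, the quadratic form in $m$ is linearized. Integrating out $\omega$ then factorizes over blocks and over sites within a block, and the joint law of $\sqrt{\mathcal S}(m - \frac1q \mathbf 1_{sq})$ and the (rescaled) field $X := \sqrt{\mathcal S}\, Y$ becomes, up to normalization, $\exp(-N G_N(\cdot))$ times a Gaussian, where $G_N$ converges to a limiting function $G$ whose unique minimizer is $0$ precisely in the stated temperature regime. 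The key algebraic input here is the single-site moment generating function of a uniform color variable tilted by a linear field: its Hessian at $0$ is exactly $\frac1q I_q - \frac1{q^2}\mathbf 1_{q\times q}$, which is where the block $I_s \otimes(\frac1{q^2}\mathbf 1_{q\times q} - \frac1q I_q)$ in $\Sigma$ originates.

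The core steps, in order: (i) establish that the normalized partition function and the tilted expectations are controlled by $G_N \to G$, with $G(x) = \frac12 x^T (\sqrt\Gamma \mathcal A \sqrt\Gamma)^{-1} x - \sum_k \gamma_k \log \phi_q(\text{field}_k)$ where $\phi_q$ is the single-site normalization; (ii) compute $\nabla^2 G(0)$ and show it is positive definite iff $\lVert \sqrt\Gamma \mathcal A \sqrt\Gamma\rVert_2 < 4\frac{q-1}{q}$ — the eigenvalue $\frac{q-1}{q}$ — wait, more precisely, the relevant single-site covariance $\Lambda_q := \frac1q I_q - \frac1{q^2}\mathbf 1_{q\times q}$ has largest eigenvalue $1/q$, and positivity of $I_{sq} - \sqrt\Gamma\mathcal A\sqrt\Gamma\, \Lambda_q$ near the subspace on which it acts is what the condition $\lVert\sqrt\Gamma\mathcal A\sqrt\Gamma\rVert_2 < 4(q-1)/q$ — actually one should be careful: $\Lambda_q$ is itself singular (it annihilates $\mathbf 1_q$), so $\nabla^2 G(0)$ degenerates on the corresponding subspace and this is exactly the source of singularity in $\Sigma$; (iii) a standard Laplace/Gaussian-approximation argument (uniform integrability via the high-temperature bound, plus a Taylor expansion of $G_N$ to second order around $0$ and a quantitative remainder estimate) then yields that $X$ converges to a centered Gaussian with covariance $(\nabla^2 G(0))^{+}$ restricted appropriately; (iv) finally, undo the Hubbard--Stratonovich coupling: conditionally, $\sqrt{\mathcal S}(m - \frac1q\mathbf 1)$ given $X$ is asymptotically Gaussian with an explicit covariance (the single-site Hessian term $I_s \otimes \Lambda_q$, suitably rescaled), so the magnetization fluctuation equals a linear image of $X$ plus an independent Gaussian, and assembling the two pieces gives $\Sigma = (\sqrt\Gamma\mathcal A\sqrt\Gamma)^{-1}\big((I_{sq} + \sqrt\Gamma\mathcal A\sqrt\Gamma (I_s\otimes(\frac1{q^2}\mathbf 1_{q\times q} - \frac1q I_q)))^{-1} - I_{sq}\big)$ after a Woodbury-type rearrangement.

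I would handle the matrix algebra throughout by exploiting that everything is built from Kronecker products with the $q\times q$ factors being simultaneously diagonalizable (all are polynomials in $\mathbf 1_{q\times q}$, hence share the eigenvector $\mathbf 1_q$ and its orthocomplement), reducing the $sq$-dimensional identities to two cases: the "longitudinal" part (eigenvector $\mathbf 1_q$ in the color factor) on which $\Lambda_q$ vanishes, forcing the $\mathbf 1_q$-direction of $\Sigma$ to be degenerate, and the "transverse" part on which $\Lambda_q$ acts as $\frac1q I$, where the formula reduces to an $s$-dimensional statement about $\sqrt\Gamma A \sqrt\Gamma$. The degeneracy of $\Lambda_q$ must be tracked carefully so that the sum $I_{sq} + \sqrt\Gamma\mathcal A\sqrt\Gamma(I_s\otimes(-\Lambda_q))$ is genuinely invertible even though $-\Lambda_q$ is not; on the transverse subspace this is where the condition $\lVert\sqrt\Gamma\mathcal A\sqrt\Gamma\rVert_2 < 4(q-1)/q$ does its work, although one checks the actual requirement is the milder $\lVert\sqrt\Gamma\mathcal A\sqrt\Gamma\rVert_2 < q$ for invertibility and the stated bound is what is needed for the stronger non-degeneracy/convergence control.

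The main obstacle I anticipate is step (ii)–(iv): controlling the Laplace-type asymptotics in the presence of the genuine degeneracy of the limiting Hessian. One cannot simply invert $\nabla^2 G(0)$; instead one must split $\mathbb{R}^{sq}$ into the kernel direction and its complement, show the fluctuations in the kernel direction are $O(1)$ with an explicit Gaussian limit coming purely from the single-site term (the color-counting constraint $\sum_c m_{k,c} = 1$ makes the $\mathbf 1_q$-component of $\sqrt{\mathcal S}(m-\frac1q\mathbf 1)$ deterministic up to lower order after rescaling — in fact it reflects $\sum_c m_{k,c}=1$ exactly, contributing the singular direction), and then patch. The uniform tightness needed to upgrade convergence of Laplace transforms to weak convergence, under only the stated (not optimal) temperature bound, should follow from a sub-Gaussian concentration estimate for $m$ analogous to those available for mean-field Potts models, but verifying it cleanly for non-equal block sizes with a general positive-definite $A$ is the technical crux.
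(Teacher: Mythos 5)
Your high-level route (Hubbard--Stratonovich, Laplace method, then extracting the magnetization's covariance) is the same as the paper's, but there is a genuine misconception at the center of your step (ii)--(iii) that would derail the argument. You claim that $\nabla^2 G(0)$ degenerates on the longitudinal ($\mathbf 1_q$) direction and that this is "the source of singularity in $\Sigma$." This is false: after convolving with the auxiliary Gaussian the exponent $\phi$ (the paper's analogue of your $G$, cf.\ Lemma \ref{lem:phi}) has Hessian
\[
H_\phi(\xi^*)=A\otimes I_q-\tfrac{1}{qN}ASA\otimes\big(I_q-\tfrac 1q\mathbf 1_{q\times q}\big),
\]
and on the $\mathbf 1_q$-direction the single-site term vanishes while the quadratic term $A\otimes I_q$ does not, so the Hessian is \emph{strictly} positive definite there (Lemma \ref{lem:posdef}, under the milder condition $\lVert\sqrt\Gamma\mathcal A\sqrt\Gamma\rVert_2<q$). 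This is precisely the point of the Hubbard--Stratonovich smoothing: the convolved density is genuinely $sq$-dimensional and the Laplace asymptotics are nondegenerate. The singularity of $\Sigma$ comes only at the very end, from the deconvolution: $\Sigma=\Gamma^{-1/2}\big(H_\phi(\xi^*)^{-1}-\mathcal A^{-1}\big)\Gamma^{-1/2}$, and on the $\mathbf 1_q$-direction $H_\phi(\xi^*)$ restricted there equals $\mathcal A$ restricted there, so the difference of inverses vanishes. If you set up the Laplace argument around a degenerate Hessian as you describe in your last paragraph, you would be solving a harder problem than necessary and quite possibly an ill-posed one (the "density" for $m$ alone lives on a lower-dimensional affine set, not on $\mathbb{R}^{sq}$).

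The second and larger gap is that you never actually establish \emph{uniqueness of the global minimizer} of $G$ (equivalently $\phi$) for general positive-definite $A$ and unequal block sizes; you only assert it. This is the technical heart of Theorem \ref{thm:CLTgeneral} and is nontrivial. For equal block sizes the paper can appeal to convex duality and prior work, but for general $\gamma_k$ it needs: a rearrangement/ordering argument (critical points must have at most two distinct color entries per block, Lemma \ref{lem:ordered+atmost2}), a reduction to a one-parameter-per-block family $\xi(t)$, $t\in[0,1]^s$ (Lemma \ref{lem:XiDarst}), and finally a Banach fixed-point argument on the critical equation $t=h(t)$, whose contraction constant is exactly where the bound $\lVert\sqrt\Gamma\mathcal A\sqrt\Gamma\rVert_2<4\tfrac{q-1}{q}$ appears, because $\sup_x q e^x/(q-1+e^x)^2 = q/(4(q-1))$ (Proposition \ref{prop:Minimizer}). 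You correctly sensed that $<q$ suffices for local positive definiteness while the $4(q-1)/q$ bound does something "stronger," but the stronger thing is not a concentration bound, it is global uniqueness of the critical point. Without this, the Laplace method cannot be localized at $\xi^*$.

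Your step (iv), recovering the fluctuations of $m$ via a conditional CLT given the auxiliary field and then adding the contribution of the field, is a legitimate alternative decomposition and would, if carried out rigorously, reproduce the same $\Sigma$ after the Woodbury-style algebra. The paper instead works directly with moment generating functions: by independence of $Z$ and $m$, the MGFs factor, so one simply divides the limiting MGF of the smoothed quantity by that of $Z$, yielding $\Sigma=\Gamma^{-1/2}(H_\phi(\xi^*)^{-1}-\mathcal A^{-1})\Gamma^{-1/2}$ in one step without having to control conditional CLTs uniformly in the field. Both routes are valid, but the MGF route is cleaner and sidesteps the uniformity issue you flag in your final paragraph. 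The Kronecker-diagonalization observations in your last two paragraphs are correct and essentially match the paper's computation of $\eqref{eq:Hessianexplicit}$ and $\eqref{eq:LoewnerEquivalence}$.
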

 
\begin{rem}
The value $4\frac{q-1}q$ defines a threshold between different cases in \cite[Proof of Proposition 2.1]{EW90} as well, however it is not possible to generalize their monotonicity (in $\beta$) arguments to $s>1$ blocks. Instead, our proof relies on a fixed-point argument, as in \cite{KLSS20,LS18}. This argument does not apply above the given threshold, where additional local minima appear already in the classical Potts model, and hence we cannot reach the critical inverse temperature $\zeta_q$. Note that $4\frac{q-1}q\le\zeta_q$ with equality for $q=2$. Moreover, the same spectral norm also distinguishes between the high and low temperature case in \cite{KLSS20} and \cite{DebMuk}.

For $s=2$, Theorem \ref{thm:CLT} is an explicit version of \cite[Theorem 2]{Liu} and the limiting distribution coincides. However, it is not apparent from \cite{Liu} that the limiting distribution is degenerate, which we shall circumvent in the following.

Moreover, if we replace the condition $\lVert{\sqrt \Gamma \mathcal A\sqrt \Gamma}\rVert_2<4\frac{q-1}q$ by the same abstract condition as in \cite[Equation (16)]{Liu}, i.\,e.\ if $\phi$ has a unique minimizer $\xi^*$ at which the Hessian is strictly positive definite and if $\phi$ grows at least quadratically far away from $\xi^*$, then our CLT continues to hold (since this is the last part of the proof from \eqref{eq:Taylor} and below).
Here, $\phi$ is given in Lemma \ref{lem:phi} below.

\end{rem}

\begin{rem}\label{rem:Rotation}
The components of $m$ are coupled since $m_{k,q}=1-\sum_{c=1}^{q-1} m_{k,c}$. Therefore $\Sigma$ is singular and has exactly $s$ many zero eigenvalues. We shall circumvent this by projecting to the first $q-1$ coordinates of each block after rotating the hyperplane of the support of the limiting Gaussian distribution. We rotate by the matrix $I_s\otimes\tilde R\in SO(sq)$, for which each $s\times s$ diagonal-block is given by the matrix $\tilde R$ which rotates the normal vector of the support to the unit vector, i.e. $\tilde R \frac 1 {\sqrt q}\mathbf 1 _{q}=e_q$. If $\tilde P$ is the $(q-1)\times q$-projection matrix onto the first $(q-1)$ coordinates, define $\mathcal R=I_s\otimes\tilde P \tilde R$. According to the previous discussion, it is reasonable to consider the rotated (as well as projected and rescaled) magnetization given by 
\begin{align}\label{eq:RotMag}
    \hat m =\mathcal  R\sqrt{\mathcal S}(m-\tfrac 1 q\mathbf 1 _{sq})=(\operatorname{diag}(\sqrt{\abs{S_k}}_{k=1,\dots, s})\otimes \tilde P \tilde R) m.
\end{align}
Note that the rotated magnetization $\hat m$ is already centered because of $\mathcal  R \mathbf 1_{sq}=0$.
\end{rem}

\begin{thm}\label{thm:CLTrot}
Under the conditions of Theorem \ref{thm:CLT} or Theorem \ref{thm:CLTgeneral}, respectively, the rotated magnetization $\hat m$ satisfies the Central Limit Theorem
\begin{align*}
    \hat m\Rightarrow\mathcal N (0 , (q-A/s)^{-1}\otimes I_{q-1})
\end{align*}
in the case of asymptotically equal block sizes or, more generally
\begin{align*}
\hat m\Rightarrow\mathcal N \Big(0 , \big(q-\operatorname{diag(\gamma)}^{1/2}A \operatorname{diag(\gamma)}^{1/2}\big)^{-1}\otimes I_{q-1}\Big),
\end{align*}
where the covariance matrix is non-singular and strictly positive definite.
\end{thm}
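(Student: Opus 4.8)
The plan is to deduce Theorem~\ref{thm:CLTrot} directly from Theorems~\ref{thm:CLT} and~\ref{thm:CLTgeneral} by applying the continuous mapping theorem to the linear map $\mathcal R$. Since $\sqrt{\mathcal S}(m-\tfrac1q\mathbf 1_{sq})\Rightarrow W\sim\mathcal N(0,\Sigma)$, we immediately get $\hat m=\mathcal R\sqrt{\mathcal S}(m-\tfrac1q\mathbf 1_{sq})\Rightarrow\mathcal N(0,\mathcal R\Sigma\mathcal R^T)$, and the entire content of the theorem is the identity $\mathcal R\Sigma\mathcal R^T=(q-\sqrt\Gamma' A\sqrt\Gamma')^{-1}\otimes I_{q-1}$ together with its invertibility, where I write $\sqrt\Gamma'=\operatorname{diag}(\gamma)^{1/2}$ for the $s\times s$ block. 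So this is really a linear-algebra computation with Kronecker products, and I would organize it around the structure of $\tilde R$ and $\tilde P$.

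The key observation is that the singular direction of $\Sigma$ inside each block is exactly $\tfrac1{\sqrt q}\mathbf 1_q$, and $\tilde R$ was chosen so that $\tilde R\tfrac1{\sqrt q}\mathbf 1_q=e_q$; hence conjugating the inner $q\times q$ factor $B:=\tfrac1{q^2}\mathbf 1_{q\times q}-\tfrac1q I_q$ by $\tilde R$ block-diagonalizes it as $\tilde R B\tilde R^T=\operatorname{diag}(-\tfrac1q,\dots,-\tfrac1q,0)$: the first $q-1$ coordinates carry $-\tfrac1q I_{q-1}$ and the last coordinate (the $\mathbf 1_q$-direction) carries $0$. I would first record this, then note that $I_s\otimes B$ and $\sqrt\Gamma\mathcal A\sqrt\Gamma=\big(\sqrt\Gamma' A\sqrt\Gamma'\big)\otimes I_q$ commute, so the bracketed expression in $\Sigma$ can be manipulated entirely within the $\otimes I_q$ / $\otimes$-algebra. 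Conjugating everything by $\mathcal R'=I_s\otimes\tilde R$ and using $(\mathcal R')(\sqrt\Gamma\mathcal A\sqrt\Gamma)(\mathcal R')^T=(\sqrt\Gamma' A\sqrt\Gamma')\otimes I_q$ (since $\tilde R$ is orthogonal and commutes through the $\otimes I_q$), the matrix $\mathcal R'\Sigma(\mathcal R')^T$ becomes $(\sqrt\Gamma' A\sqrt\Gamma')^{-1}\otimes I_q$ times $\big((I_{sq}+(\sqrt\Gamma' A\sqrt\Gamma')\otimes(\tilde R B\tilde R^T))^{-1}-I_{sq}\big)$. On the $q$-th coordinate slot the inner factor vanishes, so that slot contributes $0$ — this is the zero eigenvalue that the projection $\tilde P$ discards; on each of the first $q-1$ slots the inner factor is $-\tfrac1q(\sqrt\Gamma' A\sqrt\Gamma')$, and a short computation with $C:=\sqrt\Gamma' A\sqrt\Gamma'$ gives $C^{-1}\big((I_s-\tfrac1q C)^{-1}-I_s\big)=C^{-1}(I_s-\tfrac1q C)^{-1}\big(I_s-(I_s-\tfrac1q C)\big)=\tfrac1q(I_s-\tfrac1q C)^{-1}=(q I_s-C)^{-1}$. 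Applying $I_s\otimes\tilde P$ to drop the last slot then yields $\mathcal R\Sigma\mathcal R^T=(qI_s-C)^{-1}\otimes I_{q-1}$, which is the claimed covariance; in the homogeneous case $C=A/s$ and one recovers $(q-A/s)^{-1}\otimes I_{q-1}$.

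Finally, for the non-singularity claim: $C=\sqrt\Gamma' A\sqrt\Gamma'$ is positive definite (as $A$ is positive definite and $\gamma_k>0$), and under the hypothesis $\|C\|_2=\|\sqrt\Gamma\mathcal A\sqrt\Gamma\|_2<4\tfrac{q-1}q\le q$, so $qI_s-C$ is positive definite and hence invertible; thus $(qI_s-C)^{-1}\otimes I_{q-1}$ is strictly positive definite. (Under the hypotheses of Theorem~\ref{thm:CLT} one has instead $\|C\|_2<\zeta_q$; since $\zeta_q\le q$ fails in general I would note that for $A=A_{\alpha,\beta}$ and $\Gamma=I_{sq}/s$ the relevant bound $\|A/s\|_2<\zeta_q$ still gives $\|A/s\|_2<q$ because $\zeta_q\le q$ holds for all $q\ge2$ — indeed $\zeta_q=2\frac{q-1}{q-2}\log(q-1)\le q$ — so $q-A/s$ is again positive definite.) The main obstacle is purely bookkeeping: keeping the Kronecker factors in the right order and correctly identifying which eigen-slot of $\tilde R B\tilde R^T$ is the zero one, so that the projection $\tilde P$ removes exactly the degenerate direction; once the simultaneous diagonalization is set up, every step is a two-line identity.
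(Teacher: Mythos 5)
Your proposal is correct and follows essentially the same route as the paper: apply the continuous mapping theorem, then reduce to the Kronecker linear-algebra identity $\mathcal R\Sigma\mathcal R^T=(qI_s-C)^{-1}\otimes I_{q-1}$ with $C=\operatorname{diag}(\gamma)^{1/2}A\operatorname{diag}(\gamma)^{1/2}$. The only organizational difference is that the paper verifies this identity by multiplying $\mathcal R\Sigma\mathcal R^T$ against $qI_{s(q-1)}-C\otimes I_{q-1}$ and using the invariance $(I_s\otimes(I_q-\tfrac1q\mathbf1_{q\times q}))\mathcal R^T=\mathcal R^T$, whereas you first conjugate by $I_s\otimes\tilde R$ to exhibit $\tilde R(\tfrac1{q^2}\mathbf 1_{q\times q}-\tfrac1qI_q)\tilde R^T=\operatorname{diag}(-\tfrac1q,\dots,-\tfrac1q,0)$ and then project away the zero slot; the central scalar identity $C^{-1}\bigl((I_s-\tfrac1qC)^{-1}-I_s\bigr)=(qI_s-C)^{-1}$ appears in both. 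Your version is marginally more explicit about why the rank drop is exactly $s$. One small wording slip: you write ``since $\zeta_q\le q$ fails in general'' and then correctly show $\zeta_q\le q$ for all $q\ge2$; the latter is what holds (with equality only at $q=2$, where the strict hypothesis $\lVert C\rVert_2<\zeta_q$ still gives $\lVert C\rVert_2<q$), so the non-singularity conclusion is sound, but the sentence as written contradicts itself.
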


This rotated CLT makes a comparison to the mean-field Ising model, or its block model version, more apparent. We would like to emphasize that in the case of the Curie-Weiss model, we take $q=2,s=1$, and $\hat m= \sqrt{N/2}(m_2-m_1)$ is the usual magnetization of the Ising model up to a factor of $1/\sqrt 2$. The limiting distribution is given by $\mathcal N (0 , (2-\beta)^{-1})$ which coincides with the classical results, see \cite[Theorem V.9.4]{EllisBook}. Note that once again, our $\beta$ corresponds to $2\beta$ in \cite{EllisBook}. Moreover, one may retrieve \cite[Theorem 1.2]{LS18} in the case of $q=s=2$, where our $\beta$ corresponds to $4\beta$ of \cite{LS18}.

The object of study in the LLN/LDP in \cite{KLSS20} differs from the CLT's above by the scaling of magnetization. The following moderate deviation principle (MDP) describes the fluctuation on scalings transitioning between the two.

\begin{thm}\label{thm:MDP}
Assume the conditions of Theorem \ref{thm:CLT} or Theorem \ref{thm:CLTgeneral}. For any $0<\theta<1/2$, the distribution of $\mathcal R\mathcal S^{\theta}(m-\frac 1 q \mathbf 1_{sq})$ under the Gibbs measure $\mu_{N}$ satisfies a moderate deviation principle of speed $N^{1-2\theta}$ with good rate function 
\begin{align*}
\Lambda(t)=\frac 1 2 t^T \left(\left(q\operatorname{diag(\gamma)}^{1-2\theta}-\operatorname{diag(\gamma)}^{1-\theta}A \operatorname{diag(\gamma)}^{1-\theta}\right)\otimes I_{q-1}\right)t.
\end{align*}
\end{thm}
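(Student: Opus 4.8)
\medskip

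\noindent\textbf{Proof sketch of Theorem \ref{thm:MDP}.}
The plan is to combine the Hubbard--Stratonovich representation already employed in the proofs of Theorems \ref{thm:CLT} and \ref{thm:CLTgeneral} with the G\"artner--Ellis theorem. Recall that linearising the quadratic term in $H_N$ by a Gaussian auxiliary field and summing out the conditionally i.i.d.\ spins expresses the Gibbs average of a bounded function of $m$ as a ratio of integrals whose integrand is, up to an $N$-independent prefactor, $\exp(-N\phi_N(\cdot))$, where $\phi_N\to\phi$ and $\phi$ is the function from Lemma \ref{lem:phi}. In either of the two high temperature regimes, $\phi$ has a \emph{unique} minimiser $\xi^{\ast}$ --- here the point corresponding to $\tfrac1q\mathbf 1_{sq}$ --- at which the Hessian is strictly positive definite, and $\phi$ grows at least quadratically away from $\xi^{\ast}$; these are exactly the properties used from \eqref{eq:Taylor} onwards in the CLT proof.

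Set $Y_N:=\mathcal R\mathcal S^{\theta}(m-\tfrac1q\mathbf 1_{sq})\in\IR^{s(q-1)}$. Since the projection $\mathcal R$ removes precisely the $s$ directions in which the limiting covariance of Theorem \ref{thm:CLTrot} degenerates, $Y_N$ takes values in a space on which the limit is non-degenerate, so no pseudo-inverse subtleties arise. By the G\"artner--Ellis theorem it then suffices to show that for every $\lambda\in\IR^{s(q-1)}$ the limit
\[
\Lambda^{\ast}(\lambda):=\lim_{N\to\infty}\frac{1}{N^{1-2\theta}}\log\IE_{\mu_N}\!\left[\exp\!\big(N^{1-2\theta}\langle\lambda,Y_N\rangle\big)\right]
\]
exists, is finite, and equals a strictly convex quadratic form $\tfrac12\lambda^{T}C\lambda$; the MDP then holds with good rate function $\Lambda=(\Lambda^{\ast})^{\ast}$, and it remains to check that $C^{-1}$ coincides with the matrix in the statement. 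Note that $0<\theta<1/2$ is exactly the range in which $N^{1-2\theta}$ is a genuine moderate scale, strictly between the CLT speed $1$ (at $\theta=\tfrac12$) and the large deviation speed $N$ (at $\theta=0$).

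To compute $\Lambda^{\ast}$, I would insert the Hubbard--Stratonovich representation into numerator and denominator and write $\langle\lambda,Y_N\rangle=\langle\mathcal S^{\theta}\mathcal R^{T}\lambda,\,m-\tfrac1q\mathbf 1_{sq}\rangle$. Since $\mathcal S^{\theta}/N^{\theta}\to\Gamma^{\theta}$, the exponential tilt amounts to perturbing the external field in $\phi_N$ by $N^{-\theta}\Gamma^{\theta}\mathcal R^{T}\lambda$ up to $o(N^{-\theta})$. Such a perturbation of size $N^{-\theta}$ moves the minimiser $\xi^{\ast}$ by $O(N^{-\theta})$ and the minimal value of $\phi_N$ by $O(N\cdot N^{-2\theta})=O(N^{1-2\theta})$ --- exactly the speed --- while the $N$-independent prefactor cancels in the ratio. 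Expanding $\phi$ to second order about $\xi^{\ast}$ (the linear term vanishes and the quadratic term is positive definite by Lemma \ref{lem:phi}), the order-$N^{1-2\theta}$ contribution is a quadratic form in $\lambda$; the cubic and higher terms are of order $N^{1-3\theta}=o(N^{1-2\theta})$ and drop out, and replacing $\mathcal S^{\theta}/N^{\theta}$ by its limit only produces a vanishing error. Carrying out the algebra, one finds $C^{-1}=\big(q\operatorname{diag}(\gamma)^{1-2\theta}-\operatorname{diag}(\gamma)^{1-\theta}A\operatorname{diag}(\gamma)^{1-\theta}\big)\otimes I_{q-1}$, which is positive definite under the high temperature hypothesis since it factors as $\operatorname{diag}(\gamma)^{\frac12-\theta}\big(qI_s-\sqrt{\operatorname{diag}(\gamma)}A\sqrt{\operatorname{diag}(\gamma)}\big)\operatorname{diag}(\gamma)^{\frac12-\theta}\otimes I_{q-1}$; Legendre duality then gives $\Lambda(t)=\tfrac12 t^{T}C^{-1}t$, the asserted rate function. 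As a consistency check, at $\theta=\tfrac12$ this $C^{-1}$ is the inverse of the covariance in Theorem \ref{thm:CLTrot}.

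The step I expect to be the main obstacle is making the saddle-point analysis rigorous: one must show that the auxiliary-field integrals in numerator and denominator concentrate at $\xi^{\ast}$ and that the contribution of its complement is negligible \emph{uniformly} in the tilt $\lambda$, so that both are governed by the local quadratic behaviour of $\phi$. This is precisely where the high temperature hypothesis is used, via the at-least-quadratic growth of $\phi$ away from $\xi^{\ast}$ supplied by Lemma \ref{lem:phi}; the same growth also yields the exponential tightness needed to pass from the G\"artner--Ellis bounds to the full moderate deviation principle. The remainder is bookkeeping: performing the expansion in the rotated coordinates and checking that the non-square projection $\mathcal R$ and the replacement of $\mathcal S$ by $N\Gamma$ are handled consistently.
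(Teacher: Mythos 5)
Your proposal identifies the correct overall strategy and matches the paper in its essentials: both use the Hubbard--Stratonovich representation (Lemma \ref{lem:phi}), a Laplace/saddle-point analysis around the unique minimizer $\xi^*$, and the G\"artner--Ellis theorem after rotating by $\mathcal R$. The way you organize the Laplace step, however, is a variant of the paper's, and that variant hides a cancellation you never make explicit.

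The paper works with the Gaussian-smoothed variable $Z+\mathcal S^{\theta}(m-v)$, whose density is $c_N\exp(-N\phi(\cdot))$; it tilts by $N^{1-2\theta}t^T x$ in the auxiliary variable, and because $v=N\mathcal S^{-1}\xi^*$ makes $\nabla\phi(\xi^*)=0$, the linear term in the Taylor expansion \eqref{eq:Taylor} disappears \emph{automatically}. It then factors out the (explicit, quadratic) cumulant of $Z$, rotates, and applies G\"artner--Ellis. You instead propose tilting the magnetization $m$ itself, viewing the tilt $N^{1-2\theta}\langle\mathcal S^\theta\mathcal R^T\lambda,\,m\rangle$ as an external field perturbation in $\phi_N$ of size $N^{-\theta}$ and tracking the resulting shift of the minimizer. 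This is also a legitimate route, but here the naive estimate is that an $O(N^{-\theta})$ field perturbation changes the \emph{value} of the minimum by $O(N^{-\theta})$ (linear response), so $N(\phi^*_\lambda-\phi^*)=O(N^{1-\theta})$, which would swamp the claimed speed $N^{1-2\theta}$. The claimed $O(N^{1-2\theta})$ holds only because the linear (first-order) response vanishes. That vanishing is a genuine extra fact and it comes from the rotation: the field is $h=N^{-\theta}\Gamma^{\theta-1}\mathcal R^T\lambda$ (not $\Gamma^\theta$ as you wrote, though this is a side issue), and since $\tilde P\tilde R\mathbf 1_q=0$, each block of $h$ has vanishing column sum, while at $\xi^*$ the single-spin distribution is uniform, so the linear response $\sum_k\frac{|S_k|}{N}\frac1q\sum_c h_{k,c}$ is exactly zero. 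Likewise the affine term from the centering $-N^{1-2\theta}\langle\lambda,\mathcal R\mathcal S^\theta v\rangle$ vanishes because $\mathcal R\mathbf 1_{sq}=0$. Your sketch mentions that ``the linear term vanishes'' only in the Taylor expansion of $\phi$ at its own minimizer, which is a different and much easier fact; the key first-order cancellation above is what actually saves the order, and it is precisely where the rotation $\mathcal R$ is doing work beyond merely removing the degenerate directions of the limit. Without stating and proving this cancellation the route is incomplete.

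Apart from this, the remaining gaps you flag (uniformity of the saddle-point analysis, replacing $\mathcal S/N$ by $\Gamma$, the bookkeeping for the non-square projection $\mathcal R$) are indeed the places where the paper spends most of the effort (the splitting of the integration domain and the control of $J_N(x)$ around \eqref{eq:CompactDomain}), and your consistency check at $\theta=\tfrac12$ against Theorem \ref{thm:CLTrot} matches the paper's computation at \eqref{eq:rotatedcovariance}.
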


As usual in large deviations theory, the large deviations rate function derived in 
\cite[\S 3]{Prepreprint}
is not obtained when setting $\theta=0$ in Theorem \ref{thm:MDP}, while for 
$\theta=1/2$ one obtains 
as rate function the exponent of the CLT in 
 Theorem \ref{thm:CLT} and Theorem \ref{thm:CLTgeneral}. 
 It is folklore knowledge in large and moderate deviation theory that such a smooth transition from the MDP regime to the regime of the CLT is usually true, even though there are counterexamples (see e.g. \cite{LM12} for a counterexample). 
 
 Let us yet again consider the special case $q=2$, $s=1$ corresponding to the classical Curie-Weiss model. Here, $\Lambda(t)=(2-\beta)t^2/2$, which coincides with \cite[Example 2.1]{MDP}, after noticing that our $\beta$ corresponds to $2\beta$ in \cite{MDP}.

The proofs of the results presented above are based on the Hubbard-Stratonovich transform, or in other words, convoluting the distribution of the rescaled magnetization under the Gibbs measure with a suitable Gaussian measure will lead to a representation where Laplace's Method can be applied. Let us first state how this convolution looks like. In particular, the function $\phi$ (cf. \eqref{eq:phi} below) in the exponent of the resulting density will be of major interest.

\begin{lem}\label{lem:phi}
For any $0\le\theta\le 1/2$ and all $v\in\IR ^{sq}$ we have
\begin{align}\label{eq:distribution}
 &\mu_N\circ \left(\mathcal S^\theta(m-v)\right)^{-1}\ast\mathcal N \left(0,N( \mathcal S^{1-\theta}\mathcal A \mathcal S^{1-\theta})^{-1}\right)(d^{sq}x)\nonumber\\
 &=c_N \exp\left[- N \phi\left(\left(\frac{\mathcal S}{ N}\right)^{1-\theta}\frac x { N^{\theta}}+\frac {\mathcal S}{N}v \right)\right]d^{sq}x,
\end{align}
where 
\begin{align*}
 \phi(\xi)=\tfrac 1{2} \xi^T\mathcal A \xi-\sum_{k=1}^s \frac{\abs{S_k}}{N} \log\left(\sum_{c=1}^q \exp\left[\xi^T\mathcal A(e_k\otimes e_c) \right]\right).
\end{align*}\end{lem}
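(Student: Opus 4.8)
The plan is to verify the identity by a direct computation --- this is just the Hubbard-Stratonovich transform in the present block-Potts setting. Write $Q:=\mathcal S^{1-\theta}\mathcal A\mathcal S^{1-\theta}$. Note first that $Q$ is positive definite: $\mathcal A=A\otimes I_q$ is positive definite since $A$ is, and $\mathcal S^{1-\theta}$ (diagonal with strictly positive entries, the blocks being nonempty) is invertible, so $Q$ is a congruence of $\mathcal A$. Hence $\mathcal N(0,NQ^{-1})$ is non-degenerate and the convolution has a smooth density, which at the point $x$ equals $\sum_\omega\mu_N(\omega)\,g\big(x-\mathcal S^\theta(m(\omega)-v)\big)$ with $g$ the density of $\mathcal N(0,NQ^{-1})$. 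Using $-H_N(\omega)=\frac1{2N}m^T\mathcal S\mathcal A\mathcal Sm$ and that the precision matrix of the Gaussian is $\frac1N Q$, so that up to a multiplicative constant depending only on $N$ (namely $Z_N^{-1}$ times the Gaussian normalization) the density equals $\sum_\omega\exp(E_\omega)$ with
\[
E_\omega=\tfrac1{2N}m^T\mathcal S\mathcal A\mathcal Sm-\tfrac1{2N}\big(x-\mathcal S^\theta(m-v)\big)^TQ\big(x-\mathcal S^\theta(m-v)\big).
\]

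Next I expand the square. Because the powers of the diagonal matrix $\mathcal S$ commute and $\mathcal S^\theta\mathcal S^{1-\theta}=\mathcal S$, and $\mathcal S,\mathcal A$ are symmetric, the contribution of the second term that is quadratic in $m$ is $-\frac1{2N}(m-v)^T\mathcal S\mathcal A\mathcal S(m-v)$, whose $m^T\mathcal S\mathcal A\mathcal Sm$ summand cancels the first term of $E_\omega$ exactly --- this cancellation is the heart of the transform. Introducing
\[
\xi:=\Big(\tfrac{\mathcal S}{N}\Big)^{1-\theta}\tfrac{x}{N^\theta}+\tfrac{\mathcal S}{N}v=\tfrac1N\big(\mathcal S^{1-\theta}x+\mathcal S v\big),
\]
a short rearrangement (again using symmetry and $\mathcal S^\theta\mathcal S^{1-\theta}=\mathcal S$) shows that all pieces of $E_\omega$ not involving $m$ recombine into $-\frac N2\xi^T\mathcal A\xi$, while the pieces linear in $m$ recombine into $m^T\mathcal S\mathcal A\xi=(\mathcal S m)^T\mathcal A\xi$. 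In particular no $x$- or $v$-dependent term survives outside $\xi$, so the overall prefactor will be a genuine constant.

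Finally the sum over configurations factorizes over sites. From $(\mathcal S m)_{k,c}=\sum_{i\in S_k}\eins_c(\omega_i)$ we get $(\mathcal S m)^T\mathcal A\xi=\sum_{k=1}^s\sum_{i\in S_k}(\mathcal A\xi)_{k,\omega_i}$, hence $\exp\big((\mathcal S m)^T\mathcal A\xi\big)=\prod_{k=1}^s\prod_{i\in S_k}\exp\big((\mathcal A\xi)_{k,\omega_i}\big)$, and summing over $\omega\in\{1,\dots,q\}^N$ yields $\prod_{k=1}^s\big(\sum_{c=1}^q\exp((\mathcal A\xi)_{k,c})\big)^{\abs{S_k}}$. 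Pulling this out of the exponential and using $\mathcal A^T=\mathcal A$ to rewrite $(\mathcal A\xi)_{k,c}=\xi^T\mathcal A(e_k\otimes e_c)$ turns the exponent into
\[
-\tfrac N2\xi^T\mathcal A\xi+\sum_{k=1}^s\abs{S_k}\log\Big(\sum_{c=1}^q\exp\big[\xi^T\mathcal A(e_k\otimes e_c)\big]\Big)=-N\phi(\xi),
\]
which is exactly the claimed form; the leftover factor $Z_N^{-1}$ together with the $(2\pi)$- and determinant-factors of $g$ is the constant $c_N$.

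The argument is routine, and the only step that needs genuine care is the second one: bookkeeping the quadratic, linear and constant pieces produced by expanding the square and recombining them correctly by means of $\mathcal S^\theta\mathcal S^{1-\theta}=\mathcal S$ and the symmetry of $\mathcal S$ and $\mathcal A$ --- in particular checking that the resulting argument of $\phi$ is precisely the $\xi$ above for every $\theta\in[0,1/2]$.
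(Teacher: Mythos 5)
Your proposal is correct and follows essentially the same route as the paper's proof: a direct Hubbard--Stratonovich computation, expanding the Gaussian exponent, cancelling the $m^T\mathcal S\mathcal A\mathcal S m$ terms, recombining the remainder into $-\tfrac N2\xi^T\mathcal A\xi+(\mathcal S m)^T\mathcal A\xi$, and factorizing the sum over configurations. The only cosmetic difference is that you absorb the shift $v$ into the change of variable from the outset, while the paper first treats $v=0$ and shifts at the end.
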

Since $\phi$ depends on $N$ in the case of non-equal Block sizes $\abs{S_k}\not\equiv N/s$, we shall write $\phi_N$ whenever this dependency needs to be stressed. Note that this is the analogue of \cite[Lemma 3.2]{EW90} in the case of $\theta=1/2, s=1$ and that the sum of exponential functions corresponds to $2\cosh(\cdot)$ in the Ising model, cf.\ \cite[Proof of Lemma 3.1]{LS18}.

Finding the global minima of $\phi$ will be crucial, hence we shall investigate $\phi$ in the following. In Section \ref{sec:convex} we build a bridge to the convex dual of $\phi$ which is studied in \cite{Prepreprint}.
In order to do so, we would have liked to invoke classic results from the theory of classic duality as given e.g.\ by Eisele and Ellis 
\cite[Appendix C]{EE83}. However, our function of interest fails to meet the assumptions made in this theory. This is repaired by introducing subdifferentials and by
applying the theory of convex duality to submanifolds. Note that a similar result can be found in \cite[Theorem A.1]{CET}, using related methodology. Nevertheless, we chose to include our own arguments for completeness and to keep this paper self-contained. Moreover, we do not only provide a result for minimizer but for all critical points and explicitly state the bijection which the duality is based on.

In Section \ref{sec:Minimizer} we prove Lemma \ref{lem:phi}. Moreover, we study the properties of $\phi$ in detail and, in particular, find its minima. For asymptotically equal block sizes we can make use of results obtained in \cite{Prepreprint}. This is why in this case we obtain results up to the critical temperature, while in the case of general block sizes and general interaction matrix we need to study $\phi$ directly. Finally, in Section \ref{sec:proofs} we will prove the main theorems of this note.

\section{Some facts on convex duality}\label{sec:convex}
Recall that the \emph{convex conjugate} of a function $f \colon \mathbb{R}^d \to \mathbb{R} \cup \{\infty\}$ is defined by
\begin{align*}
f^*(\nu)=\sup_{\xi\in \mathbb{R}^d}\{\xi^T\nu-f(\xi)\} = \sup_{\xi\in \mathrm{dom}(f)}\{\xi^T\nu-f(\xi)\},
\end{align*}
where $\mathrm{dom}(f) := \{\xi\in\mathbb{R}^d \colon f(\xi) < \infty\}$ (in passing, note that we never allow for $f(\xi) = - \infty$). According to \cite[Theorem 12.2]{Ro70}, this transform is an involution $(f^*) ^*=f$ for closed convex functions.

For the ease of presentation, we first show a duality result in a simplified situation, i.\,e.\ for functions which behave particularly well with respect to convex conjugates. That is, assume that $B := \mathrm{int}(\mathrm{dom}(f))$ is a non-empty open convex set and that $f$ is strictly convex and essentially smooth, i.\,e.\ $f$ is differentiable on $B$ and $\lim|\nabla f(x_i)| = \infty$ whenever $x_i$ is a sequence in $B$ converging to a boundary point of $B$. Such a function is also called a convex function of Legendre type. In this case, by \cite[Theorem 26.5]{Ro70}, $f^*$ is a convex function of Legendre type as well, we have
\begin{align}\label{eq:nablainv*}
\nabla f \circ\nabla f^*=Id=\nabla f^*\circ\nabla f,
\end{align}
and by \cite[Theorem 23.5]{Ro70},
\begin{align}\label{eq:argsup*}
\operatorname{argsup}_{\nu}\{\nu^T\xi-f^*(\nu)\}=\nabla f (\xi).
\end{align}
Using these basic relations, we obtain the following lemma:

\begin{lem}\label{lem:convexduality*}
Let $f,g$ be convex functions of Legendre type. Then we have the convex duality
\begin{align}\label{eq:convexduality*}
    \sup_{\xi\in\mathrm{dom}(g)}\{f(\xi)-g(\xi)\}=\sup_{\nu\in\mathrm{dom}(f^*)}\{g^*(\nu)-f^*(\nu)\}.
\end{align}
Moreover, the bijection $\nabla g$, given by \eqref{eq:nablainv*}, is 1:1 between critical points and 1:1 between maximizers of \eqref{eq:convexduality*}, and for any critical point $\xi$ of $f-g$, we have
\[
f(\xi)-g(\xi) = g^*(\nabla g(\xi))-f^*(\nabla g(\xi)).
\]
\end{lem}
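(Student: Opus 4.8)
The plan is to obtain the duality \eqref{eq:convexduality*} from the biconjugation identity $f=f^{**}$ together with a trivial interchange of suprema, and then to read off the statements about critical points and maximizers from the Fenchel--Young equality for Legendre type functions. For the identity itself: since $f$ is closed and convex, $f=f^{**}$ by \cite[Theorem 12.2]{Ro70}, i.e.\ $f(\xi)=\sup_{\nu\in\mathrm{dom}(f^*)}\{\nu^T\xi-f^*(\nu)\}$ for every $\xi$. Substituting this into the left-hand side of \eqref{eq:convexduality*} and swapping the two suprema (which is always permissible for a supremum over a product set) gives
\[
\sup_{\xi\in\mathrm{dom}(g)}\{f(\xi)-g(\xi)\}
=\sup_{\nu\in\mathrm{dom}(f^*)}\Bigl\{\sup_{\xi\in\mathbb{R}^d}\{\nu^T\xi-g(\xi)\}-f^*(\nu)\Bigr\}
=\sup_{\nu\in\mathrm{dom}(f^*)}\{g^*(\nu)-f^*(\nu)\},
\]
where we used that the inner supremum may be extended to all of $\mathbb{R}^d$ since $g\equiv+\infty$ off $\mathrm{dom}(g)$, and that by definition it equals $g^*(\nu)$.

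Next I would record the Fenchel--Young equality: for a function $h$ of Legendre type and $\xi\in B_h:=\mathrm{int}(\mathrm{dom}(h))$ one has $h(\xi)+h^*(\nabla h(\xi))=\xi^T\nabla h(\xi)$, and by \cite[Theorem 26.5]{Ro70} the gradient $\nabla h\colon B_h\to B_{h^*}$ is a bijection with inverse $\nabla h^*$, cf.\ \eqref{eq:nablainv*}. Let $\xi$ be a critical point of $f-g$, by which we mean a point of $B_f\cap B_g$ with $\nabla f(\xi)=\nabla g(\xi)=:\nu$ (essential smoothness of $f$ and $g$ forces any such point into $B_f\cap B_g$). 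Then $\nu=\nabla f(\xi)\in B_{f^*}$ and $\nu=\nabla g(\xi)\in B_{g^*}$, so $\nu\in B_{f^*}\cap B_{g^*}$, where $g^*-f^*$ is differentiable; applying \eqref{eq:nablainv*} to $f$ and to $g$ gives $\nabla f^*(\nu)=\xi=\nabla g^*(\nu)$, i.e.\ $\nu=\nabla g(\xi)$ is a critical point of $g^*-f^*$. Since $(f^*)^*=f$ and $(g^*)^*=g$, the same argument run with the conjugate pairs shows that $\nabla g^*=(\nabla g)^{-1}$ maps critical points of $g^*-f^*$ back to critical points of $f-g$, so $\nabla g$ is the asserted bijection. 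Subtracting the two Fenchel--Young equalities $g^*(\nu)=\xi^T\nu-g(\xi)$ and $f^*(\nu)=\xi^T\nu-f(\xi)$ then yields exactly $g^*(\nabla g(\xi))-f^*(\nabla g(\xi))=f(\xi)-g(\xi)$.

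It remains to upgrade the critical-point statement to one about maximizers. The key point is that any maximizer $\xi^*$ of $f-g$ over $\mathrm{dom}(g)$ is automatically a critical point (and likewise for $g^*-f^*$): granting this, Steps 1 and 2 at once promote the critical-point bijection to a value-preserving bijection between the possibly empty sets of maximizers of \eqref{eq:convexduality*}. To see that maximizers are critical, one uses essential smoothness once more: if $\xi^*$ lay on $\partial B_g$, then for $z\in B_f\cap B_g$ the one-sided directional derivative of $g$ at $\xi^*$ towards $z$ is $-\infty$, while that of $f$ remains bounded below (being finite on $B_f$, with a comparison argument needed in the degenerate case $\xi^*\in\partial B_f$), so $f-g$ would strictly increase along the segment into the interior, contradicting maximality; hence $\xi^*\in B_f\cap B_g$ and $\nabla(f-g)(\xi^*)=0$. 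I expect this boundary/steepness argument to be the only genuinely delicate point of the proof; everything else is the Fenchel--Young equality plus the routine interchange of suprema.
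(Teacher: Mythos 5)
Your proof is correct and, on the whole, parallels the paper's, but it differs in two places worth flagging. For the duality identity \eqref{eq:convexduality*} you give a self-contained derivation via biconjugation $f=f^{**}$ and an interchange of suprema, whereas the paper simply cites Eisele--Ellis \cite[Appendix~C]{EE83}; your version is elementary and makes the lemma freestanding. The critical-point bijection and the value-preservation identity are essentially the same in both proofs (your explicit Fenchel--Young subtraction versus the paper's manipulation via \eqref{eq:argsup*}). More notably, you explicitly isolate the step ``every maximizer is a critical point,'' which is indeed needed to upgrade the critical-point bijection to a bijection between maximizers. The paper never states this: its maximizer argument begins with ``Suppose in addition,'' so it only treats maximizers that are \emph{already assumed} to be critical points, and the closure of that gap is left implicit. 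You have therefore caught a small omission in the paper's exposition. That said, your steepness argument for closing it is only a sketch, as you acknowledge: the assertion that the one-sided directional derivative of $g$ into $B_g$ at a boundary point is $-\infty$ does follow from essential smoothness (it forces $\partial g(\xi^*)=\emptyset$ at $\xi^*\in\partial B_g$, hence the directional derivative into the interior is $-\infty$; cf.\ \cite[Theorems 23.3--23.4]{Ro70}), but the ``comparison argument'' for the degenerate case $\xi^*\in\partial B_f$ is left open. In the paper's actual application (Lemma \ref{convexdualityfg*}), both $f$ and $g$ from \eqref{fandg*} have full domain $\mathbb{R}^{sq}$, so no maximizer can sit on a boundary and the issue evaporates, which is presumably why the authors did not dwell on it.
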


In particular, if the right supremum is attained at a unique maximizer $\nu^*$, then the left supremum is attained at the unique maximizer $\xi^*=\nabla g^*(\nu^* )$.

\begin{proof}
The duality \eqref{eq:convexduality*} can be found in Eisele and Ellis \cite[Appendix C]{EE83}. 

Suppose that $\xi$ is a critical point of $f-g$. In particular,
\begin{align}\label{eq:critpoint*}
\nabla (f (\xi)-g(\xi))=0,
\end{align}
i.\,e.\ $\nabla f$ and $\nabla g$ take the same value on critical points, and hence by \eqref{eq:nablainv*},
\begin{align*}
\nabla f (\nabla g^*(\nabla g(\xi)))=\nabla f(\xi)=\nabla g(\xi).
\end{align*}
Applying $\nabla f^*$ to both sides reveals that $\nabla g(\xi)$ is a critical point of $g^*-f^*$. 
Moreover, it follows from \eqref{eq:critpoint*} and the representation \eqref{eq:argsup*} that
\begin{align}\label{eq:Id1*}
   g^*(\nabla g(\xi))-f^*(\nabla g(\xi))= g^*(\nabla g(\xi))-f^*(\nabla f(\xi))+\xi^T(\nabla f(\xi)-\nabla g(\xi))=f(\xi)-g(\xi).
\end{align}
Conversely, using the above reasoning together with the involution property of the Legendre transform, any critical point $\nu$ of $g^*-f^*$ is also a critical point of $f-g$ and satisfies
\begin{equation}\label{eq:Id2*}
    g^*(\nu)-f^*(\nu) = f(\nabla g^*(\nu)) - g(\nabla g^*(\nu)).
\end{equation}

Suppose in addition that $\xi$ is a maximizer such that $f(\xi)-g(\xi)=\sup \{f-g\}$. Then, by convex duality \eqref{eq:convexduality*} and \eqref{eq:Id1*}, we obtain that $\nabla g(\xi)$ is a maximizer of $g^*-f^*$. Conversely let $\nu$ be a maximizer of $g^*-f^*$, then by \eqref{eq:convexduality*} and \eqref{eq:Id2*} also $\nabla g^*(\nu)$ is a maximizer of $f-g$.
\end{proof}

In the proofs of Theorem \ref{thm:CLT} and Theorem \ref{thm:CLTgeneral}, we need a result of the same type as Lemma \ref{lem:convexduality*} applied to the functions
\begin{equation}\label{fandg*}
f(\xi) := \sum_{k=1}^s\frac{|S_k|}{N}\log\sum_{c=1}^q\exp(\xi_{k,c}),\qquad g(\xi):= \frac 12\xi^T\mathcal A ^{-1}\xi
\end{equation}
such that $\phi(\xi)=g(\mathcal A \xi)-f(\mathcal A \xi)$. Here, and in the sequel, we used the same notation as for the magnetization \eqref{eq:MagnetizationDef}, i.e. we denote the components by $\xi_{k,c}:=(e_k\otimes e_c)^T\xi=\xi_{(k-1)q+c}$.
Let us calculate the convex conjugates $f^*$ and $g^*$. First, note that $g^*(\nu)=\frac 1 {2}\nu^T\mathcal A \nu$, see \cite[Example VI.5.1]{EllisBook}. Therefore, it remains to find
\begin{align}\label{eq:f=sup*}
f^*(\nu)=\sup_{\xi} \{\nu^T\xi  - f(\xi )\}.
\end{align}
Differentiation with respect to $\xi$ shows that the supremum is attained at a critical point satisfying
 \begin{align*}
     \nu_{k,c}-\frac{|S_k|}{N}\frac{\exp( \xi _{k,c})}{\sum_{\tilde c}\exp( \xi _{k,\tilde c}) }=0
 \end{align*}
for all $c=1,\dots,q$ and $k=1,\dots,s$. In particular, critical points can only exist if $\nu_{k,c} > 0$ and moreover, summing over $c=1,\dots,q$ shows that we must have $\sum_{c=1}^q\nu_{k,c}=|S_k|/N$ for all $k=1,\dots,s$. (If these conditions are not satisfied, it is not hard to see that $f^*(\nu) = \infty$.)

Applied to \eqref{eq:f=sup*}, this readily implies $f^*(\nu)=\nu^T\log \nu-H$, where the logarithm is considered componentwise and $\mathcal H = \mathcal H(|S_\bullet|/N) = \sum_{k=1}^s(|S_k|/N)\log(|S_k|/N)$ is the entropy of $(|S_1|/N, \ldots, |S_s|/N)$. Summing up, we have
\begin{equation}\label{convexconjugates*}
f^*(\nu) = \begin{cases} \sum_{c=1}^q \nu^T\log \nu-\mathcal H, & \nu \in C\\ \infty, &\text{else}\end{cases},\qquad g^*(\nu)=\frac 1 {2}\nu^T\mathcal A \nu,
\end{equation}
where
\begin{equation}\label{DasIstC}
C := \{\nu \in \IR^{sq} \colon \nu_{k,c} > 0\ \text{and}\ \sum_{c=1}^q \nu_{k,c} = |S_k|/N\ \textup{for all}\ k\}.
\end{equation}
In particular, note that in this situation, we cannot directly apply Lemma \ref{lem:convexduality*}, since $f$ is not a convex function of Legendre type. In fact, its convex conjugate $f^*$ is not even differentiable (as a function on $\mathbb{R}^{sq}$). Therefore, we need to modify the arguments leading to Lemma \ref{lem:convexduality*}.

To this end, let us recall the notion of subdifferentials. Let $h \colon \mathbb{R}^{sq} \to \mathbb{R} \cup \{ \infty\}$ be a convex function. Then, the subdifferential $\partial h(\nu)$ of $h$ in $\nu \in \mathbb{R}^{sq}$ is the the set of all vectors (subgradients) $\nabla h(\nu) \in \mathbb{R}^{sq}$ such that
\begin{equation}\label{subdiff*}
h(\eta) \ge h(\nu) - \langle \nabla h(\nu), \eta-\nu \rangle
\end{equation}
for all $\eta \in \mathbb{R}^{sq}$ or, equivalently, all $\eta \in \mathrm{dom}(h)$. Labeling the elements of $\partial h(\nu)$ by $\nabla h(\nu)$ is non-standard but convenient for our purposes. If $h$ is differentiable in $\nu$, then $\partial h(\nu)$ just consists of a single element $\nabla h(\nu)$, which is the usual Euclidean gradient of $h$ at $\nu$, cf.\ \cite[Theorem 25.1]{Ro70}. Assuming $h$ to be a closed function, \eqref{eq:argsup*} continues to hold for subgradients, and generalizing \eqref{eq:nablainv*}, we have
\begin{align}\label{nablainvgen*}
\begin{split}
\nabla h(\nu) \in \partial h(\nu) &\Leftrightarrow \nu \in \partial h^*(\nabla h(\nu)),\\
\nabla h^*(\nu) \in \partial h^*(\nu) &\Leftrightarrow \nu \in \partial h(\nabla h^*(\nu))
\end{split}
\end{align}
according to \cite[Theorem 23.5]{Ro70}. In particular, we can always choose suitable elements of the respective subdifferentials such that \eqref{eq:nablainv*} holds (pointwise). For illustration, consider $f$ and $g$ as in \eqref{fandg*} for $q=2$ and $s=1$. In this case,
\begin{align*}
\partial f(\xi) = \Big(\frac{e^{\xi_1}}{e^{\xi_1} + e^{\xi_1}}, \frac{e^{\xi_2}}{e^{\xi_1} + e^{\xi_1}}\Big),\qquad \partial g(\xi) &= (\xi_1/\beta, \xi_2/\beta)\\
\partial f^*(\nu) = \begin{cases} \{ (z, z + \log(\nu_2/\nu_1) \colon z \in \mathbb{R} \} & \nu \in (0,1)^2, \nu_1 + \nu_2 = 1\\ \emptyset &\text{else}\end{cases},\qquad \partial g^*(\nu) &= (\beta \nu_1, \beta \nu_2).
\end{align*}

In Lemma \ref{lem:convexduality*}, the supremum of $g^*-f^*$ is evaluated over $\mathrm{dom}(f^*)$, which in our case equals $C$ as in \eqref{DasIstC}. Therefore, we have to consider functions $h \colon C \to \mathbb{R}$. Set
\begin{equation}\label{DasIstCHut}
\hat{C} := \{\hat{\nu} \in \IR^{(q-1)s} \colon \hat{\nu}_{k,c} > 0\ \text{and}\ \sum_{c=1}^{q-1} \hat{\nu}_{k,c} < |S_k|/N\ \textup{for all}\ k\}.
\end{equation}
Obviously, there is a natural bijection between $C$ and $\hat{C}$: any $\nu \in C$ corresponds to a $\hat{\nu}$ whose $k$-th block is given by $(\nu_{k,1}, \ldots, \nu_{k,q-1})$ (i.\,e.\ the first $q-1$ coordinates of the $k$-th block of $\nu$). In other words, recalling the projection $\tilde P$ from Remark \ref{rem:Rotation}, we have $\hat{\nu}=(I_s\otimes \tilde P)\nu\in \hat{C}$.
Conversely, given $\hat{\nu}$, we get back $\nu$ by adding a $q$-th coordinate to each block by setting $\nu_{k,q} = |S_k|/N-\sum_{c=1}^{q-1}\hat{\nu}_{k,c}$. In particular, any function $h \colon C \to \mathbb{R}$ naturally gives rise to a function $\hat{h} \colon \hat{C} \to \mathbb{R}$ given by $\hat{h}(\hat{\nu}) := h(\nu)$, where $\nu$ is the element of $C$ corresponding to $\hat{\nu}$ as described above. In the same way, any function $\hat{h}$ on $\hat{C}$ naturally corresponds to a function $h$ on $C$. The subdifferentials of $h$ and $\hat{h}$ are related as follows:

\begin{lem}\label{lem:SD*}
Consider $C$ as in \eqref{DasIstC} and $\hat{C}$ as in \eqref{DasIstCHut}, let $h \colon \mathbb{R}^{sq} \to \mathbb{R}$ be a convex function with $C \subset \mathrm{dom}(h)$ and $\hat{h}$ as introduced above. For any vector $\xi \in \mathbb{R}^{sq}$, let $\widetilde{\xi} \in \mathbb{R}^{(q-1)s}$ be the vector whose $k$-th block is given by
\begin{equation}\label{tilde}
    (\xi_{k,c}-\xi_{k,q})_{c=1}^{q-1},
\end{equation}
i.\,e.\ the $q$-th component of each block is subtracted from all the other components.
\begin{enumerate}
    \item $\hat{h}$ is a convex function on $\hat{C}$, and for any $\nu \in C$ and any $\nabla h(\nu) \in \partial h(\nu)$, the vector $\widetilde{\nabla h(\nu)}$ lies in $\partial \hat{h}(\hat{\nu})$.
    \item Assume $\mathrm{dom}(h) = C$, i.\,e.\ $h(\nu) = \infty$ if $\nu \notin C$. If $\hat{\nu} \in \hat{C}$ and $\nabla \hat{h}(\hat{\nu}) \in \partial \hat{h}(\hat{\nu})$, then any vector $x \in \mathbb{R}^{sq}$ such that $\widetilde{x} = \nabla \hat{h}(\hat{\nu})$ lies in $\partial h(\nu)$.
\end{enumerate}
\end{lem}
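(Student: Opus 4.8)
The plan is to recognize the correspondence $\hat\nu\leftrightarrow\nu$ between $\hat C$ and $C$ as the restriction of an injective affine map and to identify the operation $\xi\mapsto\widetilde\xi$ with the transpose of its linear part; once this is in place, the lemma reduces to the elementary fact that convex functions behave well under affine substitutions. Concretely, let $\tilde L\in\IR^{q\times(q-1)}$ be the matrix whose first $q-1$ rows form the identity $I_{q-1}$ and whose last row equals $-\mathbf 1_{q-1}^T$, put $L:=I_s\otimes\tilde L$, and let $b\in\IR^{sq}$ be the vector whose $k$-th block is $(0,\dots,0,|S_k|/N)$. One checks directly that the element $\nu\in C$ attached to $\hat\nu\in\hat C$ is precisely $\nu=L\hat\nu+b$, that $\hat C=\{\hat\nu\colon L\hat\nu+b\in C\}$ (the sum constraint defining $C$ holds automatically, and the $q$ positivity constraints of each block translate exactly into the $q-1$ positivity constraints and the one strict inequality defining $\hat C$), and that $(I_s\otimes\tilde P)L=I_{(q-1)s}$, $(I_s\otimes\tilde P)b=0$, consistently with $\hat\nu=(I_s\otimes\tilde P)\nu$. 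The key point is that, since $\tilde L^T=(I_{q-1}\mid-\mathbf 1_{q-1})$, for every $\xi\in\IR^{sq}$ the vector $L^T\xi=(I_s\otimes\tilde L^T)\xi$ has $k$-th block $(\xi_{k,c}-\xi_{k,q})_{c=1}^{q-1}$, i.e.\ $\widetilde\xi=L^T\xi$ in the notation \eqref{tilde}. Finally, by construction $\hat h(\hat\nu)=h(L\hat\nu+b)$ for $\hat\nu\in\hat C$, and we extend $\hat h$ by $+\infty$ off $\hat C$.

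With this dictionary, the convexity assertion in (1) is immediate: $\hat C$ is convex (it is the preimage of the convex set $C$ under an affine map, and also visibly an intersection of half-spaces), and $\hat\nu\mapsto h(L\hat\nu+b)$ is the composition of the convex function $h$ with an affine map. For the subgradient assertion in (1), fix $\nu\in C$, the corresponding $\hat\nu\in\hat C$, and $p:=\nabla h(\nu)\in\partial h(\nu)$, and apply the defining inequality \eqref{subdiff*} for $p$ at $\nu$ to the point $\eta=L\hat\eta+b$, which lies in $C\subset\mathrm{dom}(h)$ for every $\hat\eta\in\hat C$. Using $(L\hat\eta+b)-\nu=L(\hat\eta-\hat\nu)$ and $\langle p,L(\hat\eta-\hat\nu)\rangle=\langle L^Tp,\hat\eta-\hat\nu\rangle$, this inequality turns into $\hat h(\hat\eta)\ge\hat h(\hat\nu)-\langle L^Tp,\hat\eta-\hat\nu\rangle$ for all $\hat\eta\in\hat C$ — which, the cases $\hat\eta\notin\hat C$ being trivial, says exactly that $\widetilde{\nabla h(\nu)}=L^Tp\in\partial\hat h(\hat\nu)$.

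Assertion (2) is the same computation run backwards, and this is the only place the hypothesis $\mathrm{dom}(h)=C$ is used. Given $\hat\nu\in\hat C$, its counterpart $\nu\in C$, a subgradient $\hat p:=\nabla\hat h(\hat\nu)\in\partial\hat h(\hat\nu)$, and any $x\in\IR^{sq}$ with $\widetilde x=L^Tx=\hat p$ (such $x$ exist because $L^T$ is surjective, $L$ being injective, though the statement is harmless read vacuously), we must check \eqref{subdiff*} for $x$ at $\nu$ at every $\eta\in\IR^{sq}$. For $\eta\notin C$ it is trivial since $h(\eta)=\infty$; for $\eta\in C$ we write $\eta=L\hat\eta+b$ with $\hat\eta\in\hat C$ and, exactly as before, $h(\eta)\ge h(\nu)-\langle x,\eta-\nu\rangle$ becomes $\hat h(\hat\eta)\ge\hat h(\hat\nu)-\langle\hat p,\hat\eta-\hat\nu\rangle$, which holds by the choice of $\hat p$. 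I expect the only real obstacle here to be the matching of quantifiers and domains — in (1) one uses that \eqref{subdiff*} for $h$ holds at every $\eta$, in particular on the affine slice $C$, whereas in (2) one uses $\mathrm{dom}(h)=C$ to restrict to $\eta\in C$, since $\hat h$ only "sees" that slice — together with the bookkeeping identity $\widetilde{(\cdot)}=L^T(\cdot)$; no constraint qualification or relative-interior condition is needed, precisely because the subgradient inequalities are verified by hand rather than through a general affine chain rule.
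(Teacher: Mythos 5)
Your proof is correct and follows essentially the same route as the paper: both verify the subgradient inequality \eqref{subdiff*} directly on the slice $C$, with your identity $\langle p, L(\hat\eta-\hat\nu)\rangle=\langle L^{T}p,\hat\eta-\hat\nu\rangle$ being exactly the paper's step of splitting off the $q$-th coordinate of each block and substituting $\nu_{k,q}=|S_k|/N-\sum_{c<q}\hat\nu_{k,c}$. Your packaging of the correspondence as the affine map $\nu=L\hat\nu+b$ with $\widetilde{\xi}=L^{T}\xi$ is a tidy reformulation, but not a different argument.
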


\begin{proof}
This follows readily from the definition of the subdifferential as in \eqref{subdiff*}. Indeed, to see $(1)$, take any $\hat{\eta} \in \hat{C}$. Then, writing $\nu_{k,q} = |S_k|/N-\sum_{c=1}^{q-1}\hat{\nu}_{k,c}$, we have
\begin{align*}
    \hat{h}(\hat{\eta}) = h(\eta) &\ge h(\nu) - \langle \nabla h(\nu), \eta - \nu \rangle\\
    &= \hat{h}(\hat{\nu}) - \langle \widehat{\nabla h(\nu)}, \hat{\eta} - \hat{\nu} \rangle - \sum_{k=1}^s (\nabla h(\nu))_{k,q}\sum_{c=1}^{q-1} (\nu_{k,c}-\eta_{k,c})\\
    &=\hat h (\hat \nu)-\langle \widetilde{\nabla h (\nu)},\hat\eta-\hat\nu\rangle,
\end{align*}
which is what had to be proven. Reversing these arguments immediately leads to $(2)$, where the condition $\mathrm{dom}(h) = C$ guarantees that we may restrict ourselves to vectors $\eta \in C$.
\end{proof}

We are now ready to prove an analogue of Lemma \ref{lem:convexduality*} for the functions $f$ and $g$ from \eqref{fandg*}.

\begin{lem}\label{convexdualityfg*}
The statement of Lemma \ref{lem:convexduality*} continues to hold for $f$ and $g$ as in \eqref{fandg*}.
\end{lem}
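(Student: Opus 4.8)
The plan is to mirror the three-step structure of the proof of Lemma~\ref{lem:convexduality*}, but with all gradients replaced by suitably chosen subgradients and with the reduction to the variables $\hat\nu$ on $\hat C$ carried out via Lemma~\ref{lem:SD*}. First I would record the relevant facts: $g$ is a (smooth, strictly convex) quadratic, hence of Legendre type with $g^*(\nu)=\tfrac12\nu^T\mathcal A\nu$ and $\nabla g=\mathcal A^{-1}(\cdot)$, $\nabla g^*=\mathcal A(\cdot)$ genuine inverse diffeomorphisms; while $f$ is finite everywhere and differentiable, with $f^*$ given by \eqref{convexconjugates*}, $\mathrm{dom}(f^*)=C$, and $\partial f^*$ the affine flats described before the lemma. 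The key observation is that although $\nabla f(\xi)$ ranges only over $C$, its pre-image under $\mathcal A^{-1}$ is all of $\IR^{sq}$, so the candidate bijection $\nabla g\colon \xi\mapsto\mathcal A^{-1}\xi$ maps $\IR^{sq}$ onto $\IR^{sq}$, and we must check it carries critical points and maximizers of $f-g$ on $\mathrm{dom}(g)=\IR^{sq}$ to those of $g^*-f^*$ on $\mathrm{dom}(f^*)=C$.

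Next I would establish the duality identity \eqref{eq:convexduality*} itself for this pair. Since $f$ is everywhere finite and differentiable and $g^*$ is a smooth quadratic on all of $\IR^{sq}$, this is a standard min–max/Fenchel computation: $\sup_\xi\{f(\xi)-g(\xi)\}=\sup_\xi\{f(\xi)-\sup_\nu(\xi^T\nu-g^*(\nu))\}=\sup_\xi\inf_\nu\{f(\xi)-\xi^T\nu+g^*(\nu)\}$, and one swaps the order (justified because $g^*$ is coercive, $f$ is convex, or simply by citing \cite[Appendix C]{EE83} as in Lemma~\ref{lem:convexduality*}, noting their hypotheses cover $f$ finite and $g^*$ quadratic) to get $\sup_\nu\{g^*(\nu)-\sup_\xi(\xi^T\nu-f(\xi))\}=\sup_{\nu\in C}\{g^*(\nu)-f^*(\nu)\}$. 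Then I would handle the bijection on critical points: if $\xi$ is critical for $f-g$ then $\nabla f(\xi)=\nabla g(\xi)=\mathcal A^{-1}\xi=:\nu\in C$; applying $\nabla g^*=\mathcal A(\cdot)$ gives $\xi=\mathcal A\nu\in\partial g^*(\nu)$, and from \eqref{nablainvgen*} applied to $h=f$ we get $\xi\in\partial f^*(\nu)$ as well, so $0\in\partial g^*(\nu)-\partial f^*(\nu)$, i.e.\ $\nu$ is critical for $g^*-f^*$ on $C$. Conversely, a critical point $\nu\in C$ of $g^*-f^*$ means $\mathcal A\nu\in\partial f^*(\nu)$, hence by \eqref{nablainvgen*} $\nu=\nabla f(\mathcal A\nu)$, so $\xi:=\mathcal A\nu$ satisfies $\nabla f(\xi)=\nu=\mathcal A^{-1}\xi=\nabla g(\xi)$ and is critical for $f-g$. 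The value identity $f(\xi)-g(\xi)=g^*(\nu)-f^*(\nu)$ at such points follows exactly as in \eqref{eq:Id1*}: using $\nabla f(\xi)=\nabla g(\xi)=\nu$ and \eqref{eq:argsup*} (which holds for subgradients of the closed function $f^*$), $g^*(\nu)-f^*(\nu)=g^*(\nabla g(\xi))-f^*(\nabla f(\xi))+\xi^T(\nabla f(\xi)-\nabla g(\xi))=f(\xi)-g(\xi)$.

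Finally, the maximizer statement follows by combining the duality \eqref{eq:convexduality*} with the value identity, just as in the last paragraph of the proof of Lemma~\ref{lem:convexduality*}: a maximizer $\xi$ of $f-g$ is in particular critical, so $\nu=\mathcal A^{-1}\xi$ is critical for $g^*-f^*$ with the same value, which by \eqref{eq:convexduality*} must be the supremum, so $\nu$ is a maximizer; conversely for $\nu$. The role of Lemma~\ref{lem:SD*} here is to make rigorous the sloppy phrase ``$0\in\partial g^*(\nu)-\partial f^*(\nu)$'': since $f^*$ is only non-differentiable in directions orthogonal to $C$ (the affine flats in its subdifferential are exactly the $\{x:\widetilde x=\nabla\hat{f^*}(\hat\nu)\}$ of part (2)), criticality of $g^*-f^*$ should be read as criticality of $\hat{g^*}-\hat{f^*}$ on the open set $\hat C$, and Lemma~\ref{lem:SD*} translates this back and forth; I would phrase the critical-point condition throughout in terms of the $\widetilde{(\cdot)}$ reduction so that ``$\partial g^*(\nu)\cap\partial f^*(\nu)\neq\emptyset$'' literally means $\nabla\hat{g^*}(\hat\nu)=\nabla\hat{f^*}(\hat\nu)$. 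The main obstacle I anticipate is precisely this bookkeeping — ensuring that ``critical point of $g^*-f^*$'' is given the right meaning on the lower-dimensional domain $C$ and that the chosen subgradient $\xi=\mathcal A\nu$ (a single vector, not a flat) is a legitimate representative consistent with \eqref{eq:nablainv*} and \eqref{nablainvgen*} — rather than any genuinely hard analysis; once the definitions are pinned down, every step is a direct transcription of the Legendre-type argument.
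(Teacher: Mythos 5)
Your proposal is correct and follows essentially the same route as the paper's proof: duality \eqref{eq:convexduality*} itself needs no differentiability (the paper invokes this directly, matching your fallback of citing \cite[Appendix C]{EE83}), the bijection on critical points is obtained from \eqref{nablainvgen*} together with Lemma~\ref{lem:SD*} to translate ``$\nabla g^*(\nu)\in\partial f^*(\nu)$'' into the differentiable statement $\nabla\hat g^*(\hat\nu)=\nabla\hat f^*(\hat\nu)$ on $\hat C$, and the value identity and maximizer statement follow exactly as in \eqref{eq:Id1*}. The only cosmetic difference is that you package the criticality condition directly as a subdifferential-intersection statement and invoke Lemma~\ref{lem:SD*} once at the end, whereas the paper carries the $\hat{\cdot}$ notation through the whole chain $\nabla f^*(\nabla f(\nabla g^*(\nabla g(\xi))))=\nabla f^*(\nabla g(\xi))$; both bookkeeping choices are sound and rely on the same two ingredients.
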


Note that in the case, a critical point of $g^*-f^*$ has to be understood as a critical point of $\hat{g}^* - \hat{f}^*$, which is a differentiable function on $\hat{C}$. To switch between the differentials of $g^*$ and $\hat{g}^*$, we may then use Lemma \ref{lem:SD*} (1) and the uniqueness of the subdifferential for differentiable functions. In particular, once again it holds that if the right supremum is attained at a unique maximizer $\nu^*$, then the left supremum is attained at the unique maximizer $\xi^*=\nabla g^*(\nu^* )$.

\begin{proof}
We follow the proof of Lemma \ref{lem:convexduality*} and adapt it to the situation under consideration. Obviously, \eqref{eq:convexduality*} holds since for this relation, no differentiability assumptions are needed. For the rest of the proof, note that for $f$ and $g$ as in \eqref{fandg*}, the functions $f$, $g$, $g^*$, $\hat{f}^*$ and $\hat{g}^*$ are differentiable, which in particular yields uniqueness of the corresponding subdifferentials.

Suppose that $\xi$ is a critical point of $f-g$. In particular,
\begin{align}\label{eq:critpointunten*}
\nabla (f (\xi)-g(\xi))=0,
\end{align}
i.\,e.\ $\nabla f$ and $\nabla g$ take the same value on critical points, and hence by \eqref{eq:nablainv*},
\begin{align*}
\nabla f (\nabla g^*(\nabla g(\xi)))=\nabla f(\xi)=\nabla g(\xi).
\end{align*}
Now choose any $\nabla f^*(\nabla g(\xi)) \in \partial f^*(\nabla g(\xi))$ and apply it to both sides of this equality, which yields
\begin{align*}
\nabla f^*(\nabla f (\nabla g^*(\nabla g(\xi))))=\nabla f^*(\nabla g(\xi)),
\end{align*}
or, in terms of $\hat{f}^*$,
\begin{align}\label{relhat}
\nabla \hat{f}^*\big((I_s\otimes\tilde P)\nabla f (\nabla g^*(\nabla g(\xi)))\big)=\nabla \hat{f}^*(\widehat{\nabla g(\xi)}),
\end{align}
where for convenience of notation we used $I_s\otimes \tilde P$ instead of the $\widehat{\cdot}$ notation.

Now note that for any $\omega \in \mathbb{R}^{sq}$, $\nabla \hat{f}^*(\widehat{\nabla f(\omega)}) = \widetilde{\omega}$. To see this, note that by \eqref{nablainvgen*}, a suitable choice of subgradient yields $\nabla f^*(\nabla f(\omega)) = \omega$, which by Lemma \ref{lem:SD*} (part (1), for $h = f^*$, $\nu = \nabla f(\omega)$) yields the claim for this choice of subgradient, from where the claim also follows in general since $\hat{f}^*$ is differentiable (and thus has a unique subdifferential). In particular, the left-hand side of \eqref{relhat} reads $\widetilde{\nabla g^*(\nabla g(\xi))}$, which by Lemma \ref{lem:SD*} (1) equals $\nabla \hat{g}^*(\widehat{\nabla g(\xi)})$. Altogether, $\nabla \hat{f}^*(\widehat{\nabla g(\xi)}) = \nabla \hat{g}^*(\widehat{\nabla g(\xi)})$, i.\,e.\ $\widehat{\nabla g(\xi)}$ is a critical point of $\hat{g}^* - \hat{f}^*$.
Moreover, it follows from \eqref{eq:critpointunten*} and the representation \eqref{eq:argsup*} that
\begin{align}\label{eq:Id1unten*}
   \hat g^*(\widehat{\nabla g(\xi)})-\hat f^*(\widehat{\nabla g(\xi)})=&   g^*(\nabla g(\xi))-f^*(\nabla g(\xi))\nonumber\\
   =& g^*(\nabla g(\xi))-f^*(\nabla f(\xi))+\xi^T(\nabla f(\xi)-\nabla g(\xi))=f(\xi)-g(\xi).
\end{align}

Conversely, let $\hat{\nu}$ be a critical point of $\hat{g}^*-\hat{f}^*$, i.\,e. $\nabla (\hat{g}^*(\hat{\nu}) -\hat{f}^*(\hat{\nu})) = 0$. Using Lemma \ref{lem:SD*} (both parts), we may therefore choose $\nabla f^*(\nu) \in \partial f^*(\nu)$ such that $\nabla (g^*(\nu) -f^*(\nu)) = 0$. As above, it follows that
\begin{align*}
\nabla f^* (\nabla g(\nabla g^*(\nu)))=\nabla f^*(\nu)=\nabla g^*(\nu).
\end{align*}
Applying $\nabla f$ to both sides, we obtain that $\nabla g^*(\nu)$ is also a critical point of $f-g$, and as above we may furthermore deduce that it satisfies
\begin{equation}\label{eq:Id2unten*}
    f(\nabla g^*(\nu)) - g(\nabla g^*(\nu)) = g^*(\nu)-f^*(\nu).
\end{equation}

Suppose in addition that $\xi$ is a maximizer such that $f(\xi)-g(\xi)=\sup \{f-g\}$. Then, by convex duality \eqref{eq:convexduality*} and \eqref{eq:Id1unten*}, we obtain that $\nabla g(\xi)$ is a maximizer of $g^*-f^*$. Conversely let $\nu$ be a maximizer of $g^*-f^*$, then by \eqref{eq:convexduality*} and \eqref{eq:Id2unten*} also $\nabla g^*(\nu)$ is a maximizer of $f-g$.
\end{proof}

Subsequently, no subdifferentials will appear and all $\nabla, \partial$ are classical derivatives.

\section{Finding the minimizer of $\phi$}\label{sec:Minimizer}

To start, we derive the appearance of the function
\begin{align}\label{eq:phi}
 \phi_N(\xi):=\phi(\xi):=\tfrac 1{2} \xi^T\mathcal A \xi-\sum_{k=1}^s \frac{\abs{S_k}}{N} \log\left(\sum_{c=1}^q \exp\left[\xi^T\mathcal A(e_k\otimes e_c) \right]\right)
 \end{align}
in Lemma \ref{lem:phi} as the exponential density for the smoothened distribution of the magnetization under the Gibbs measure.

\begin{proof}[Proof of Lemma \ref{lem:phi}]
As the Hubbard-Stratonovich transform from \cite{LS18, EW90} suggests, we begin to evaluate for any measurable set $U\in\mathcal B (\IR^{sq})$
\begin{align*}
 &\mu_N\circ ( \mathcal S^{\theta}m)^{-1}\ast\mathcal N (0,N( \mathcal S^{1-\theta}\mathcal A \mathcal S^{1-\theta})^{-1})(U)\\
 &=c_N\sum_{\omega\in\{1,\dots q\}^N} \int_U\exp\left[-\frac 1 2\left(x-\mathcal S^\theta m\right)^T\left( \frac{\mathcal S^{1-\theta}\mathcal A\mathcal S^{1-\theta}}{N}\right)\left(x-\mathcal S^\theta m\right)+\frac 1 2  \left(\frac {\mathcal S m}{\sqrt N}\right)^T\mathcal A \frac{\mathcal S m}{\sqrt N}\right] d^{sq}x
\\
&=c_N \int_U \exp\left[-\tfrac 12 x^T\left(\frac{\mathcal S^{1-\theta}\mathcal A\mathcal S^{1-\theta}}{N}\right) x\right]
\sum_{\omega\in\{1,\dots q\}^N}\exp\left[x^T\left(\frac{\mathcal S^{1-\theta}\mathcal A \mathcal S}{N}\right) m\right] d^{sq}x\\
&=c_N \int_U \exp\left[-\tfrac 12 x^T\left(\frac{\mathcal S^{1-\theta}\mathcal A\mathcal S^{1-\theta}}{N}\right) x\right]
\sum_{\omega_1=1}^q\cdots\sum_{\omega_N=1}^q\exp\left[\tfrac 1 { N} \sum_{k=1 }^s \sum_{c=1}^q \sum_{i\in S_k}x^T(\mathcal S^{1-\theta}\mathcal A)(e_k\otimes e_c)\eins_c(\omega_i)\right]d^{sq}x\\
&=c_N \int_U \exp\left[-\tfrac 12 x^T\left(\frac{\mathcal S^{1-\theta}\mathcal A\mathcal S^{1-\theta}}{N}\right) x\right]
\sum_{\omega_1=1}^q\cdots\sum_{\omega_N=1}^q\prod_{k=1}^s\prod_{i\in S_k}\exp\left[\tfrac 1 { N} x^T(\mathcal S^{1-\theta}\mathcal A)(e_k\otimes e_{\omega_i})\right]d^{sq}x\\
&=c_N \int_U \exp\left[-\tfrac 12 x^T\left(\frac{\mathcal S^{1-\theta}\mathcal A\mathcal S^{1-\theta}}{N}\right) x\right]
\prod_{k=1}^s\left(\sum_{c=1}^q \exp\left[\tfrac 1 { N} x^T(\mathcal S^{1-\theta}\mathcal A)(e_k\otimes e_c)\right]\right)^{\abs{S_k}}d^{sq}x,
\end{align*}
where in the last steps we basically used the fact that in each block $S_k$, the quantity is independent of the individual node $\omega_i$. Slightly rewriting, we therefore obtain
\begin{align*}
&\mu_N\circ ( \mathcal S^\theta m)^{-1}\ast\mathcal N (0,N(\mathcal S^{1-\theta}\mathcal A \mathcal S^{1-\theta})^{-1})(U)\\
&=c_N \int_U \exp\left[-\frac N2 \left( \frac{\mathcal S^{1-\theta}}{N}x \right)^T\mathcal A \left( \frac{\mathcal S^{1-\theta}}{N}x \right) +\sum_{k=1}^s \abs{S_k}\log\left(\sum_{c=1}^q \exp\left[\left( \frac{\mathcal S^{1-\theta}}{N}x \right)^T\mathcal A(e_k\otimes e_c) \right]\right)\right]d^{sq}x.
\end{align*}
For simplicity, we write $\phi$ as presented in \eqref{eq:phi} and shift by an arbitrary $v\in\IR ^{sq}$ to obtain
\begin{align*}
     &\mu_N\circ \left(\mathcal S^\theta(m-v)\right)^{-1}\ast\mathcal N \left(0,N( \mathcal S^{1-\theta}\mathcal A \mathcal S^{1-\theta})^{-1}\right)(d^{sq}x)\nonumber\\
 &=c_N \exp\left[- N \phi\left(\left(\frac{\mathcal S}{ N}\right)^{1-\theta}\frac x { N^{\theta}}+\frac {\mathcal S}{N}v \right)\right]d^{sq}x,
\end{align*}
as claimed.
\end{proof}

The central aim of this section is to find the minimizer of $\phi$. To see that the set of minimizers is non-empty, note that there exists an $R>0$ such that
\begin{align}\label{eq:defR}
 \phi(\xi)\ge\tfrac 13\xi^T\mathcal A \xi\quad\text{for all }\norm{\xi}>R
\end{align}
for $N$ sufficiently large, i.\,e.\ $\varphi$ behaves at least quadratically for $\lVert \xi \rVert$ large. Indeed, this follows from considering the limit $\norm\xi\to\infty$ and $\mathcal A$ being positive definite, i.\,e.\ $\tfrac 1 6 \xi^T\mathcal A \xi\ge \lambda\norm{\xi}^2$ for some $\lambda>0$. 

It turns out that finding the minimizer in the case of (asymptotically) equal block sizes is fairly easily dealt with by the convex duality results from the previous section and \cite{Prepreprint}.

\begin{lem}\label{lem:Minimizer}
The minimum $\phi^*=\inf_{\xi\in\IR^ {sq}}\phi(\xi)$ of $\phi$ is attained at
\begin{align}\label{setminimizers}
    \xi^*\in\Big\{\xi\in (0,1) ^{sq}: \sum_{c=1}^{q}\xi_{k,c}=\frac{\abs{S_k}}N\text{ for all }k=1,\dots ,s\Big\} = C 
\end{align}
with $C$ as in \eqref{DasIstC}. Under the conditions of Theorem \ref{thm:CLT}, the minimum is uniquely attained at $\xi^* = \frac 1 {q s}\mathbf 1 _{sq}$ in the case of equal block sizes. If the block sizes are asymptotically equal only, then for any (big) $R>0$ and any (small) $r>0$ to be chosen later, any minimizer $\xi_N^*$ of $\phi_N$ in $B_R(0)$ satisfies $\xi^*_N\in B_r(\frac 1 {q s}\mathbf 1 _{sq})$ for $N$ large enough. 
\end{lem}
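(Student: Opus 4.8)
The plan is to exploit the convex duality established in Lemma~\ref{convexdualityfg*}. Recall that $\phi(\xi) = g(\mathcal A\xi) - f(\mathcal A\xi)$ with $f,g$ as in \eqref{fandg*}. Since $\mathcal A$ is invertible, the change of variables $\eta = \mathcal A\xi$ turns minimizing $\phi$ over $\IR^{sq}$ into minimizing $g(\eta) - f(\eta)$, i.e.\ maximizing $f(\eta) - g(\eta)$, and by Lemma~\ref{convexdualityfg*} the critical points (and maximizers) of $f - g$ are in 1:1 correspondence, via $\nabla g(\eta) = \mathcal A^{-1}\eta$, with those of $\hat g^* - \hat f^*$ on $\hat C$. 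Pulling back, the minimizers of $\phi$ are precisely the points $\xi = \mathcal A^{-1}\eta$ with $\nabla g(\eta) = \mathcal A^{-1}\eta \in C$ a maximizer of $g^* - f^*$ over $C$ (using $f^*$ finite only on $C$ as in \eqref{convexconjugates*}). Since $g^*(\nu) - f^*(\nu) = \tfrac12\nu^T\mathcal A\nu - \nu^T\log\nu + \mathcal H$, this is exactly (up to the constant $\mathcal H$ and a sign) the free-energy-type functional whose maximizers over $C$ are identified in \cite{Prepreprint}. This gives \eqref{setminimizers}: every minimizer $\xi^*$ of $\phi$ satisfies $\xi^* \in C$, and in particular $\xi^*_{k,c} \in (0,1)$ with $\sum_c \xi^*_{k,c} = |S_k|/N$.

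For the uniqueness statement in the equal-block-size case, I would invoke the result of \cite{Prepreprint} that in the high temperature regime $\lVert\sqrt\Gamma\mathcal A\sqrt\Gamma\rVert_2 < \zeta_q$ the functional $g^* - f^*$ has a unique maximizer over $C$, namely the uniform point $\nu^* = \tfrac1{qs}\mathbf 1_{sq}$ (the matrix-order-parameter analogue of the barycenter). Then the dual maximizer of $f - g$ is $\eta^* = \nabla g^*(\nu^*) = \mathcal A\nu^* = \tfrac1{qs}\mathcal A\mathbf 1_{sq}$, and hence the unique minimizer of $\phi$ is $\xi^* = \mathcal A^{-1}\eta^* = \nu^* = \tfrac1{qs}\mathbf 1_{sq}$. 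One should double-check directly that $\xi^* = \tfrac1{qs}\mathbf 1_{sq}$ is indeed a critical point of $\phi$: since $\mathcal A\mathbf 1_{sq}$ has constant entries within each block and the log-sum-exp term then contributes equally to each color, the stationarity equation $\mathcal A\xi = \sum_k \tfrac{|S_k|}{N}\mathcal A(e_k\otimes\,\cdot\,)\cdot(\text{softmax})$ is satisfied at the uniform point (here all $|S_k|/N = 1/s$); this also pins down $q s$ as the correct normalization.

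The last, genuinely quantitative, assertion concerns asymptotically equal but not exactly equal block sizes. Here $\phi = \phi_N$ depends on $N$ through the weights $|S_k|/N \to 1/s$. I would argue by a compactness/continuity argument: fix $R > 0$. On $B_R(0)$ the functions $\phi_N$ converge uniformly to the limiting $\phi_\infty$ (the one with all weights $1/s$), because the only $N$-dependence is in the finitely many coefficients $|S_k|/N$, the log-sum-exp term is smooth, and $B_R(0)$ is compact. By the equal-size case, $\phi_\infty$ has the unique global minimizer $\tfrac1{qs}\mathbf 1_{sq}$ on $B_R(0)$ (choosing $R$ large enough that this is also the global minimizer on all of $\IR^{sq}$, which is possible by the quadratic lower bound \eqref{eq:defR}), and moreover $\phi_\infty$ is strictly larger than its minimum outside any ball $B_r(\tfrac1{qs}\mathbf 1_{sq})$ — call this gap $\delta = \delta(r) > 0$. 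Uniform convergence then forces any minimizer $\xi_N^*$ of $\phi_N$ in $B_R(0)$ to land in $B_r(\tfrac1{qs}\mathbf 1_{sq})$ once $N$ is large enough that $\lVert\phi_N - \phi_\infty\rVert_{\infty,B_R(0)} < \delta/2$.

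The main obstacle is the first paragraph: correctly transporting the characterization of maximizers of the free-energy functional from \cite{Prepreprint} through the duality machinery of Section~\ref{sec:convex}, in particular keeping straight the bookkeeping between $\phi$ on $\IR^{sq}$, the Legendre pair $f,g$, their conjugates on $C$, and the reduced versions $\hat f^*, \hat g^*$ on $\hat C$ — and verifying that \cite{Prepreprint}'s high-temperature uniqueness really is stated for the functional $g^* - f^*$ (or can be translated into it) rather than merely for some reparametrization. Everything after that — the direct critical-point check at $\tfrac1{qs}\mathbf 1_{sq}$ and the uniform-convergence perturbation argument — is routine.
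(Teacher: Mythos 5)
Your proposal is correct and follows essentially the same route as the paper: convex duality (Lemma~\ref{convexdualityfg*}) to pass to $g^*-f^*$ on $C$, the uniqueness of the maximizer $\nu^*=\tfrac{1}{qs}\mathbf 1_{sq}$ from \cite{Prepreprint} in the equal-block case, and a uniform-convergence-on-compacts argument for asymptotically equal blocks. The only minor difference is that the paper establishes \eqref{setminimizers} directly from the stationarity equation \eqref{eq:NablaPhi} (from $\mathcal A\xi=\sum_k\tfrac{|S_k|}{N}\mathcal A(e_k\otimes w_k)$ one reads off positivity and, after summing over $c$, the block-sum constraint), whereas you deduce it from the duality bijection into $\mathrm{dom}(f^*)=C$ --- both are valid and of comparable length.
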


\begin{proof}
To see the first claim, first note that
\begin{align}\label{eq:NablaPhi}
    \nabla \phi(\xi)=\mathcal A\xi- \sum_{k=1}^s\frac{\abs{S_k}}{N} \frac{\sum_{c=1}^q\mathcal A(e_k\otimes e_c)\exp(\xi ^T\mathcal A(e_k\otimes e_c))}{\sum_{c=1}^q\exp(\xi^T\mathcal A(e_k\otimes e_c))}=0,
\end{align}
so that we must have $\xi_{k,c} > 0$. Moreover, we may plug the upper identity into the $s$ equations $\sum_{c=1}^q \partial_{k,c} \phi(\xi) = 0$, which immediately leads to the result.

In order to find the supremum of $-\phi$, note that by Lemma \ref{convexdualityfg*},
\begin{align*}
\sup_{\xi\in\IR^{sq}} \{-\phi(\xi)\}&= \sup_{\tilde\xi=\mathcal A \xi}\left\{\sum_{k=1}^s\frac{\abs{S_k}}{N}\log\sum_{c=1}^q\exp(\tilde \xi_{k,c})- \frac12\tilde\xi  ^T\mathcal A ^{-1}\tilde\xi \right\}\\
&=\sup_{\tilde\xi \in\IR^{sq}}\left\{ f(\tilde\xi )-g(\tilde\xi )\right\}\\
&=\sup_{\nu\in\mathrm{dom}(f^*)}\{g^*(\nu)-f^*(\nu)\},
\end{align*}
and if $\nu^*$ is a maximizer of $g^*-f^*$, then $\tilde\xi^*=\nabla g^*(\nu^*)$ maximizes $f-g$. Since $\nabla g^*(\nu) = \mathcal A \nu$, it follows that $\nu^*$ is a minimizer of $\phi$.

Therefore, it remains to check the maximizers of $g^*-f^*$. Here, using \eqref{convexconjugates*}, we have
\[
\sup_{\nu \in \mathrm{dom}(f^*)} \{g^*(\nu) - f^*(\nu)\} = \sup_{\nu\in C}\left\{\frac{1}{2}\nu^T\mathcal A \nu-\nu^T\log \nu\right\} - \mathcal H.
\]
In \cite[\S 4 \& (4.1)]{Prepreprint}, it has been shown that for equal block sizes $|S_k|=N/s$, the supremum on the right-hand side is attained at a unique point $\nu^*=\frac 1{ sq} \mathbf 1_{sq}$. For asymptotically equal block sizes, the function $\phi_N$ converges uniformly on compact sets to
\[
\phi_\infty(\xi) :=\tfrac 1{2} \xi^T\mathcal A \xi-\frac{1}{s}\sum_{k=1}^s \log\left(\sum_{c=1}^q \exp\left[\xi^T\mathcal A(e_k\otimes e_c) \right]\right),
\]
i.\,e.\ $\phi$ from \eqref{eq:phi} for (non-asymptotically) equal block sizes. Take $R > 0$ such that $\frac 1 {sq}\mathbf 1_{sq}\in B_R(0)$. Then, for any small $\varepsilon > 0$, the minimum $\phi_N^*$ of $\phi_N$ must satisfy $\phi^*-\varepsilon < \phi_N^* < \phi^*+\varepsilon$ by uniform convergence. In particular, it follows that if $\xi_N^*$ is a minimizer of $\phi_N$, then by continuity, for any small $r > 0$, $\xi^*_N\in B_r(\frac 1{ sq} \mathbf 1_{sq})$ for any $N$ sufficiently large.
\end{proof}

Let us now turn to the case of general block sizes, for which we will study $\phi$ in more detail. This analysis is inspired by \cite{EW90}, which in turn relies on the ideas of \cite{KeSchon89}. In our case of $s>1$ blocks however, we need to replace a number of arguments that cannot be generalized directly, like monotonicity (in $\beta$ and $t$ defined below), real convex functions having at most two zeros or applying the logarithm.

According to \eqref{eq:NablaPhi}, any minimizer satisfies the critical equation
\begin{align}\label{eq:criteqn}
\xi_{k,c}:=(e_k\otimes e_c)^T\xi= \frac{\abs{S_k}\exp(\xi ^T(Ae_k\otimes e_c))}{N\sum_{\tilde c=1}^q\exp(\xi ^T(Ae_k\otimes e_{\tilde c}))}.
\end{align}
 For simplicity we write $\xi_{\bullet,c}=(I_s\otimes e_c^T)\xi\in\IR^s$ for the vector of a color $c$ in different blocks and $\xi_{k,\bullet}=(e_k^T\otimes I_q)\xi\in\IR^q$ for the vector of colors in a block $k$. Using the componentwise exponential and abbreviating $w_k=\frac{\exp(\xi^T(Ae_k\otimes I_q))}{\sum_{\tilde c=1}^q\exp(\xi ^T(Ae_k\otimes e_{\tilde c}))}\in\IR^q$, we may write the Hessian of $\phi$ as the block matrix
\begin{align}\label{eq:Hessian}
 H_\phi(\xi)&=A\otimes I_q-\sum_{k=1}^s\left(\frac{AS}{N}e_k(Ae_k)^T \right)\otimes\left(\operatorname{diag}(w_k)-w_kw_k^T\right)\\
 &=\mathcal A\left[I_{sq}-\sum_{k=1}^s\left(e_ke_k^T \right)\otimes\left(\operatorname{diag}(\tfrac{\abs{S_k}}Nw_k)-\tfrac{\abs{S_k}}Nw_kw_k^T\right)\mathcal A\right].\nonumber
\end{align}
Also note that by \eqref{eq:criteqn}, any minimizer $\xi$ satisfies $\xi_{k,\bullet}=\tfrac{\abs{S_k}}Nw_k$.

Rearranging \eqref{eq:criteqn} and applying the logarithm, we obtain for all $k=1,\dots,s$ that
\begin{align}\label{eq:KLS4.3}
& \xi_{\bullet,c}^TA e_k-\log(\xi_{\bullet,c}^T e_k)=\log\left(\sum_{\tilde c=1}^q\exp(\xi^T_{\bullet,\tilde c}Ae_k)\right)-\log\left(\frac{\abs{S_k}}{N}\right)
\end{align}
is independent of $c=1,\dots,q$. Multiplying the same equation with $\xi_{k,c}$ and summing over $k$ and $c$ implies that for any critical point $\xi^*$ the value of $\phi$ can also be represented as a relative entropy
\begin{align*}
    \phi(\xi^*)=-\tfrac 1 2 \sum_{k,c}\xi_{k,c}\log\left(\frac{\sum_{\tilde c}^q \exp\big(\xi_{\bullet,\tilde c}^TAe_k\big)}{\xi_{k,c}}\right)-\tfrac 1 2\mathcal H(|S_\bullet|/N).
\end{align*}

Let us collect some important properties that will help to classify and parametrize minimizers, generalizing \cite[Lemmas 4.3 \& 4.4]{Prepreprint}.

 \begin{lem}\label{lem:ordered+atmost2} Any minimizer $\xi$ of $\phi$ satisfies
 \begin{enumerate}
     \item The color-coordinates may be ordered decreasingly, i.e. $\xi_{k,1}\ge\dots\ge\xi_{k,q}$ for all $k$.
      \item If for some $c=1,\dots,q$ we have $\xi_{k,c}=\xi_{k,c+1}$ for some $k$, then it holds for all $k$.
     \item There are at most two different entries in $\xi _{k,\bullet}$, for any $k=1,\dots,s$.
 \end{enumerate}
 \end{lem}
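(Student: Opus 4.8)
The plan is to read off all three properties from the stationarity equations \eqref{eq:NablaPhi}--\eqref{eq:KLS4.3}, together with the invariance of $\phi$ under a simultaneous relabelling of the colours in every block, and — where the first order information is not enough — from the second order condition $H_\phi(\xi)\succeq 0$ afforded by \eqref{eq:Hessian}. The statement is the multi-block analogue of \cite[Lemmas 4.3 \& 4.4]{Prepreprint}, and the whole difficulty is that for $s>1$ the value $\xi_{k,c}$ in block $k$ depends, through $(\mathcal A\xi)_{k,c}=\sum_j A_{kj}\xi_{j,c}$, on the values in \emph{all} blocks, so the monotonicity arguments that settle the $s=1$ Potts model in \cite{KeSchon89,EW90} have to be replaced.

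For (1): since $\mathcal A=A\otimes I_q$ commutes with $I_s\otimes P_\pi$ for every $q\times q$ permutation matrix $P_\pi$, one has $\phi((I_s\otimes P_\pi)\xi)=\phi(\xi)$, so after applying a single permutation we may assume the first block is ordered decreasingly; it then remains to show that the colour-ordering is the \emph{same} in every block. Fixing colours $c,c'$ and putting $d:=\xi_{\bullet,c}-\xi_{\bullet,c'}\in\IR^s$, subtracting the two instances of \eqref{eq:KLS4.3} gives $Ad=\log\xi_{\bullet,c}-\log\xi_{\bullet,c'}$ with componentwise logarithm, so $\sign((Ad)_k)=\sign(d_k)$ for all $k$, and in particular $d_k=0\iff(Ad)_k=0$. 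I would then show that $d$ cannot have entries of both signs: decomposing $d=d^+-d^-$ into its positive and negative parts (disjoint supports, both assumed nonzero), the sign relation forces $(Ad)_k>0$ on $\supp d^+$ and $(Ad)_k<0$ on $\supp d^-$, and I expect this to be incompatible with $\xi$ being a minimiser once one feeds the positivity $A_{jk}>0$ and the critical identity $\xi_{k,\bullet}=\tfrac{|S_k|}Nw_k$ into $H_\phi(\xi)\succeq0$, tested against the admissible colour-balanced directions supported on $c,c'$. Thus $d\ge0$ or $d\le0$, i.e.\ the orderings agree. This step — promoting one-block information to all blocks, which in \cite{Prepreprint} is automatic by symmetry — is where I expect the real work, and positive definiteness of $A$ (not merely $A_{jk}>0$) will have to be used.

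Given (1), part (2) is immediate: if $\xi_{k,c}=\xi_{k,c+1}$ for one $k$, then $d:=\xi_{\bullet,c}-\xi_{\bullet,c+1}$ has $d_k=0$, hence $(Ad)_k=\sum_j A_{kj}d_j=0$; but by (1) the vector $d$ has a single sign, so $d\ge0$ together with $A_{kj}>0$ would give $(Ad)_k>0$ unless $d\equiv0$, and likewise for $d\le0$; hence $d=0$, i.e.\ $\xi_{k',c}=\xi_{k',c+1}$ for every $k'$.

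For (3): by (1) and (2) the partition of the colours into equal-value classes is the same in every block, so all blocks carry the same number $m$ of distinct values, $\xi_{k,(1)}>\dots>\xi_{k,(m)}$, these inequalities being strict in every block simultaneously. I would rewrite \eqref{eq:criteqn} in the per-block form $h_k(\xi_{k,c})=b_{k,c}+\mathrm{const}_k$, where $h_k(x):=\log x-A_{kk}x$ is strictly concave and unimodal (increasing on $(0,1/A_{kk}]$, decreasing afterwards) and $b_{k,c}:=\sum_{j\ne k}A_{kj}\xi_{j,c}$; by (1)--(2) the vector $b_{k,\bullet}$ is ordered strictly like $\xi_{k,\bullet}$, so along the sorted colours $h_k$ is strictly increasing, which already forces all but the largest of $\xi_{k,(1)},\dots,\xi_{k,(m)}$ to lie on the increasing branch $(0,1/A_{kk}]$. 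To rule out $m\ge3$ I would then mimic the Kesten--Schonmann/Ellis--Wang reduction to a single scalar transcendental equation — a line meets the graph of a strictly convex function at most twice — after eliminating the inter-block fields $b_{k,\bullet}$ using the completeness of the interaction graph ($A_{jk}>0$ for all $j,k$) and positive definiteness of $A$. This decoupling of the blocks, so that the classical two-solutions argument applies, is the main obstacle; once carried out it gives $m\le2$, which is (3).
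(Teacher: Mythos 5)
Your proposal correctly reads off the critical relations from \eqref{eq:NablaPhi}--\eqref{eq:KLS4.3}, and your argument for part (2) is essentially the paper's: from $(Ad)_k=0$ and the one-sidedness of $d=\xi_{\bullet,c}-\xi_{\bullet,c+1}$ given by (1), positivity $A_{jk}>0$ forces $d=0$. However, the two steps you yourself flag as unresolved — establishing (1) beyond a single block, and ruling out $m\ge 3$ in (3) — are genuine gaps, and the paper closes them by routes quite different from what you sketch.

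For (1), the sign relation $\sign((Ad)_k)=\sign(d_k)$ alone does not force $d$ to be of one sign even for a symmetric positive definite $A$ with $A_{jk}>0$ (e.g.\ $A=\bigl(\begin{smallmatrix}10&1\\ 1&10\end{smallmatrix}\bigr)$, $d=(1,-1)$ gives $Ad=(9,-9)$); you would indeed need some second-order input, but you do not carry it out and it is not clear the Hessian test in the directions you indicate closes it. The paper sidesteps first- and second-order conditions entirely: using the convex duality established in Section \ref{sec:convex}, $\xi$ minimizes $\phi$ iff $\nu=\xi$ minimizes $\nu^T\log\nu-\tfrac12\nu^T\mathcal A\nu$ over $C$; then the rearrangement inequality and $A_{jk}>0$ show that any discordant per-block ordering strictly increases this functional, so the ordering must be common to all blocks. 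This is cleaner and uses only $A_{jk}>0$ (not positive definiteness, which you anticipated would be needed).

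For (3), the ``decoupling'' of the inter-block fields $b_{k,\bullet}$ that you identify as the main obstacle is never performed in the paper and, as you note, is not straightforward when $s>1$. Instead, the paper works directly from the critical identity \eqref{eq:KLS4.3}: assuming three distinct values $\xi_{k,c_1}>\xi_{k,c_2}>\xi_{k,c_3}$ (for every $k$, by (2)), strict concavity of $\log$ gives the strict inequality between the two secant slopes $\frac{\log\xi_{k,c_1}-\log\xi_{k,c_2}}{\xi_{k,c_1}-\xi_{k,c_2}}<\frac{\log\xi_{k,c_1}-\log\xi_{k,c_3}}{\xi_{k,c_1}-\xi_{k,c_3}}$; multiplying by the positive quantities $(\xi_{k,c_1}-\xi_{k,c_2})(\xi_{k,c_1}-\xi_{k,c_3})$ and summing over $k$ yields $(\xi_{\bullet,c_1}-\xi_{\bullet,c_2})^TA(\xi_{\bullet,c_1}-\xi_{\bullet,c_3})<(\xi_{\bullet,c_1}-\xi_{\bullet,c_3})^TA(\xi_{\bullet,c_1}-\xi_{\bullet,c_2})$, contradicting the symmetry of $A$. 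This replaces the scalar ``convex graph meets a line at most twice'' argument of Kesten--Schonmann/Ellis--Wang, which indeed does not extend to $s>1$ blocks, by a quadratic-form symmetry argument that survives coupling across blocks. Also, note the per-block function $h_k(x)=\log x-A_{kk}x$ you introduce is concave, not convex, so the two-solutions heuristic would need reworking even in the single-block case.

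So: (2) is fine and matches the paper; (1) and (3) as written are incomplete, and where they are incomplete the paper uses a different idea (rearrangement inequality in the dual for (1); concavity-of-$\log$ plus symmetry of $A$ for (3)).
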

 
 This is the only proof that uses the assumption of $A$ having positive entries.
 
  \begin{proof}
 (1)
 As we have seen in the proof of Lemma \ref{lem:Minimizer}, we have of $\phi$
\[
\sup_{\xi\in\IR^{sq}} \{-\phi(\xi)\} = \sup_{\nu\in C}\left\{\frac{1}{2}\nu^T\mathcal A \nu-\nu^T\log \nu\right\} - \mathcal H.
\]
Hence, by convex duality $\xi $ is a minimizer if and only if $\nu =\xi $ is a minimizer of $\xi^T\log\xi-\tfrac 1 2 \xi^T\mathcal A \xi$. Let us assume that the color coordinates are ordered $\xi_{k,1}\ge\dots\ge\xi_{k,q}$ for all $k$ and show that any tuple $\sigma$ of $s$ permutations $\sigma_k\in\mathbb S ^q$, given by $(e_k\otimes e_c)^T\xi_\sigma=\xi_{k,\sigma_k(c)}$ would only increase the value of $\phi$. Indeed, for any $j,k=1,\dots,s$ the rearrangement inequality \cite[Theorem 368]{Ineq} states that $\sum_{c=1}^q\xi_{j,c}\xi_{k,c}\ge\sum_{c=1}^q\xi_{j,\sigma_j(c)}\xi_{k,\sigma_k(c)} $, hence
\begin{align}\label{eq:ordering}
    \xi^T\log\xi-\frac 1 2 \xi^T\mathcal A \xi=\sum_{k=1} ^s\sum_{c=1}^q\xi_{k,c}\log\xi_{k,c}-\frac 1 2\sum_{j,k=1}^sA_{j,k}\sum_{c=1}^q\xi_{j,c}\xi_{k,c}\le \xi_\sigma^T\log\xi_\sigma-\frac 1 2 \xi_\sigma^T\mathcal A \xi_\sigma.
\end{align}
Here, we used that $\xi^T\log\xi$ is invariant under permutation of the vector and that $A_{j,k}> 0$. Note that the rearrangement inequality is strict if two different indices are reordered differently, i.\,e.\ the value of \eqref{eq:ordering} may only increase. Thus, if there exists $j\neq k$ and $\tilde c <c$ with different orderings $\xi_{j,\tilde c}<\xi_{j,c}$ and $\xi_{k,\tilde c}>\xi_{k,c}$, then it is not a minimum of \eqref{eq:ordering} and not a minimum of $\phi$.

 (2) Assume $\xi _{k,c}=\xi _{k, c+1}$ for some $k=1,\dots,s $ and some $c$ and consider the difference of \eqref{eq:KLS4.3} for $k,c$ and for $k,c+1$, which is equivalent to
 \begin{align*}
     (\xi_{\bullet,c}-\xi_{\bullet,c+1})^TAe_k-\log(\xi_{k,c}/\xi_{k,c+1})=0\ \Leftrightarrow\ \sum_{j=1}^sA_{j,k}\xi_{j,c}=\sum_{j=1}^sA_{j,k}\xi_{j,c+1}.
 \end{align*}
 On the other hand by (1), we have $\xi_{k,c}\ge\xi_{k, c+1}$ for all $k$ and equality follows from $A_{j,k}> 0$.
 
 (3) Suppose there exists $c_1<c_2<c_3$ with $\xi _{k,c_1}>\xi _{k,c_2}>\xi _{k,c_3}$ for some $k=1,\dots,s$ and hence for all $k$ due to (2). Again, from \eqref{eq:KLS4.3} it follows
  \begin{align*}
       \frac{(\xi_{\bullet,c_1}-\xi_{\bullet,c_2})^TAe_k}{\xi_{k,c_1}-\xi_{k,c_2}}=\frac{\log(\xi_{k,c_1})-\log(\xi_{k,c_2})}{\xi_{k,c_1}-\xi_{k,c_2}}<\frac{\log(\xi_{k,c_1})-\log(\xi_{k,c_3})}{\xi_{k,c_1}-\xi_{k,c_3}}=     \frac{(\xi_{\bullet,c_1}-\xi_{\bullet,c_3})^TAe_k}{\xi_{k,c_1}-\xi_{k,c_3}},
 \end{align*}
 where the inequality is nothing but the concavity of the logarithm. Multiplying with $(\xi_{k,c_1}-\xi_{k,c_2})$ and $(\xi_{k,c_1}-\xi_{k,c_3})$ and then summing over $k=1,\dots,s $ implies 
 \begin{align*}
     (\xi_{\bullet,c_1}-\xi_{\bullet,c_2})^TA(\xi_{\bullet,c_1}-\xi_{\bullet,c_3})<(\xi_{\bullet,c_1}-\xi_{\bullet,c_3})^TA(\xi_{\bullet,c_1}-\xi_{\bullet,c_2}),
 \end{align*}
 which is a contradiction since $A$ is symmetric.
 \end{proof}
 
Rewriting Lemma \ref{lem:ordered+atmost2}, we may search the minimizer $\xi^*$ of $\phi$ among the vectors $\xi$ for which there exists $l \in \{0, 1, \ldots, q-1\}$ such that for any $k = 1, \ldots, s$
\[
\xi_{k,1} = \ldots = \xi_{k,l} > \xi_{k,l+1} = \ldots = \xi_{k,q}.
\]
Recalling \eqref{setminimizers}, for each $k$ there exists $t_k \in [0,1]$ such that the two possible values are given by $|S_k|t_k/(Nl) + |S_k|(1-t_k)/(Nq)$ and $|S_k|(1-t_k)/(Nq)$, i.\,e.\ any minimizer is of the form
\begin{align}\label{formofmin}
 \xi = \xi(t) =\sum_{c=1}^l\frac{St}{Nl}\otimes e_c+ \frac SN(\mathbf 1_s-t)\otimes\tfrac{1}{q}\mathbf 1_q
\end{align}
for some $t \in [0,1]^s$. Next, we show that $l\le 1$, i.\,e.\ only the first coordinate may have a different value from the remaining ones.

\begin{lem}\label{lem:XiDarst}
Any minimizer of $\phi$ is given by
 \begin{align}\label{eq:XiDarst}
 \xi(t)=\frac{S}{N}t\otimes e_1+ \frac SN(\mathbf 1_s-t)\otimes\tfrac{1}{q}\mathbf 1_q
 \end{align}
 for some $t\in [0,1]^s$.
 \end{lem}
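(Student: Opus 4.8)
The plan is to use Lemma~\ref{lem:ordered+atmost2} to reduce to a short list of candidate shapes and then to eliminate all but the one in~\eqref{eq:XiDarst}. By Lemma~\ref{lem:ordered+atmost2} and the discussion around~\eqref{formofmin}, every minimizer $\xi$ of $\phi$ is of ``type~$l$'' for some $l\in\{0,1,\dots,q-1\}$: writing $a_k\ge b_k$ for the (at most) two values occurring in the $k$-th block, we have $\xi_{k,1}=\dots=\xi_{k,l}=a_k$ and $\xi_{k,l+1}=\dots=\xi_{k,q}=b_k$ for every $k$, with $la_k+(q-l)b_k=\abs{S_k}/N$, and by part~(2) of that lemma either $a_k=b_k$ for all $k$ (so $\xi$ is the uniform vector, which is~\eqref{eq:XiDarst} with $t=0$) or $a_k>b_k$ for all $k$. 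Since the shapes with $l\le1$ are exactly those in~\eqref{eq:XiDarst} (with $t_k$ chosen so that $a_k=\abs{S_k}t_k/N+\abs{S_k}(1-t_k)/(Nq)$), it remains only to rule out minimizers of type $l\ge2$ with $a_k>b_k$ for all $k$. I would do this by assuming such a minimizer $\xi$ exists and deriving a contradiction between the second-order minimality condition and the critical equation.

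For the minimality condition: $\phi$ is smooth and $\xi$ is a global minimizer, so the Hessian $H_\phi(\xi)$ in~\eqref{eq:Hessian} is positive semidefinite. I would test it against the directions $d=\delta\otimes(e_1-e_2)$, $\delta\in\IR^s$ arbitrary, with $e_1,e_2\in\IR^q$, i.e.\ shift mass between the two ``large'' colours $1$ and $2$. At the minimizer $w_k=(N/\abs{S_k})\,\xi_{k,\bullet}$, and since $l\ge2$ the first two entries of $w_k$ coincide, so $(e_1-e_2)^Tw_k=0$ and $(e_1-e_2)^T(\operatorname{diag}(w_k)-w_kw_k^T)(e_1-e_2)=2(N/\abs{S_k})a_k$. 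Plugging this into~\eqref{eq:Hessian} and using $\tfrac{AS}{N}e_k(Ae_k)^T=\tfrac{\abs{S_k}}{N}(Ae_k)(Ae_k)^T$, the quadratic form collapses to
\[
0\le d^TH_\phi(\xi)d=2\Big(\delta^TA\delta-\sum_{k=1}^s a_k\,\big((A\delta)_k\big)^2\Big)\qquad\text{for all }\delta\in\IR^s .
\]
Substituting $\eta=A\delta$ (arbitrary in $\IR^s$, as $A$ is invertible) rewrites this as $\eta^TA^{-1}\eta\ge\eta^T\operatorname{diag}(a)\eta$ for all $\eta$, i.e.\ $\operatorname{diag}(a)\preceq A^{-1}$; inverting this relation between positive definite matrices gives $A\preceq\operatorname{diag}(1/a)$, that is $v^TAv\le\sum_{k}v_k^2/a_k$ for all $v\in\IR^s$.

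For the critical equation: subtracting~\eqref{eq:KLS4.3} for a colour $c>l$ from the same identity for a colour $c\le l$ gives $\big(A(a-b)\big)_k=\log(a_k/b_k)$ for every $k$, i.e.\ $A(a-b)=\log(a/b)$ componentwise, where $a=(a_k)_k$, $b=(b_k)_k$ and $0<b_k<a_k$ (positivity because minimizers lie in $C$, cf.~\eqref{setminimizers}). Pairing with $a-b$ and using strict concavity of $\log$, namely $\log(a_k/b_k)=\int_{b_k}^{a_k}t^{-1}\,dt>(a_k-b_k)/a_k$, yields
\[
(a-b)^TA(a-b)=\sum_{k=1}^s(a_k-b_k)\log(a_k/b_k)>\sum_{k=1}^s\frac{(a_k-b_k)^2}{a_k}=(a-b)^T\operatorname{diag}(1/a)(a-b),
\]
which contradicts $A\preceq\operatorname{diag}(1/a)$ applied to $v=a-b$. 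Hence no minimizer of type $l\ge2$ exists, and every minimizer has the form~\eqref{eq:XiDarst}. The one genuinely delicate point — and the place where the hypothesis $l\ge2$ enters — is the choice of test direction in the Hessian step: $\delta\otimes(e_1-e_2)$ is precisely the perturbation whose second-order cost reduces to the matrix inequality $\operatorname{diag}(a)\preceq A^{-1}$ that is dual, via inversion, to the one forced by the mean-field equation; for $l\le1$ the analogous computation only produces a strictly weaker, non-contradictory bound (because then $(e_1-e_2)^Tw_k=a_k/\abs{S_k}-b_k/\abs{S_k}\neq0$).
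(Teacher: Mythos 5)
Your proof is correct, and the ingredients coincide with the paper's, but you reorganize the logic in a way worth noting. The paper's proof is a case split: if $\operatorname{diag}(\xi_{\bullet,1})\le_L A^{-1}$ it views the left side of \eqref{eq:KLS4.3} as a map $g_k(x)=x^TAe_k-\log x_k$ and uses $Dg(x)=A-\operatorname{diag}(x)^{-1}<_L 0$ (a monotonicity argument, equivalent after pairing with $a-b$ to your strict-concavity-of-$\log$ inequality) to conclude $\xi=\xi(0)$; if instead $\operatorname{diag}(\xi_{\bullet,1})\not\le_L A^{-1}$ and $l>1$ it picks a witness $u$ of the indefiniteness and shows $h''(0)=2u^T(A-A\operatorname{diag}(\xi_{\bullet,1})A)u<0$ along $u\otimes(e_1-e_l)$, contradicting minimality. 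You instead run the Hessian step over all directions $\delta\otimes(e_1-e_2)$ to \emph{derive} $\operatorname{diag}(a)\preceq A^{-1}$ as a consequence of second-order optimality at any type-$l\ge 2$ critical point, and then show the critical equation $A(a-b)=\log(a/b)$ together with strict concavity of $\log$ forbids exactly this when $a\neq b$ (the $a=b$ case giving $\xi(0)$ directly). This collapses the paper's two cases into a single contradiction and exposes the duality more sharply, which is a clean alternative presentation. One small misprint in your closing remark: for $l=1$ one has $(e_1-e_2)^Tw_k=\tfrac{N}{|S_k|}(a_k-b_k)$, not $(a_k-b_k)/|S_k|$; this does not affect the argument.
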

 
This replaces \cite[Equation (2.8)]{EW90}, where $t\in[0,1]$. Surprisingly, it is not sufficient to consider values $t=\tilde t \mathbf 1_s$, $\tilde t\in [0,1]$, in \eqref{eq:XiDarst} in order to follow the lines of \cite{EW90}. It will be apparent from Figure \ref{fig:phi} below that minima might appear off the diagonal.
In the sequel, we will use the Loewner order of matrices $X>_LY$ (or $X\ge_L Y$) if $X-Y$ is positive (semi-) definite.

 \begin{proof}
Let us first consider the case $\operatorname{diag}(\xi_{\bullet,1})\le_L A^{-1}$, which is equivalent to $A\le_L\operatorname{diag}(\xi_{\bullet,1})^{-1}$. 
Let us view the left hand side of \eqref{eq:KLS4.3} as the component $g_k(\xi_{\bullet,c})$ for a function $g:\IR^s\to\IR^s$. Then by our assumption, $Dg(x)=A-\operatorname{diag}(x)^{-1}<_L0$ for all $x\in\IR^s$ such that $\operatorname{diag}(x)<_L\operatorname{diag}(\xi_{\bullet,1})$. Hence $g(\xi_{\bullet,c})=g(\xi_{\bullet,1})$ readily implies $\xi_{\bullet,c}=\xi_{\bullet,1}$, i.e. $\xi=\xi(0)$.

On the other hand, let us now assume $\operatorname{diag}(\xi_{\bullet,1})\not\leq_L A^{-1}$. Since $A$ is symmetric and positive definite, this is equivalent to $A-A\operatorname{diag}(\xi_{\bullet,1})A\not\ge_L0$, that is $u^T(A-A\operatorname{diag}(\xi_{\bullet,1})A)u<0$ for some $u\in \IR^s$. Now assume $l>1$ in \eqref{formofmin}. In this case, if $\xi$ is a minimizer of $\phi$, then $x=0$ is a minimum of the function $h(x)=\phi(\xi+xu\otimes(e_1-e_l))$. Contrarily its second derivative is explicitly given by
\begin{align*}
 h''(0)&=(u\otimes (e_1-e_l))^TH_\phi(\xi)(u\otimes (e_1-e_l))\\
 &=2u^TAu-\sum_{k=1}^s\left(u^TAe_k\right)^2(e_1-e_l)^T\left(\operatorname{diag}(\xi_{k,\bullet})-\tfrac{N}{\abs{S_k}}\xi_{k,\bullet}\xi_{k,\bullet}^T\right)(e_1-e_l)\\
 &=2u^TAu-\sum_{k=1}^s \left(u^TAe_k\right)^2\left(\xi_{k,1}+\xi_{k,l}-\tfrac{N}{\abs{S_k}}(\xi_{k,1}-\xi_{k,l})^2\right)\\
  &=2u^T\left(A-A\operatorname{diag}(\xi_{\bullet,1})A\right)u<0.
\end{align*}
Therefore $\xi$ cannot be a minimizer of $\phi$ and the contradiction proves $l\le1$.
 \end{proof}
 
 \begin{figure}[ht]
 \includegraphics[width=.33\textwidth]{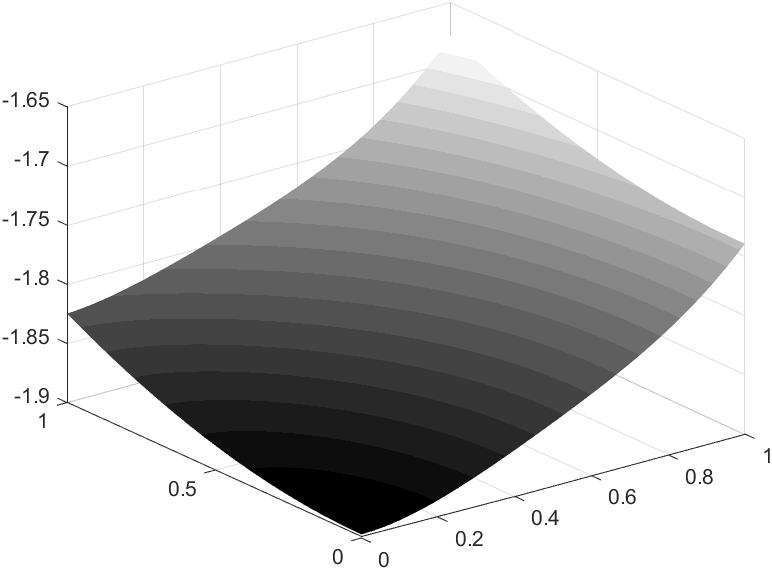}\includegraphics[width=.33\textwidth]{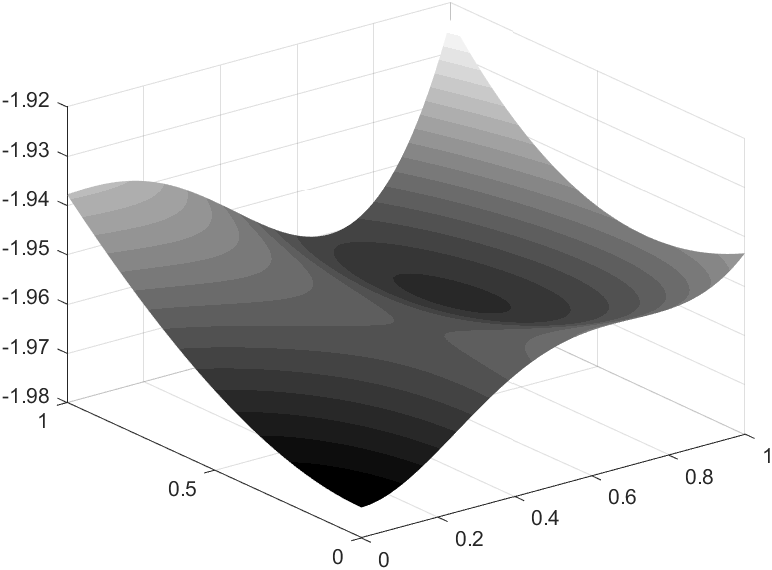}\includegraphics[width=.33\textwidth]{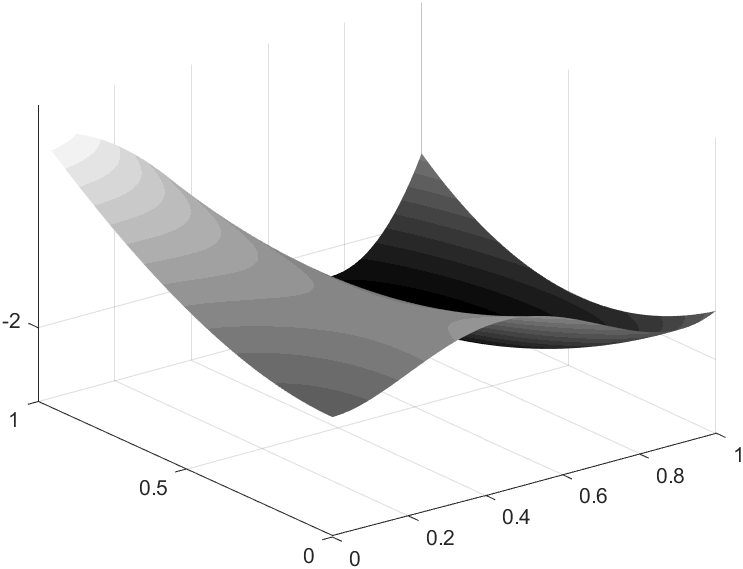}
   \caption{The function $[0,1]^2\ni t\mapsto\phi(\xi(t))$, for $q=5$ colors\protect\footnotemark, $s=2$ blocks of size $\gamma=(1/4,3/4)$ and different regimes of the norm of the interaction matrix: Smaller than $4\tfrac{q-1}q=3.2$ (left), bigger than $\zeta_q=\tfrac 83 \log 4\approx 3.7$ (right) and in between (middle). The left graph clearly shows that for $\lVert\sqrt\Gamma \mathcal A\sqrt\Gamma\rVert=3.1<3.2$, the (global) minimum is attained at $t=0$. If we increase the inverse temperature to $3.2<\lVert\sqrt\Gamma \mathcal A\sqrt\Gamma\rVert=3.65< 3.7$ (center), another local minimum appears at $t\neq 0$. If $\lVert\sqrt\Gamma \mathcal A\sqrt\Gamma\rVert=3.8>\zeta_q$ (right), then the new local minimum becomes global and a CLT around $\tfrac 1 q \mathbf 1_{sq}$ cannot hold.}\label{fig:phi}
   \end{figure}
  \footnotetext{Colors of the Potts model, not in the picture.}

Next, we study the minimizer of $\phi(\xi(t))$ in the regime where the interaction matrix has small norm as illustrated in Figure \ref{fig:phi} on the left. Figure \ref{fig:phi} also exposes the difficulty of the intermediate regime, which is not covered by our results for inhomogeneous block sizes. 

\begin{prop}\label{prop:Minimizer}
If $\norm{\sqrt{\tfrac SN}A\sqrt{\tfrac SN}}_2<4\frac{q-1}{q}$, then $\xi^*= \tfrac{\mathcal S}{qN} \mathbf 1 _{sq}$ is the unique minimizer of $\phi$.
\end{prop}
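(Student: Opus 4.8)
The plan is to combine the structural reductions already in hand—Lemma \ref{lem:XiDarst}, which confines every minimizer to the $s$-parameter family $\xi(t)$, together with the critical equation \eqref{eq:criteqn}—with a Banach fixed-point argument whose contraction constant is governed precisely by $\lVert\sqrt{\tfrac SN}A\sqrt{\tfrac SN}\rVert_2$. Since $\phi$ is smooth on $\IR^{sq}$ and, by \eqref{eq:defR}, attains its minimum, any minimizer $\xi^*$ satisfies $\nabla\phi(\xi^*)=0$, i.e.\ \eqref{eq:criteqn}, and by Lemma \ref{lem:XiDarst} we may write $\xi^*=\xi(t^*)=\tfrac SN t^*\otimes e_1+\tfrac SN(\mathbf 1_s-t^*)\otimes\tfrac1q\mathbf 1_q$ for some $t^*\in[0,1]^s$. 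It therefore suffices to identify all $t^*$ that can arise.

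First I would substitute this ansatz into \eqref{eq:criteqn}. Abbreviating $a:=A\tfrac SN t^*\in\IR^s$, one pulls the common factor $\exp\bigl(\tfrac1q(A\tfrac SN\mathbf 1_s)_k\bigr)$ out of numerator and denominator in block $k$ and then multiplies through by $\exp(a_k/q)$; all $q$ equations of the block collapse to the single scalar identity
\[
t^*_k=F(a_k),\qquad F(y):=\frac{e^y-1}{e^y+q-1}
\]
(the $c\ge 2$ equations are automatically consistent, since both sides of each block already sum to $|S_k|/N$, and in particular $t^*_k<1$). Hence minimizers of $\phi$ correspond to solutions $t^*$ of the fixed-point equation $t=F\bigl(A\tfrac SN t\bigr)$ with $F$ applied coordinatewise; one solution is $t^*=0$, which gives $\xi(0)=\tfrac SN\mathbf 1_s\otimes\tfrac1q\mathbf 1_q=\tfrac{\mathcal S}{qN}\mathbf 1_{sq}$, and everything reduces to proving this solution is unique.

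The key step is to symmetrize. Writing $a=A\tfrac SN t$ identifies solutions $t$ bijectively with fixed points $a$ of $\Phi(a):=A\tfrac SN F(a)$; conjugating by the diagonal matrix $\sqrt{S/N}$, the map $G(w):=\sqrt{S/N}\,\Phi\bigl(\sqrt{N/S}\,w\bigr)$ has Jacobian
\[
G'(w)=\sqrt{\tfrac SN}A\sqrt{\tfrac SN}\,\operatorname{diag}\!\bigl(F'\bigl((\sqrt{N/S}\,w)_k\bigr)\bigr),
\]
since the diagonal factors $\sqrt{S/N}$ absorb on both sides of $\operatorname{diag}(F'(\cdot))$. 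An elementary optimization gives $0\le F'(y)=\tfrac{qe^y}{(e^y+q-1)^2}\le\tfrac{q}{4(q-1)}$ for all $y$, the maximum being attained at $e^y=q-1$; hence, by submultiplicativity of the operator norm with a diagonal factor,
\[
\lVert G'(w)\rVert_2\le\Bigl\lVert\sqrt{\tfrac SN}A\sqrt{\tfrac SN}\Bigr\rVert_2\cdot\frac{q}{4(q-1)}=:\kappa<1
\]
for every $w\in\IR^s$, by the hypothesis. Thus $G\colon\IR^s\to\IR^s$ is a $\kappa$-Lipschitz contraction, so by Banach's fixed-point theorem it has a unique fixed point; since $F(0)=0$ forces $G(0)=0$, that fixed point is $w=0$, whence $a=0$, $t^*=F(0)=0$, and $\xi^*=\xi(0)=\tfrac{\mathcal S}{qN}\mathbf 1_{sq}$ is the unique minimizer of $\phi$.

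The main obstacle here is conceptual rather than computational: one has to recognize that conjugation by $\sqrt{S/N}$ is exactly the right manoeuvre—it replaces the non-symmetric matrix $A\tfrac SN\operatorname{diag}(F'(\cdot))$, whose norm is not visibly small, by $\sqrt{\tfrac SN}A\sqrt{\tfrac SN}\operatorname{diag}(F'(\cdot))$, whose norm is controlled by the spectral quantity in the hypothesis—and that the resulting threshold $4\tfrac{q-1}{q}$ is precisely $\bigl(\sup_y F'(y)\bigr)^{-1}$. Everything else (the reduction of \eqref{eq:criteqn} on the slice $\{\xi(t)\}$, the elementary bound on $F'$, the invocation of Banach's theorem) is soft. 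As a sanity check, one may verify that the Hessian of $t\mapsto\phi(\xi(t))$ at $t=0$ is already positive definite under the strictly weaker condition $\lVert\sqrt{\tfrac SN}A\sqrt{\tfrac SN}\rVert_2<q$ (note $4\tfrac{q-1}{q}\le q$), so that it is the \emph{global} uniqueness of the critical point, not local minimality at $0$, that forces the stronger threshold—in line with the remark that additional local minima of $\phi$ emerge once $\lVert\sqrt\Gamma\mathcal A\sqrt\Gamma\rVert_2$ exceeds $4\tfrac{q-1}{q}$.
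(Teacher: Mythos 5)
Your proposal is correct and follows essentially the same route as the paper: reduction to the family $\xi(t)$ via Lemma \ref{lem:XiDarst}, the fixed-point equation $t=F\bigl(A\tfrac SN t\bigr)$ (your $F$ is exactly the paper's auxiliary map $h$), symmetrization by conjugating with $\sqrt{S/N}$, the bound $\sup_y F'(y)=\tfrac{q}{4(q-1)}$, and the Banach fixed-point theorem with the same contraction constant. The only cosmetic differences are that you derive the fixed-point equation directly from the critical equation \eqref{eq:criteqn} rather than from $\nabla(\phi\circ\xi)(t)=0$, and that you pass to the variable $a=A\tfrac SN t$ before conjugating.
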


\begin{proof}
By Lemma \ref{lem:XiDarst} it remains to study the minima of $\Phi(t)=\phi(\xi(t))$ as a function of $t\in[0,1]^s$. We have
\begin{align*}
\Phi (t)=& \tfrac 1 2 \xi(t)^T(A\otimes  I_q)\xi(t)-\sum_{k=1}^s\frac{\abs{S_k}} N \log\left[\sum_{c=1}^q\exp(\xi(t)^T(Ae_k\otimes e_c))\right]\\
=&\tfrac 1 2 t^T \frac{SAS}{N^2}t+t^T\frac{SAS}{N^2}(\mathbf 1_s-t)\tfrac 1 q+\tfrac 1 2(\mathbf 1_s-t)^T\frac{SAS}{N^2}(\mathbf 1_s-t)\tfrac 1 q\\
&-\sum_{k=1}^s\frac{\abs{S_k}} N \log\left[ \exp\Big(\big(\tfrac 1 q\mathbf 1_s+\tfrac{q-1}{q} t\big)^T\tfrac{SA}Ne_k\Big)+(q-1)\exp\Big(\tfrac 1 q\big(\mathbf 1_s-t)^T\tfrac{SA}{N}e_k\big)\Big)  \right]\\
=&\frac{q-1}{2q} t^T\frac{SAS}{N^2}t +\frac{1}{2q}\mathbf 1_s^T \frac{SAS}{N^2}\mathbf 1_s\\
&-\sum_{k=1}^s\frac{\abs{S_k}} N \left( \tfrac 1 q\mathbf 1^T_s \frac {SA}N e_k+ \tfrac{q-1}q t^T\tfrac{SA}{N}e_k+\log\left[1+(q-1)\exp\big(-t^T\tfrac{SA}{N}e_k\big)\right]\right)\\
=&\frac{q-1}{2q} t^T\frac{SAS}{N^2}t -\frac{1}{2q}\mathbf 1_s^T \frac{SAS}{N^2}\mathbf 1_s-\frac{q-1}q t^T\frac{SAS}{N^2}\mathbf 1_s-\sum_{k=1}^s\frac{\abs{S_k}} N \log\left[1+(q-1)\exp\big(-t^T\tfrac{SA}{N}e_k\big)\right].
\end{align*}
In this form, we easily obtain the derivative
\begin{align*}
    \nabla \Phi(t)=&\frac{q-1}q\frac{SAS}{N^2}(t-\mathbf 1_s)+\sum_{k=1}^s\frac{\abs{S_k}}N \frac{(q-1)\tfrac{SA}Ne_k\exp\big(-t^T\tfrac{SA}Ne_k\big) }{1+(q-1)\exp\big(-t^T\tfrac{SA}Ne_k\big)}\\
    =&\sum_{k=1}^s\frac{q-1}{q}\frac{SAS}{N^2}e_k\left(\frac{1+(q-1)t_k-(1-t_k)\exp\big(t^T\tfrac{SA}Ne_k\big) }{(q-1)+\exp\big(t^T\tfrac{SA}Ne_k\big)}\right)\\
    =&\frac{q-1}{q}\frac{SAS}{N^2}\left(\sum_{k=1}^s\frac{q e_k}{(q-1)+\exp\big(t^T\tfrac{SA}Ne_k\big)}-\big(\mathbf1_s-t\big)\right)
\end{align*}
Similar to \eqref{eq:criteqn}, a critical point satisfies $\nabla\Phi=0$ and it follows\footnote{Notice that $\nabla \Phi$ is the generalization of the derivative $A_\beta'$ in \cite[Equation (5.4)]{EW90} and the following equation \eqref{eq:crit_t} corresponds to \cite[Equation (2.9)]{EW90}.}
\begin{align}\label{eq:crit_t}
    t=\mathbf1_s -\sum_{k=1}^s\frac{q e_k}{(q-1)+\exp\Big(t^T\tfrac{SA}Ne_k\Big)}=:h(t),
\end{align}
hence we are looking for a fixed point of the auxiliary function $h$. The most obvious candidate is the fixed point $t=0$, which we shall prove to be unique. The Jacobian of $h$ is given by
\begin{align*}
    Dh(t)= \sum_{k=1}^s\frac{q e_k\big(\tfrac{SA}Ne_k\big)^T\exp\big(t^T\tfrac{SA}Ne_k\big)} {\Big((q-1)+\exp\big(t^T\tfrac{SA}Ne_k\big)\Big)^2}
    = \sum_{k=1}^s\frac{q e_ke_k^T \exp\big(t^T\tfrac{SA}Ne_k\big)} {\Big((q-1)+\exp\big(t^T\tfrac{SA}Ne_k\big)\Big)^2}A\frac SN.
\end{align*}

The fixed point $t=0$ is unique for $h$ if and only if it is the unique fixed point for $\tilde h(t)=(\tfrac SN)^{1/2}h((\tfrac SN)^{-1/2}t)$. The latter follows from the Banach fixed point theorem, if the spectral norm of the Jacobian 
\begin{align*}
       D\tilde h (t) =(\tfrac SN)^{1/2}(Dh)((\tfrac SN)^{-1/2}t)(S/N)^{-1/2}= \sum_{k=1}^s\frac{q e_ke_k^T \exp\Big(t^T\sqrt{\tfrac{S}N}Ae_k\Big)} {\Big((q-1)+\exp\Big(t^T\sqrt{\tfrac{S}N}Ae_k\Big)\Big)^2}\sqrt{\frac{S}N}A\sqrt{\frac{S}N}
\end{align*}
 is smaller than 1 for all $t$. A simple analysis shows that the function $x\mapsto q\exp(x)/(q-1+\exp(x))^2$ has its maximum at $x=\log(q-1)$ with value $q/(4(q-1))$.\footnote{For some choice of $S$ and $A$ this maximum can also be attained.} Therefore, the spectral norm is bounded by
\begin{align*}
    \norm{D\tilde h(t)}_2\le\frac{q}{4(q-1)}\norm{\sqrt{S}A\sqrt{S}/N}_2<1,
\end{align*}
where the first inequality holds since the first matrix is diagonal and the second inequality follows from our assumption. The Banach fixed point theorem implies that the fixed point $t=0$ is unique and the claim follows from Lemma \ref{lem:XiDarst}.
\end{proof}

Note that the vector $\xi^*=\tfrac{\mathcal S}{qN}\mathbf 1_{sq}$ is always a critical point. Moreover, it follows from Lemma \ref{lem:Minimizer} and Proposition \ref{prop:Minimizer} that for equal block sizes or in the situation of Theorem \ref{thm:CLTgeneral}, it is a global minimizer.

Next, we verify that the Hessian of $\phi$ is positive definite at $\xi^*$ as well as in some neighbourhood which can be chosen uniformly for $N$ sufficiently large, cf. \cite[Lemma 3.4a]{EW90}.

\begin{lem}\label{lem:posdef}
If $\lVert \sqrt \Gamma \mathcal A \sqrt \Gamma\rVert_2<q$, then for $N$ sufficiently large there exists a neighbourhood $B_r(\xi^*)$ of $\xi^*=\tfrac{\mathcal S}{qN}\mathbf 1_{sq}$, where the Hessian of $\phi$ is positive definite, i.e. there exists $r,\lambda_{\min}>0$ such that
\begin{align}\label{eq:posdef}
\xi^TH_\phi(\tilde\xi)\xi>\tfrac 1 2 \lambda_{\min}\norm\xi^2 
\end{align}
for all $\xi\in\IR ^{sq},\tilde\xi\in B_r(\xi^*)$. 
\end{lem}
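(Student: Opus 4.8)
The plan is to read off positive definiteness of $H_\phi$ at $\xi^*$ directly from the block representation \eqref{eq:Hessian} by an estimate in the Loewner order, and then to propagate this bound to a fixed ball around $\xi^*$ and to all large $N$ by soft continuity arguments.

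Concretely, I would first rewrite \eqref{eq:Hessian} as $H_\phi(\xi)=\mathcal A-\mathcal A\,D(\xi)\,\mathcal A$ with
\begin{align*}
D(\xi):=\sum_{k=1}^s(e_ke_k^T)\otimes\tfrac{\abs{S_k}}{N}\big(\operatorname{diag}(w_k(\xi))-w_k(\xi)w_k(\xi)^T\big),
\end{align*}
where $w_k(\xi)\in\IR^q$ is the (softmax) probability vector introduced just before \eqref{eq:Hessian}. Then $D(\xi)$ is block-diagonal and positive semidefinite, since each block $\operatorname{diag}(w_k)-w_kw_k^T$ is the covariance matrix of a categorical law; moreover $\operatorname{diag}(w_k)-w_kw_k^T\le_L\operatorname{diag}(w_k)\le_L(\max_c w_{k,c})I_q$, so that $0\le_L D(\xi)\le_L\big(\max_{k,c}w_{k,c}(\xi)\big)\tfrac{\mathcal S}{N}$. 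At $\xi=\xi^*=\tfrac{\mathcal S}{qN}\mathbf 1_{sq}$, and likewise at the limiting point $\xi^*_\infty:=\tfrac{\Gamma}{q}\mathbf 1_{sq}$, all exponents defining $w_k$ are independent of the color $c$, hence $w_k(\xi^*)=\tfrac1q\mathbf 1_q$ and $\max_{k,c}w_{k,c}(\xi^*)=1/q$.

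Since $\xi\mapsto w_k(\xi)$ is continuous and does not depend on $N$, for any prescribed $\varepsilon>0$ there is a radius $r>0$, independent of $N$, with $\max_{k,c}w_{k,c}(\xi)\le(1+\varepsilon)/q$ on $B_r(\xi^*_\infty)$; for $N$ large we also have $\xi^*\in B_{r/2}(\xi^*_\infty)$. For $\tilde\xi$ in this ball, conjugating $D(\tilde\xi)\le_L\tfrac{1+\varepsilon}{q}\tfrac{\mathcal S}{N}$ by $\mathcal A^{1/2}=A^{1/2}\otimes I_q$ gives
\begin{align*}
H_\phi(\tilde\xi)=\mathcal A^{1/2}\Big(I_{sq}-\mathcal A^{1/2}D(\tilde\xi)\mathcal A^{1/2}\Big)\mathcal A^{1/2}\ \ge_L\ \Big(1-\tfrac{1+\varepsilon}{q}\norm{\sqrt{S}A\sqrt{S}/N}_2\Big)\mathcal A\ \ge_L\ \Big(1-\tfrac{1+\varepsilon}{q}\norm{\sqrt{S}A\sqrt{S}/N}_2\Big)\lambda_{\min}(A)\,I_{sq},
\end{align*}
where I used $\norm{\mathcal A^{1/2}\tfrac{\mathcal S}{N}\mathcal A^{1/2}}_2=\norm{\sqrt{S}A\sqrt{S}/N}_2$ and $\lambda_{\min}(\mathcal A)=\lambda_{\min}(A)>0$. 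As $\norm{\sqrt{S}A\sqrt{S}/N}_2\to\norm{\sqrt\Gamma\mathcal A\sqrt\Gamma}_2<q$ by hypothesis, we may fix $\varepsilon>0$ small and then $N$ large so that the scalar prefactor exceeds some $\kappa>0$; this gives $H_\phi(\tilde\xi)\ge_L\kappa\lambda_{\min}(A)I_{sq}$ for all $\tilde\xi$ in the ball $B_{r/2}(\xi^*)\subset B_r(\xi^*_\infty)$ (whose radius does not depend on $N$), which is \eqref{eq:posdef} with a suitable $\lambda_{\min}>0$.

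The only point needing attention is that the radius $r$ and the constant $\lambda_{\min}$ be uniform in $N$, but this is immediate: $w_k$ is independent of $N$, the scalars $\abs{S_k}/N$ stay in $[0,1]$, and $\xi^*$ and $\mathcal S/N$ converge to $\xi^*_\infty$ and $\Gamma$. The one genuinely quantitative ingredient is the elementary Loewner bound $\operatorname{diag}(w_k)-w_kw_k^T\le_L(\max_c w_{k,c})I_q$ together with the fact that the largest softmax coordinate equals \emph{exactly} $1/q$ at the symmetric critical point; this is what turns the spectral hypothesis $\norm{\sqrt\Gamma\mathcal A\sqrt\Gamma}_2<q$ into strict positivity of the Hessian and is therefore the crux of the argument. (The analogous, more hands-on, computation in the one-block case is \cite[Lemma 3.4a]{EW90}.)
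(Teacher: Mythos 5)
Your proof is correct. Both you and the paper start from the Hessian formula \eqref{eq:Hessian} and exploit the same key observation that at the symmetric critical point one has $w_k(\xi^*)=\tfrac 1q\mathbf 1_q$, so the softmax covariances collapse to $\tfrac1q(I_q-\tfrac1q\mathbf 1_{q\times q})$. From there the routes diverge: the paper evaluates $H_\phi(\xi^*)$ explicitly as $A\otimes I_q - \tfrac{ASA}{qN}\otimes(I_q-\tfrac1q\mathbf 1_{q\times q})$, transforms positive definiteness into the equivalent Loewner inequality $\tfrac{\sqrt{S}A\sqrt{S}}{qN}\otimes(I_q-\tfrac1q\mathbf 1_{q\times q})<_L I_{sq}$ via conjugation, and finishes by invoking the fact that the Kronecker product preserves the Loewner order \cite[Theorem~2.5]{B89} together with $I_q-\tfrac1q\mathbf 1_{q\times q}\le_L I_q$; the neighborhood is then obtained by continuity of $H_\phi$. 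You instead control the block-diagonal factor $D(\tilde\xi)$ uniformly on a ball through the elementary scalar majorant $\operatorname{diag}(w_k)-w_kw_k^T\le_L(\max_c w_{k,c})I_q$ and then conjugate by $\mathcal A^{1/2}$, using that $\lVert\mathcal A^{1/2}\tfrac{\mathcal S}{N}\mathcal A^{1/2}\rVert_2=\lVert\sqrt SA\sqrt S/N\rVert_2$. Your version dispenses with the external Kronecker--Loewner citation, delivers an explicit lower bound on $H_\phi(\tilde\xi)$ valid throughout the ball (so uniformity in $N$ is transparent, since $w_k$ does not depend on $N$), and isolates exactly where the hypothesis $\lVert\sqrt\Gamma\mathcal A\sqrt\Gamma\rVert_2<q$ enters; the paper's version is a shade more explicit about the exact spectral structure of $H_\phi(\xi^*)$ but relies on a soft continuity step at the end. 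Both are sound and both recover the same threshold.
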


In particular, for asymptotically equal block sizes, $\xi^*$ converges to $\frac 1 {q s}\mathbf 1 _{sq}$ and any minimizer $\xi_N^*$ lies in a neighborhood of $\frac 1 {q s}\mathbf 1 _{sq}$ for $N$ large enough by Lemma \ref{lem:Minimizer}. Combining these facts with Lemma \ref{lem:posdef}, we see that for $N$ large enough, $\xi^*$ and the so far unspecified vectors $\xi_N^*$ from Lemma \ref{lem:Minimizer} are minimizers which all lie in an area of positive definite Hessian. This readily implies uniqueness of the minimizer in some ball $B_R(0)$. Choosing $R$ sufficiently large (cf.\ \eqref{eq:defR}) it follows that the unique minimizer of $\phi$ is indeed given by $\xi^*=\tfrac{\mathcal S}{qN}\mathbf 1_{sq}$ under the assumptions from Theorem \ref{thm:CLT}.

\begin{proof}
The Hessian is given by \eqref{eq:Hessian}, where we already saw that at the critical point $\xi^*=\tfrac{\mathcal S}{qN}\mathbf 1_{sq}$ we may plug in the critical equation \eqref{eq:criteqn}. Thus, in terms of the Loewner order, we have the equivalence
\begin{align}
 H_\phi(\xi^*)&=\mathcal A\left[ I_{sq}-\sum_{k=1}^s(e_k e_k^T)\otimes\left(\operatorname{diag}(\xi^*_{k,\bullet}) -\tfrac{N}{\abs{S_k}}\xi^*_{k,\bullet}{\xi^*_{k,\bullet}}^T\right)\mathcal A\right]\nonumber\\
&=A\otimes I_{q}-\frac{ASA}{q N}\otimes(I_q-\tfrac 1 q\mathbf 1_{q\times q})>_L0\label{eq:Hessianexplicit}\\
&\Leftrightarrow\frac{\sqrt SA\sqrt S}N\otimes I_{q}-\frac{\sqrt S ASA\sqrt S}{q N^2}\otimes(I_q-\tfrac 1 q\mathbf 1_{q\times q})>_L0\nonumber\\
&\Leftrightarrow\frac{\sqrt S A\sqrt S}{q N}\otimes(I_q-\tfrac 1 q\mathbf 1_{q\times q})<_L I_s\otimes I_q.\label{eq:LoewnerEquivalence}
\end{align}
Similarly to the proof of Lemma \ref{lem:XiDarst}, these equivalences hold since all the occurring matrices are symmetric and positive definite. For example the eigenvectors of $A_{\alpha,\beta}$ are $v_1=\mathbf 1_s$ with eigenvalue $\lambda_1=\beta+(s-1)\alpha$ and $v_k=e_k-e_{k-1}$ for $1<k\le s$ having eigenvalue $\lambda_k=\beta-\alpha>0$. In the same notation, they are also the eigenvectors of $I_q-\tfrac 1 q\mathbf 1 _{q\times q}$ (corresponding to $s\leftrightarrow q$, $\beta=(q-1)/q$ and $\alpha=-1/q$) to the eigenvalues $0$ and $1$.
In other words we have $0\le_L I_q-\tfrac 1 q\mathbf 1 _{q\times q}\le_L I_q$. By \cite[Theorem 2.5]{B89} it holds that the Kronecker product preserves the Loewner order. Applied to our setting, it remains to show $\frac{\sqrt S A\sqrt S}{Nq}<_LI_s$, since then \eqref{eq:LoewnerEquivalence} follows from the Kronecker product with $I_q-\tfrac 1 q\mathbf 1 _{q\times q}\le_L I_q$. Recall our assumption $\lVert\frac{\sqrt S A\sqrt S}{N}\rVert_2\le q$ hence $\sqrt SA\sqrt S/N<_LqI_s$ holds.
Thus we have shown $H_\phi(\xi^\ast)$ is positive definite with minimal eigenvalue $\lambda_{\min} >0$. By continuity of $H_\phi$, there exists $r>0$ such that \eqref{eq:posdef} follows.
\end{proof}


\section{Proofs of the theorems}\label{sec:proofs}

We will prove the Central Limit Theorems via convergence of the moment-generating function.\footnote{The tail bound is even more immediate if one uses characteristic functions instead, but we will need moment-generating functions for large deviation considerations anyway.}
\begin{proof}[Proof of Theorem \ref{thm:CLT} and Theorem \ref{thm:CLTgeneral}]
According to the previous section, we assume that $\xi^\ast(=\frac{\mathcal S }{qN}\mathbf 1_{sq})$ is the unique minimizer of $\phi$, and the Hessian $H_\phi$ is positive definite in some neighborhood. Applying Lemma \ref{lem:phi} for $\theta=1/2$,  $v=\big(\frac {\mathcal S}N\big)^{-1} \xi^*(=\tfrac 1 q\mathbf 1 _{sq})$ and $Z\sim \mathcal N (0,N(\sqrt{ \mathcal S}\mathcal A\sqrt{ \mathcal S})^{-1})$ implies for all $t\in\IR ^{sq}$
\begin{align}
 &\int \exp\left[ t^T\big(Z+ \sqrt{\mathcal S}(m-v)\big)\right]d\IP\nonumber\\
 =&c_N \int \exp\left[- N \phi\left(\xi ^*+\sqrt{\frac{\mathcal S}{ N}}\frac x {\sqrt N}\right)+t^Tx\right]d^{sq}x\nonumber\\
 =&c_Ne^{-N\phi^*} \int \exp\left[- N\big(\phi(\xi)-\phi^*\big)+t^TN\sqrt {\mathcal S ^{-1}}(\xi-\xi^*)\right]\abs{\det(\sqrt{\mathcal S}/N) }^{-1}d^{sq}\xi,\label{eq:MEF}
\end{align}
where we used a change of variables $\xi=\xi ^*+\sqrt{\mathcal S/ N}x /{\sqrt N}$ and complemented the minimum $\phi^*=\phi(\xi^*)$ of $\phi$ in the exponent. Note that the Jacobian is of order $\sim N^{sq/2}$. For $r>0$ from Lemma \ref{lem:posdef}, we discuss the integral over the areas $B_r(\xi^*)$ and its complement separately. The latter can be bounded as follows. Since $\xi\in B^c_r(\xi^\ast)$ is not the global minimizer of $\phi$, we have $\phi(\xi)\ge\phi^*-\delta$ for some $\delta>0$. We choose $R>0$ from \eqref{eq:defR} and possibly enlarge it so that $\norm t \le R$ in order to estimate
\begin{align}\label{eq:Flanken}
&\int_{B^c_r(\xi^*)} \exp\left[- N\big(\phi(\xi)-\phi^*\big)+t^TN\sqrt {\mathcal S ^{-1}}(\xi-\xi^*)\right]d^{sq}\xi\\
&\le \int_{B^c_r(\xi^*)\cap B_R(0)} \exp\left[-N\delta+c\sqrt N R^2\right]d^{sq}\xi+e^{N\phi^\ast}\int_{B^c_r(\xi^*)\cap B^c_R(0)} \exp\left[-\frac N 3 \xi^T\mathcal A\xi+ c \sqrt N\norm \xi ^2\right]d^{sq}\xi\nonumber\\
&\le Ce^{-\epsilon N}.\nonumber
\end{align}
for some $\epsilon,c,C>0$ and $N$ sufficiently large. The last step follows from $\phi^*\le \phi(0)=-s\log q<0$ and $\mathcal A$ being positively definite. We also see that $\epsilon$ does depend on $\mathcal A$ (or more precisely on $\abs{\beta-\alpha}$ for $A=A_{\alpha,\beta}$), whereas $C$ is independent of it.

The major contribution comes from the local part of $\xi\in B_r(\xi^*)$, where we use a second order Taylor approximation with Lagrange remainder, i.e. there exists an intermediate point $\tilde\xi\in B_r(\xi^*)$ between $\xi$ and $\xi ^*$ such that
\begin{align}\label{eq:Taylor}
\phi(\xi)=\phi(\xi^*)+(\xi-\xi^*)^T\nabla\phi(\xi^*)+\tfrac 1 2 (\xi-\xi^*) ^T H_\phi (\tilde\xi)(\xi-\xi^*).
\end{align}
Note that Lemma \ref{lem:Minimizer} or Proposition \ref{prop:Minimizer} respectively yields $\phi(\xi^*)=\phi^*$ and $\nabla\phi(\xi^*)=0$, where $\lVert\sqrt SA\sqrt S/N\rVert_2<4\frac{q-1}q$ holds for $N$ sufficiently large. After plugging this into \eqref{eq:MEF} and undoing the change of variables, we arrive at
\begin{align}\label{eq:LimitMGF}
&\lim_{N\to\infty} \int \exp\left[ t^T\big(Z+ \sqrt{\mathcal S}(m-v)\big)\right]d\IP\nonumber\\
 &=\lim_{N\to\infty}c_N  e^{- N\phi^*}\left(\int_{\sqrt{N\mathcal S ^{-1}}B_{r\sqrt N}(0)} \exp\left[-\tfrac 1 2 x^T\sqrt{\frac{\mathcal S}N} H_\phi (\tilde\xi)\sqrt{\frac{\mathcal S}N}x+t^Tx\right]d^{sq}x+\mathcal O (e^{-\epsilon N})\right)\nonumber\\
 &=\lim_{N\to\infty}\left(c_N  e^{- N\phi^*}\left(\sqrt{(2\pi)^{sq}\det(\Theta)}+\mathcal O (e^{-\epsilon N})\right)\right)\int \exp\left[t^Tx\right]d\mathcal N(0,\Theta)(x),
 \end{align}
where $\Theta = (\sqrt{\Gamma} H_\phi (\xi^*)\sqrt{ \Gamma})^{-1}$. The last step follows from the dominated convergence theorem with an integrable majorant that exists due to \eqref{eq:posdef} and pointwise convergence holds, since the intermediate point converges $\tilde\xi\to\xi^*\to\Gamma\tfrac 1 q \mathbf 1_{sq}$ as $N\to\infty$ for each fixed $x$. In particular we obtain the normalizing constant from setting $t=0$ 
\begin{align*}
 \lim_{N\to\infty}c_N  e^{-N\phi^*}=\sqrt{\det(\Theta^{-1})/(2\pi)^{sq}}.
 \end{align*}
 
The moment-generating function of $Z$ is given by $t\mapsto \exp[\tfrac 12 t^TN(\sqrt{ \mathcal S}\mathcal A\sqrt{ \mathcal S})^{-1}t]$, which converges to $\exp[\tfrac 12 t^T(\sqrt{ \Gamma}\mathcal A\sqrt{\Gamma})^{-1}t]$ as $N\to\infty$. Since $Z$ is independent from $m$, the moment generating functions factorize and we conclude our claim
\begin{align*}
 \lim_{N\to\infty}\IE\exp[ t^T \sqrt{\mathcal S}(m-N\mathcal S ^{-1}\xi^*)]=\exp\left[\tfrac 1 2 t^T \left( \Gamma^{-1/2} (\lim_NH_\phi (\xi^*)^{-1}-\mathcal A^{-1})\Gamma^{-1/2}\right)t\right].
\end{align*}
The explicit representation of the Hessian has been evaluated in \eqref{eq:Hessianexplicit} already, hence
\begin{align*}
    \lim_{N\to\infty} H_\phi(\xi^*) = A\otimes I_{q}-\frac{A\operatorname{diag}(\gamma) A}{q}\otimes(I_q-\tfrac 1 q\mathbf 1_{q\times q}).
\end{align*}
In the case of asymptotically equal block sizes $\gamma=1/s$, we have $\xi^*=\frac 1 {sq}\mathbf 1 _{sq}$, $\Gamma=\frac 1 s I_{sq}$.
\end{proof}

Let us now derive the rotated CLT, Theorem \ref{thm:CLTrot}, from the CLT's with degenerate multivariate Gaussian limit distribution.

\begin{proof}[Proof of Theorem \ref{thm:CLTrot}]
The matrix $\tilde R\in SO(q)$ is given by
\begin{align*}
    \tilde R=\left(\begin{matrix}
    1-\frac{1}{q+\sqrt q} &-\frac{1}{q+\sqrt q} & \cdots & -\frac{1}{q+\sqrt q} & -\frac{1}{\sqrt q} \\
        -\frac{1}{q+\sqrt q} &1-\frac{1}{q+\sqrt q} & \ddots & \vdots& \vdots \\
        \vdots & \ddots& \ddots &  -\frac{1}{q+\sqrt q} &\vdots \\
        -\frac{1}{q+\sqrt q} & \cdots & -\frac{1}{q+\sqrt q} &1-\frac{1}{q+\sqrt q} & -\frac{1}{\sqrt q} \\
        \frac 1{\sqrt q}&\cdots&\cdots&\cdots&\frac 1 {\sqrt q}
\end{matrix}\right).
\end{align*}
By the definition of $\mathcal S$ it follows from Theorem \ref{thm:CLT} or Theorem \ref{thm:CLTgeneral} that 
\begin{align*}
\hat m=\mathcal  R\sqrt{\mathcal S}(m-\tfrac 1 q\mathbf 1 _{sq})\Rightarrow \tilde W
\end{align*}
where we applied the continuous mapping theorem. The limit $\tilde W$ has Gaussian distribution with covariance matrix $\mathcal  R\Sigma\mathcal  R^T$. Using the Kronecker product structure $\mathcal A=A\otimes I_q$ and $\Gamma=\operatorname{diag}(\gamma)\otimes I_q$, we rewrite
\begin{align*}
\Sigma&=\big(\sqrt\Gamma\mathcal A\sqrt\Gamma\big)^{-1}\left(\Big(I_{sq}+\sqrt\Gamma\mathcal A\sqrt\Gamma\big( I_s\otimes(\tfrac 1{q^2}\mathbf 1_{q\times q}-\tfrac 1 q I_q)\big)\Big)^{-1}-I_{sq}\right)\\
&=\big(\sqrt\Gamma\mathcal A\sqrt\Gamma\big)^{-1}\Big(I_{sq}+\sqrt\Gamma\mathcal A\sqrt\Gamma\big( I_s\otimes(\tfrac 1{q^2}\mathbf 1_{q\times q}-\tfrac 1 q I_q)\big)\Big)^{-1}\sqrt\Gamma\mathcal A\sqrt\Gamma\big( I_s\otimes(\tfrac 1 q I_q-\tfrac 1{q^2}\mathbf 1_{q\times q})\big)
\\
&=\left(qI_{sq}-\sqrt \Gamma\mathcal A  \sqrt\Gamma\left(I_s\otimes\big(I_q-\tfrac 1 q\mathbf 1_{q\times q}\big)\right)\right)^{-1}\cdot\left(I_s\otimes\left(I_q-\tfrac 1 q\mathbf 1_{q\times q}\right)\right)
\end{align*}
From $I_s\otimes\left(I_q-\tfrac 1 q\mathbf 1_{q\times q}\right)\cdot \mathcal R^T= \mathcal R^T$ it follows 
\begin{align*}
&\mathcal  R\Sigma\mathcal  R^T\cdot\left(qI_{s(q-1)}-\operatorname{diag(\gamma)}^{1/2}A \operatorname{diag(\gamma)}^{1/2}\otimes I_{q-1}\right)\\
&=\mathcal R \left(qI_{sq}-\sqrt\Gamma\mathcal A \sqrt\Gamma\left(I_s\otimes\big(I_q-\tfrac 1 q\mathbf 1_{q\times q}\big)\right)\right)^{-1}\mathcal R^T\cdot\left(qI_{s(q-1)}-\operatorname{diag(\gamma)}^{1/2}A \operatorname{diag(\gamma)}^{1/2}\otimes I_{q-1}\right)\\
&=\mathcal R\left(qI_{sq}-\sqrt\Gamma\mathcal A \sqrt\Gamma\left(I_s\otimes\big(I_q-\tfrac 1 q\mathbf 1_{q\times q}\big)\right)\right)^{-1}\cdot\left(q\mathcal R ^T-\sqrt\Gamma\mathcal A \sqrt\Gamma\left(I_s\otimes\big(I_q-\tfrac 1 q\mathbf 1_{q\times q}\big)\right)\mathcal R^T\right)\\
&=\mathcal R\mathcal R ^T=I_{s}\otimes I_{q-1}=I_{s(q-1)}.
\end{align*}
On the other hand it holds $\mathcal R\mathcal R ^T=I_{q-1}\otimes I_s=I_{(q-1)s}$, and thus we have actually shown that the covariance matrix is given by
\begin{align}
\mathcal  R\Sigma\mathcal  R^T&=\left(qI_{s(q-1)}-\operatorname{diag(\gamma)}^{1/2}A \operatorname{diag(\gamma)}^{1/2}\otimes I_{q-1}\right)^{-1}\nonumber\\
&=\left(q-\operatorname{diag(\gamma)}^{1/2}A \operatorname{diag(\gamma)}^{1/2}\right)^{-1}\otimes I_{q-1}\label{eq:rotatedcovariance}
\end{align}
and is non-singular. In the case of asymptotically equal block sizes, this equals $(q-A/s)^{-1}\otimes I_{q-1}$.
\end{proof}

At last, we turn to the

\begin{proof}[Proof of Theorem \ref{thm:MDP}]
We follow the steps of the proofs of the CLT's. Let us first consider the usual magnetization and rotate it only in the last step, when it is necessary. We are interested in the asymptotics of the moment generating function at any $t\in\IR^{sq}$, i.e.
 \begin{align*}
  \int \exp\left[ N^{1-2\theta }t^T\big(Z+ \mathcal S^{\theta}(m-v)\big)\right]d\IP=c_N \int\exp\left[- N \phi\left(\xi ^*+\left(\frac{\mathcal S}{ N}\right)^{1-\theta}\frac x { N^{\theta}} \right)+N^{1-2\theta}t^Tx\right]d^{sq}x
 \end{align*}
 for $v=N\mathcal S ^{-1}\xi^*$ and $Z\sim \mathcal N \left(0,N( \mathcal S^{1-\theta}\mathcal A \mathcal S^{1-\theta})^{-1}\right)$. Repeating the tail estimate along the lines of \eqref{eq:Flanken} with the change of variables $\xi=\xi^*+(\mathcal S/N)^{1-\theta}x/N^{\theta}$, we obtain
 \begin{align}\label{eq:Flanken2}
&\int_{B^c_r(\xi^*)} \exp\left[- N\big(\phi(\xi)-\phi^*\big)+t^TN^{1-\theta}(N\mathcal S ^{-1})^{1-\theta}(\xi-\xi^*)\right]d^{sq}\xi\\
&\le \int_{B^c_r(\xi^*)\cap B_R(0)} \exp\left[-N\delta+cN^{1-\theta} R^2\right]d^{sq}\xi+e^{N\phi^\ast}\int_{B^c_r(\xi^*)\cap B^c_R(0)} \exp\left[-\frac N 3 \xi^T\mathcal A\xi+ c N^{1-\theta}\norm \xi ^2\right]d^{sq}\xi\nonumber\\
&\le Ce^{-\epsilon N}\nonumber
\end{align}
  for some $r,R,c,C,\delta,\epsilon>0$. Let us denote $J_N(x)=\left(\frac{\mathcal S}N\right)^{1-\theta}H_\phi (\tilde\xi)\left(\frac{\mathcal S}N\right)^{1-\theta}$ for shorter notation, where $\tilde \xi$ is the intermediate point between $\xi^*$ and $\xi$ in the Lagrange remainder of the Taylor expansion \eqref{eq:Taylor}.
 Analogously to the derivation \eqref{eq:LimitMGF}, we obtain for all $t\in\IR ^{sq}$
\begin{align}
 &\int \exp\left[ N^{1-2\theta }t^T\big(Z+ \mathcal S^{\theta}(m-v)\big)\right]d\IP\nonumber\\
 &\sim c_N  e^{- N\phi^*}\int_{U_N} \exp\left[-\frac {N^{1-2\theta}} 2 x^TJ_N(x)x+N^{1-2\theta}t^Tx\right]d^{sq}x\nonumber\\
 &=c_Ne^{-N\phi^*} \int_{U_N}\exp\left[-\frac {N^{1-2\theta}} 2(x-J_N(x)^{-1} t)^TJ_N(x)(x-J_N(x)^{-1} t)\right]  \exp\left[\frac{N^{1-2\theta}}2t^TJ_N(x)^{-1}t\right]d^{sq}x\label{eq:LimitMGF2}
\end{align}
with $U_N=(\mathcal S/N)^{\theta-1}B_{r N^{\theta}}(0)$. Suppose for a moment that $J_N(x)$ can be replaced by its limit $J_\infty=\lim_N J_N(x)=\Gamma^{1-\theta} H_\phi (\xi^*) \Gamma^{1-\theta}$. Then it would follow from dominated convergence with an integrable majorant from \eqref{eq:posdef} and $ U_N\to\IR^{sq}$ that
\begin{align}\label{eq:LimitMGFresult}
\lim_{N\to\infty}\frac{1}{N^{1-2\theta}}\log \int \exp\left[ N^{1-2\theta }t^T\big(Z+ \mathcal S^{\theta}(m-v)\big)\right]d\IP=\frac 1 2 t^T (\Gamma^{1-\theta} H_\phi (\xi^*) \Gamma^{1-\theta})^{-1}t.
\end{align}
This is immediate the previous case of $\theta =1/2$, because the integrand in \eqref{eq:LimitMGF2} was convergent. In order to justify this limit, we will split the integration area $U_N$ of \eqref{eq:LimitMGF2} into a compact set and the rest. First choose $t=0$, perform a change of variables $y=N^{1/2-\theta}x$ and use dominated convergence with an integrable majorant from \eqref{eq:posdef}, which implies the asymptotic 
\begin{align}\label{eq:normalizingconstant}
c_Ne^{-N\phi^*}N^{(\theta-1/2)sq}= c+o(1).
\end{align}
Hence the normalizing constants are negligible after taking the logarithm and dividing by $N^{1-2\theta}$. It follows from \eqref{eq:posdef} and continuity of $H_\phi$ that $\norm{ H_\phi(\tilde \xi)^{-1}-H_\phi(\xi^*)^{-1}}=o (1)$ uniformly for $\xi\in B_{cN^{-\theta}}(\xi ^*)$, hence we have $J_N(x)^{-1}=J_\infty^{-1}+o(1)$ uniformly for $x$ in a compact set, say $x\in \bar B_\kappa(0)$. Thus,
\begin{align}\label{eq:CompactDomain}
 &\int_{\bar B_\kappa(0)}\exp\left[-\frac {N^{1-2\theta}} 2(x-J_N(x)^{-1} t)^TJ_N(x)(x-J_N(x)^{-1} t)\right] \exp\left[\frac{N^{1-2\theta}}2t^TJ_N(x)^{-1}t\right]d^{sq}x\nonumber\\
 &=\exp\left[\frac{N^{1-2\theta}}2t^T(J_\infty^{-1}+o(1))t\right]\int_{\bar B_\kappa(0)}\exp\left[-\frac {N^{1-2\theta}} 2(x-(J_\infty^{-1}+o(1)) t)^TJ_N(x)(x-(J_\infty^{-1}+o(1)) t)\right] d^{sq}x
\end{align}
with the integral being of order $I_N\asymp (N^{(\theta-1/2)sq})$, similar to \eqref{eq:normalizingconstant}. It follows from \eqref{eq:posdef} and continuity of $H_\phi$ that $\norm{ H_\phi(\tilde \xi)^{-1}-H_\phi(\xi^*)^{-1}}=\mathcal O (1)$ uniformly for $\xi\in B_r(0)$, hence after all we have $J_N(x)^{-1}=J_\infty^{-1}+\mathcal O (1)$ uniformly for $x\in U_N$. Thus we can bound
\begin{align*}
  &\int_{U_N\setminus \bar B_\kappa(0)}\exp\left[-\frac {N^{1-2\theta}} 2(x-J_N(x)^{-1} t)^TJ_N(x)(x-J_N(x)^{-1} t)\right] \exp\left[\frac{N^{1-2\theta}}2t^TJ_N(x)^{-1}t\right]d^{sq}x\\
  &\lesssim\int_{U_N\setminus \bar B_\kappa(0)}\exp\left[-c N^{1-2\theta}\norm{x-J_\infty^{-1}t+t\mathcal O (1)}^2\right] \exp\left[\frac{\varrho}2 N^{1-2\theta} \norm{t}^2\right]d^{sq}x,
\end{align*}
where $c>0$ comes from the positive definiteness shown in Lemma \ref{lem:posdef} and $\varrho=\mathcal O (1)$ is bounded by the spectral radius of $J_N(x)=J_\infty+\mathcal O (1)$. Choosing $\kappa>0$ sufficiently large, it follows
\begin{align*}
 &\int_{U_N\setminus \bar B_\kappa(0)}\exp\left[-c N^{1-2\theta}\norm{x-J_\infty^{-1}t+t\mathcal O (1)}^2\right] \exp\left[\frac{\varrho}2 N^{1-2\theta} \norm{t}^2\right]d^{sq}x\\
 &\lesssim \int_{U_N\setminus \bar B_\kappa(0)}\exp\left[-\frac c 2 N^{1-2\theta}\norm{x}^2\right] d^{sq}x\lesssim  N^{sq\theta}\exp\left[-\frac c 2 N^{1-2\theta}\kappa^2\right]=o(1).
\end{align*}
Therefore, combining this part,  \eqref{eq:CompactDomain} and \eqref{eq:normalizingconstant}, taking the logarithm and dividing by $N^{1-2\theta}$, we have shown
\begin{align*}
 &\frac {1}{N^{1-2\theta}}\log\left( c_N  e^{- N\phi^*}\int_{U_N} \exp\left[-\frac {N^{1-2\theta}} 2 x^TJ_N(x)x+N^{1-2\theta}t^Tx\right]d^{sq}x\right)\\
 &=\frac {1}{N^{1-2\theta}}\log\left[cN^{(1/2-\theta)sq}\left(\exp\left[\frac{N^{1-2\theta}}2t^T(J_\infty^{-1}+o(1))t\right]\cdot I_N+o(1) \right)\right]\\
 &= \frac 1 2 t^T (\Gamma^{1-\theta} H_\phi (\xi^*) \Gamma^{1-\theta})^{-1}t+o(1).
\end{align*}
Consequently, we have shown that \eqref{eq:LimitMGFresult} holds.

The moment generating function of $\sqrt{N^{1-2\theta}}Z$ equals $\exp[\tfrac 12t^TN( (\mathcal S/N)^{1-\theta}\mathcal A (\mathcal S/N)^{1-\theta})^{-1}t]$, hence
\begin{align*}
    \frac 1 {N^{1-2\theta}}\log\IE\left(\exp[N^{1-2\theta}t^T Z]\right)=\tfrac 12t^TN( (\mathcal S/N)^{1-\theta}\mathcal A (\mathcal S/N)^{1-\theta})^{-1}t\to \tfrac 1 2 t^T (\Gamma^{1-\theta}\mathcal A\Gamma^{1-\theta})^{-1}t
\end{align*}
as $N\to \infty$. Thus we obtain 
\begin{align*}
\lim_{N\to\infty}\frac{1}{N^{1-2\theta}}\log \int \exp\left[ N^{1-2\theta }t^T \mathcal S^{\theta}(m-v)\right]d\IP=\frac 1 2 t^T \left(\Gamma^{\theta-1} (H_\phi (\xi^*)^{-1}-\mathcal A ^{-1}) \Gamma^{\theta-1}\right)t.
\end{align*}
Now, consider $t=\mathcal R ^T \tilde t$ and we arrive at
\begin{align*}
\lim_{N\to\infty}\frac{1}{N^{1-2\theta}}\log \int \exp\left[ N^{1-2\theta }\tilde t^T\mathcal R\mathcal S^{\theta}(m-v)\right]d\IP=\frac 1 2 \tilde t^T \mathcal R\left(\Gamma^{\theta-1} (H_\phi (\xi^*)^{-1}-\mathcal A ^{-1}) \Gamma^{\theta-1}\right)\mathcal R ^T \tilde t=:\Lambda^*(\tilde t).
\end{align*}
Taking the same route as in the proof of Theorem \ref{thm:CLTrot}, we obtain
\begin{align*}
    \mathcal R\left(\Gamma^{\theta-1} (H_\phi (\xi^*)^{-1}-\mathcal A ^{-1}) \Gamma^{\theta-1}\right)\mathcal R ^T=\left(q\operatorname{diag(\gamma)}^{1-2\theta}-\operatorname{diag(\gamma)}^{1-\theta}A \operatorname{diag(\gamma)}^{1-\theta}\right)^{-1}\otimes I_{q-1}
\end{align*}
as in \eqref{eq:rotatedcovariance}, where we omit the analogous details. In particular this matrix is positive definite and hence the quadratic form $\Lambda^*$ is an essentially smooth function. The Gärtner-Ellis Theorem \cite[Theorem 2.3.6]{DemboZeituoni} now provides the large deviation principle of $\mathcal S^{\theta}(m-N\mathcal S^{-1}\xi^*)$ with speed $N^{1-2\theta}$ and good rate function given by the Legendre transform $(\Lambda^*)^*=\Lambda$. This is given by the quadratic form of half the inverse matrix and we wrote $t$ instead $\tilde t$ in the statement of the theorem.
\end{proof}


\begin{thebibliography}{KLSS20}

\bibitem[BPS89]{B89}
Jerzy~K Baksalary, Friedrich Pukelsheim, and George~PH Styan.
\newblock Some properties of matrix partial orderings.
\newblock {\em Linear Algebra and its Applications}, 119:57--85, 1989.

\bibitem[BRS19]{BRS19}
Quentin Berthet, Philippe Rigollet, and Piyush Srivastava.
\newblock Exact recovery in the {I}sing blockmodel.
\newblock {\em Ann. Statist.}, 47(4):1805--1834, 2019.

\bibitem[CET05]{CET}
Marius Costeniuc, Richard~S Ellis, and Hugo Touchette.
\newblock Complete analysis of phase transitions and ensemble equivalence for
  the curie--weiss--potts model.
\newblock {\em Journal of Mathematical Physics}, 46(6):063301, 2005.

\bibitem[CGM08]{Contucci_et_al08}
Pierluigi Contucci, Ignacio Gallo, and Giulia Menconi.
\newblock Phase transitions in social sciences: Two-population mean field
  theory.
\newblock {\em International Journal of Modern Physics B}, 22(14):2199--2212,
  2008.

\bibitem[Col14]{collet_CW}
Francesca Collet.
\newblock Macroscopic limit of a bipartite {C}urie-{W}eiss model: a dynamical
  approach.
\newblock {\em J. Stat. Phys.}, 157(6):1301--1319, 2014.

\bibitem[DM20]{DebMuk}
Nabarun Deb and Sumit Mukherjee.
\newblock Fluctuations in mean-field ising models, 2020.

\bibitem[DZ09]{DemboZeituoni}
A.~Dembo and O.~Zeitouni.
\newblock {\em Large Deviations Techniques and Applications}.
\newblock Stochastic Modelling and Applied Probability. Springer Berlin
  Heidelberg, 2009.

\bibitem[EE83]{EE83}
Theodor Eisele and Richard~S Ellis.
\newblock Symmetry breaking and random waves for magnetic systems on a circle.
\newblock {\em Zeitschrift f{\"u}r Wahrscheinlichkeitstheorie und verwandte
  Gebiete}, 63(3):297--348, 1983.

\bibitem[EL04]{MDP}
Peter Eichelsbacher and Matthias L{\"o}we.
\newblock Moderate deviations for a class of mean-field models.
\newblock {\em Markov Processes And Related Fields}, 10(2), 2004.

\bibitem[Ell06]{EllisBook}
Richard~S Ellis.
\newblock {\em Entropy, large deviations, and statistical mechanics}, volume
  1431,.
\newblock Taylor \& Francis, 2006.

\bibitem[EW90]{EW90}
Richard~S Ellis and Kongming Wang.
\newblock Limit theorems for the empirical vector of the curie-weiss-potts
  model.
\newblock {\em Stochastic processes and their applications}, 35(1):59--79,
  1990.

\bibitem[FC11]{fedele_contucci}
Micaela Fedele and Pierluigi Contucci.
\newblock Scaling limits for multi-species statistical mechanics mean-field
  models.
\newblock {\em J. Stat. Phys.}, 144(6):1186--1205, 2011.

\bibitem[GBC09]{gallo_barra_contucci}
Ignacio Gallo, Adriano Barra, and Pierluigi Contucci.
\newblock Parameter evaluation of a simple mean-field model of social
  interaction.
\newblock {\em Math. Models Methods Appl. Sci.}, 19(suppl.):1427--1439, 2009.

\bibitem[GC08]{gallo_contucci_CW}
Ignacio Gallo and Pierluigi Contucci.
\newblock Bipartite mean field spin systems. {E}xistence and solution.
\newblock {\em Math. Phys. Electron. J.}, 14:Paper 1, 21, 2008.

\bibitem[HLP52]{Ineq}
G.H. Hardy, J.E. Littlewood, and G.~P{\'o}lya.
\newblock {\em Inequalities}.
\newblock Cambridge Mathematical Library. Cambridge University Press, 1952.

\bibitem[KLS20]{Prepreprint}
Holger Knöpfel, Matthias Löwe, and Holger Sambale.
\newblock Large deviations and a phase transition in the block spin potts
  models, 2020.

\bibitem[KLSS20]{KLSS20}
Holger Kn\"{o}pfel, Matthias L\"{o}we, Kristina Schubert, and Arthur Sinulis.
\newblock Fluctuation results for general block spin {I}sing models.
\newblock {\em J. Stat. Phys.}, 178(5):1175--1200, 2020.

\bibitem[KS89]{KeSchon89}
H.~Kesten and R.~H. Schonmann.
\newblock Behavior in large dimensions of the {P}otts and {H}eisenberg models.
\newblock {\em Rev. Math. Phys.}, 1(2-3):147--182, 1989.

\bibitem[KT20a]{KT20b}
Werner Kirsch and Gabor Toth.
\newblock Two groups in a {C}urie-{W}eiss model.
\newblock {\em Math. Phys. Anal. Geom.}, 23(2):Paper No. 17, 14, 2020.

\bibitem[KT20b]{KT20a}
Werner Kirsch and Gabor Toth.
\newblock Two groups in a {C}urie-{W}eiss model with heterogeneous coupling.
\newblock {\em J. Theoret. Probab.}, 33(4):2001--2026, 2020.

\bibitem[Liu20]{Liu}
Qun Liu.
\newblock Limit theorems for the bipartite potts model.
\newblock {\em Journal of Statistical Physics}, pages 1--23, 2020.

\bibitem[LM12]{LM12}
Matthias L\"{o}we and Raphael Meiners.
\newblock Moderate deviations for random field {C}urie-{W}eiss models.
\newblock {\em J. Stat. Phys.}, 149(4):701--721, 2012.

\bibitem[LS18]{LS18}
Matthias L{\"o}we and Kristina Schubert.
\newblock Fluctuations for block spin ising models.
\newblock {\em Electronic Communications in Probability}, 23, 2018.

\bibitem[LS20]{LS20}
Matthias L\"{o}we and Kristina Schubert.
\newblock Exact recovery in block spin {I}sing models at the critical line.
\newblock {\em Electron. J. Stat.}, 14(1):1796--1815, 2020.

\bibitem[LSV20]{LSV20}
Matthias L\"{o}we, Kristina Schubert, and Franck Vermet.
\newblock Multi-group binary choice with social interaction and a random
  communication structure---a random graph approach.
\newblock {\em Phys. A}, 556:124735, 17, 2020.

\bibitem[Roc70]{Ro70}
R.~Tyrrell Rockafellar.
\newblock {\em Convex analysis}.
\newblock Princeton Mathematical Series, No. 28. Princeton University Press,
  Princeton, N.J., 1970.

\end{thebibliography}

\end{document}